\begin{document}
%
%
\theoremstyle{plain}
\swapnumbers
    \newtheorem{thm}[figure]{Theorem}
    \newtheorem{prop}[figure]{Proposition}
    \newtheorem{lemma}[figure]{Lemma}
    \newtheorem{keylemma}[figure]{Key Lemma}
    \newtheorem{corollary}[figure]{Corollary}
        \newtheorem{fact}[figure]{Fact}
    \newtheorem{subsec}[figure]{}
    \newtheorem*{propa}{Proposition A}
    \newtheorem*{thma}{Theorem A}
    \newtheorem*{thmb}{Theorem B}
    \newtheorem*{thmc}{Theorem C}
    \newtheorem*{thmd}{Theorem D}
\theoremstyle{definition}
    \newtheorem{defn}[figure]{Definition}
    \newtheorem{examples}[figure]{Examples}
    \newtheorem{notn}[figure]{Notation}
    \newtheorem{summary}[figure]{Summary}
     \newtheorem{question}[figure]{Question}
\theoremstyle{remark}
        \newtheorem{remark}[figure]{Remark}
        \newtheorem{remarks}[figure]{Remarks}
        \newtheorem{example}[figure]{Example}
        \newtheorem{warning}[figure]{Warning}
    \newtheorem{assume}[figure]{Assumption}
    \newtheorem{ack}[figure]{Acknowledgements}
\renewcommand{\thefigure}{\arabic{section}.\arabic{figure}}
%
%
%
\newenvironment{myeq}[1][]
{\stepcounter{figure}\begin{equation}\tag{\thefigure}{#1}}
{\end{equation}}
\newcommand{\myeqn}[2][]
{\stepcounter{figure}\begin{equation}
     \tag{\thefigure}{#1}\vcenter{#2}\end{equation}}
\newcommand{\mydiag}[2][]{\myeq[#1]\xymatrix{#2}}
\newcommand{\mydiagram}[2][]
{\stepcounter{figure}\begin{equation}
     \tag{\thefigure}{#1}\vcenter{\xymatrix{#2}}\end{equation}}
\newcommand{\myssdiag}[2][]
{\stepcounter{figure}\begin{equation}
     \tag{\thefigure}{#1}\vcenter{\xymatrix@R=10pt@C=45pt{#2}}\end{equation}}
\newcommand{\mysdiag}[2][]
{\stepcounter{figure}\begin{equation}
     \tag{\thefigure}{#1}\vcenter{\xymatrix@R=13pt@C=40pt{#2}}\end{equation}}
\newcommand{\mytdiag}[2][]
{\stepcounter{figure}\begin{equation}
     \tag{\thefigure}{#1}\vcenter{\xymatrix@R=1pt@C=4pt{#2}}\end{equation}}
\newcommand{\myrdiag}[2][]
{\stepcounter{figure}\begin{equation}
     \tag{\thefigure}{#1}\vcenter{\xymatrix@R=13pt@C=12pt{#2}}\end{equation}}
\newcommand{\myqdiag}[2][]
{\stepcounter{figure}\begin{equation}
     \tag{\thefigure}{#1}\vcenter{\xymatrix@R=16pt@C=0pt{#2}}\end{equation}}
\newcommand{\myfigure}[2][]
{\stepcounter{figure}\begin{equation}
     \tag{\thefigure}{#1}\vcenter{#2}\end{equation}}
\newcommand{\mywdiag}[2][]
{\stepcounter{figure}\begin{equation}
     \tag{\thefigure}{#1}\vcenter{\xymatrix@R=20pt@C=8pt{#2}}\end{equation}}
\newcommand{\myzdiag}[2][]
{\stepcounter{figure}\begin{equation}
     \tag{\thethm}{#1}\vcenter{\xymatrix@R=5pt@C=20pt{#2}}\end{equation}}
%
\newenvironment{mysubsection}[2][]
{\begin{subsec}\begin{upshape}\begin{bfseries}{#2.}
\end{bfseries}{#1}}
{\end{upshape}\end{subsec}}
\newenvironment{mysubsect}[2][]
{\begin{subsec}\begin{upshape}\begin{bfseries}{#2\vsn.}
\end{bfseries}{#1}}
{\end{upshape}\end{subsec}}
\newcommand{\supsect}[2]
{\vspace*{-5mm}\quad\\\begin{center}\textbf{{#1}}\vsm.~~~~\textbf{{#2}}\end{center}}
\newcommand{\sect}{\setcounter{figure}{0}\section}
%
%
\newcommand{\wh}{\ -- \ }
\newcommand{\wwh}{-- \ }
\newcommand{\w}[2][ ]{\ \ensuremath{#2}{#1}\ }
\newcommand{\ww}[1]{\ \ensuremath{#1}}
\newcommand{\www}[2][ ]{\ensuremath{#2}{#1}\ }
\newcommand{\wwb}[1]{\ \ensuremath{(#1)}-}
\newcommand{\wb}[2][ ]{\ (\ensuremath{#2}){#1}\ }
\newcommand{\wref}[2][ ]{\ (\ref{#2}){#1}\ }
\newcommand{\wwref}[3][ ]{\ (\ref{#2})-(\ref{#3}){#1}\ }
%
%
\newcommand{\hs}{\hspace*{5 mm}}
\newcommand{\hsl}{\hspace*{-3 mm}}
\newcommand{\hsn}{\hspace*{1 mm}}
\newcommand{\hsm}{\hspace*{2 mm}}
\newcommand{\hsp}{\hspace*{9 mm}}
\newcommand{\hsq}{\hspace*{13 mm}}
\newcommand{\vsn}{\vspace{2 mm}}
\newcommand{\vs}{\vspace{5 mm}}
\newcommand{\vsm}{\vspace{3 mm}}
\newcommand{\vsp}{\vspace{9 mm}}
%
%
\newcommand{\hra}{\hookrightarrow}
\newcommand{\xra}[1]{\xrightarrow{#1}}
\newcommand{\xla}[1]{\xleftarrow{#1}}
\newcommand{\xepic}[1]{\xrightarrow{#1}\hspace{-5 mm}\to}
\newcommand{\lora}{\longrightarrow}
\newcommand{\mapsfrom}{\reflectbox{\mbox{$\mapsto$}}}
\newcommand{\lra}[1]{\langle{#1}\rangle}
\newcommand{\llrra}[1]{\langle\langle{#1}\rangle\rangle}
\newcommand{\llrr}[2]{\llrra{#1}\sb{#2}}
\newcommand{\llrrp}[2]{\llrra{#1}'\sb{#2}}
\newcommand{\lrf}{\langle\langle f\lo{0,1}\rangle\rangle}
\newcommand{\lrfn}[1]{\lrf\sb{#1}}
\newcommand{\lras}[1]{\langle{#1}\rangle\sb{\ast}}
\newcommand{\lrau}[1]{\langle{#1}\rangle\sp{\ast}}
\newcommand{\vlam}{\vec{\lambda}}
\newcommand{\epic}{\to\hspace{-3.5 mm}\to}
\newcommand{\xhra}[1]{\overset{#1}{\hookrightarrow}}
\newcommand{\efp}{\to\hspace{-1.5 mm}\rule{0.1mm}{2.2mm}\hspace{1.2mm}}
\newcommand{\efpic}{\mbox{$\to\hspace{-3.5 mm}\efp$}}
\newcommand{\up}[1]{\sp{(#1)}}
\newcommand{\bup}[1]{\sp{[{#1}]}}
\newcommand{\lo}[1]{\sb{(#1)}}
\newcommand{\lolr}[1]{\sb{\lra{#1}}}
\newcommand{\bp}[1]{\sb{[#1]}}
\newcommand{\hfsm}[2]{{#1}\ltimes{#2}}
\newcommand{\sms}[2]{{#1}\wedge{#2}}
\newcommand{\rest}[1]{\lvert\sb{#1}}
\newcommand{\ostar}[2]{{#1}\otimes{#2\sb{\ast}}}
\newcommand{\ii}{\wwb{\infty,1}}
\newcommand{\adj}[2]{\substack{{#1}\\ \rightleftharpoons \\ {#2}}}
%

%
%
\newcommand{\ab}{\operatorname{ab}}
\newcommand{\Ab}{\operatorname{Ab}}
\newcommand{\Aut}{\operatorname{Aut}}
\newcommand{\Cof}[1]{\operatorname{Cof}(#1)}
\newcommand{\Coker}{\operatorname{Coker}}
\newcommand{\colim}{\operatorname{colim}}
\newcommand{\colimit}[1]
{\raisebox{-1.7ex}{$\stackrel{\textstyle\colim}{\scriptstyle{#1}}$}}
\newcommand{\comp}{\mbox{\sf comp}}
\newcommand{\Cone}{\operatorname{Cone}}
\newcommand{\csk}[1]{\operatorname{csk}\sp{#1}}
\newcommand{\dd}{\operatorname{d}}
\newcommand{\Dec}{\operatorname{Dec}}
\newcommand{\diag}{\operatorname{diag}}
\newcommand{\ev}{\operatorname{ev}}
\newcommand{\exc}{\operatorname{ex}}
\newcommand{\Ext}{\operatorname{Ext}}
\newcommand{\Fib}{\operatorname{Fib}}
\newcommand{\fin}{\operatorname{fin}}
\newcommand{\fwd}{\operatorname{fwd}}
\newcommand{\gr}{\operatorname{gr}}
\newcommand{\ho}{\operatorname{ho}}
\newcommand{\hocofib}{\operatorname{hocofib}}
\newcommand{\hocolim}{\operatorname{hocolim}}
\newcommand{\holim}{\operatorname{holim}}
\newcommand{\Hom}{\operatorname{Hom}}
\newcommand{\inc}{\operatorname{inc}}
\newcommand{\Id}{\operatorname{Id}}
\newcommand{\Image}{\operatorname{Im}}
\newcommand{\init}{\operatorname{init}}
\newcommand{\vi}{v\sb{\init}}
\newcommand{\Ker}{\operatorname{Ker}}
\newcommand{\md}{\operatorname{mid}}
\newcommand{\Obj}[1]{\operatorname{Obj}\,{#1}}
\newcommand{\op}{\sp{\operatorname{op}}}
\newcommand{\pt}{\operatorname{pt}}
\newcommand{\red}{\operatorname{red}}
\newcommand{\sgn}[1]{\operatorname{sgn}({#1})}
\newcommand{\gsn}[1]{\operatorname{gsn}({#1})}
\newcommand{\sk}[1]{\operatorname{sk}\sb{#1}}
\newcommand{\SL}[1]{\operatorname{SL}\sb{#1}(\ZZ)}
\newcommand{\Sq}[1]{\operatorname{Sq}\sp{#1}}
\newcommand{\Tot}{\operatorname{Tot}}
\newcommand{\uTot}{\underline{\Tot}}
%
%
\newcommand{\map}{\operatorname{map}}
\newcommand{\mapa}{\map\sb{\ast}}
%
%
\newcommand{\Ei}[3]{E\sb{#1}\sp{{#2},{#3}}}
\newcommand{\Eis}[2]{E\sb{#1}(#2)}
\newcommand{\Eu}[3]{E\sp{#1}\sb{{#2},{#3}}}
\newcommand{\Eus}[1]{E\sp{#1}}
\newcommand{\Euz}[1]{E\sp{0}({#1})}
\newcommand{\Eot}[2]{\Eu{1}{#1}{#2}}
\newcommand{\Ett}[2]{\Eu{2}{#1}{#2}}
%
%
\newcommand{\cM}[1]{C\sb{#1}}
\newcommand{\cZ}[1]{Z\sb{#1}}
\newcommand{\dif}[1]{\partial\sb{#1}}
\newcommand{\oG}[1]{\overline{G}\sb{#1}}
%
%
\newcommand{\A}{\mathcal{A}}
\newcommand{\tA}{\widetilde{A}}
\newcommand{\hA}{\widehat{A}}
\newcommand{\halpha}{\widehat{\alpha}}
\newcommand{\hhalpha}{\widehat{\halpha}}
\newcommand{\B}{\mathcal{B}}
\newcommand{\hB}{\widehat{B}}
\newcommand{\C}{\mathcal{C}}
\newcommand{\hC}{\widehat{\C}}
\newcommand{\mC}{\mathscr{C}}
\newcommand{\D}{\mathcal{D}}
\newcommand{\hD}{\widehat{D}}
\newcommand{\tD}{\widetilde{D}}
\newcommand{\E}{\mathcal{E}}
\newcommand{\hE}{\widehat{E}}
\newcommand{\cF}[1]{\mathcal{F}\sp{#1}}
\newcommand{\Fc}[1]{\mathcal{F}\sb{#1}}
\newcommand{\hf}{\widehat{f}}
\newcommand{\hF}{\widehat{F}}
\newcommand{\G}{\mathcal{G}}
\newcommand{\hg}{\widehat{g}}
\newcommand{\hh}{\widehat{h}}
\newcommand{\uI}{\underline{I}}
\newcommand{\cII}{\mathcal{I}}
\newcommand{\cI}[2]{\cII\sp{#1}\sb{#2}}
\newcommand{\wI}[2]{\widehat{\cII}\sp{#1}\sb{#2}}
\newcommand{\cJJ}{\mathcal{J}}
\newcommand{\hI}[2]{\cJJ\sp{#1}\sb{#2}}
\newcommand{\hIp}{\widehat{I'}}
\newcommand{\hJ}{\widehat{J}}
\newcommand{\hhJp}{\widehat{\widehat{J'}}}
\newcommand{\uJ}{\underline{J}}
\newcommand{\hK}{\widehat{K}}
\newcommand{\hhK}{\widehat{\widehat{K}}}
\newcommand{\uK}{\underline{K}}
\newcommand{\KK}[1]{{\mathcal{K}}\sb{#1}}
\newcommand{\hL}{\widehat{L}}
\newcommand{\cL}{{\mathcal L}}
\newcommand{\Li}{\cL\sb{\infty}}
\newcommand{\M}{\mathcal{M}}
\newcommand{\hM}{\widehat{M}}
\newcommand{\hhM}{\widehat{\widehat{M}}}
\newcommand{\uM}{\underline{M}}
\newcommand{\eM}{{\EuScript M}}
\newcommand{\hN}{\widehat{N}}
\newcommand{\hhN}{\widehat{\widehat{N}}}
\newcommand{\uN}{\underline{N}}
\newcommand{\MA}{\Map\sb{\A}}
\newcommand{\MAB}{\Map\sb{\A}\sp{\B}}
\newcommand{\MB}{\Map\sp{\B}}
\newcommand{\eN}{{\EuScript N}}
\newcommand{\NA}{\eN\sb{A}}
\newcommand{\OO}{\mathcal{O}}
\newcommand{\homeg}{\widehat{\omega}}
\newcommand{\hhomeg}{\widehat{\homeg}}
\newcommand{\cP}[1]{\mathcal{P}\sp{#1}}
\newcommand{\hP}{\widehat{P}}
\newcommand{\hhP}{\widehat{\widehat{P}}}
\newcommand{\uP}{\underline{P}}
\newcommand{\hR}{\widehat{R}}
\newcommand{\hhR}{\widehat{\widehat{R}}}
\newcommand{\uR}{\underline{R}}
\newcommand{\eS}{{\EuScript S}}
\newcommand{\Ss}{\mathcal{S}}
\newcommand{\Sa}{\Ss\sb{\ast}}
\newcommand{\Sr}{\Ss\sp{\red}}
\newcommand{\hT}{\widehat{T}}
\newcommand{\hhT}{\widehat{\widehat{T}}}
\newcommand{\uT}{\underline{T}}
\newcommand{\U}{\mathcal{U}}
\newcommand{\eW}{{\EuScript W}}
\newcommand{\mX}{\mathscr{X}}
\newcommand{\eY}{{\EuScript Y}}
\newcommand{\eZ}{{\EuScript Z}}
%
%
\newcommand{\hy}[2]{{#1}\text{-}{#2}}
\newcommand{\Alg}[1]{{#1}\text{-}{\mbox{\sf Alg}}}
\newcommand{\Pa}[1][ ]{$\Pi$-algebra{#1}}
\newcommand{\PAlg}{\Alg{\Pi}}
\newcommand{\pis}{\pi\sb{\ast}}
\newcommand{\gS}[1]{{\EuScript S}\sp{#1}}
\newcommand{\Mod}[1]{{#1}\text{-}{\mbox{\sf Mod}}}
\newcommand{\us}{u\sp{\ast}}
\newcommand{\Set}{\mbox{\sf Set}}
\newcommand{\Seta}{\Set\sb{\ast}}
\newcommand{\Cat}{\mbox{\sf Cat}}
\newcommand{\DK}{\mbox{\sf DK}}
\newcommand{\Grp}{\mbox{\sf Gp}}
\newcommand{\Map}{\mbox{\sf Map}}
\newcommand{\OC}{\hy{\OO}{\Cat}}
\newcommand{\SC}{\hy{\Ss}{\Cat}}
\newcommand{\SaC}{\hy{\Sa}{\Cat}}
\newcommand{\SO}{(\Ss,\OO)}
\newcommand{\SaO}{(\Sa,\OO)}
\newcommand{\SOC}{\hy{\SO}{\Cat}}
\newcommand{\SaOC}{\hy{\SaO}{\Cat}}
\newcommand{\Top}{\mbox{\sf Top}}
\newcommand{\Tz}{\Top\sb{0}}
\newcommand{\Topa}{\Top\sb{\ast}}
%
%
\newcommand{\CC}{\mathbb C}
\newcommand{\CP}[1]{\CC\mathbf{P}\sp{#1}}
\newcommand{\FF}{\mathbb F}
\newcommand{\LL}{\mathbb L}
\newcommand{\NN}{\mathbb N}
\newcommand{\QQ}{\mathbb Q}
\newcommand{\RR}{\mathbb R}
\newcommand{\ZZ}{\mathbb Z}
\newcommand{\Zp}{\ZZ\lo{p}}
\newcommand{\Zt}{\ZZ\lo{2}}
%
%
\newcommand{\valph}{\vec{\alpha}}
\newcommand{\Del}{\mathbf{\Delta}}
\newcommand{\Deln}[1]{\Del\sp{#1}}
\newcommand{\Delnk}[2]{\Deln{#1}\lo{#2}}
\newcommand{\Dop}{\Delta\op}
\newcommand{\res}{\operatorname{res}}
\newcommand{\Dres}{\Delta\sb{\res}}
\newcommand{\Drop}{\Dres\op}
\newcommand{\Dp}{\Delta\sb{+}}
\newcommand{\Dresp}{\Delta\sb{\res,+}}
\newcommand{\Dn}[1]{\Delta\lolr{#1}}
\newcommand{\Dnop}[1]{\Dn{#1}\op}
\newcommand{\Drn}[1]{\Delta\lolr{#1}}
\newcommand{\Dron}[1]{\Delta\lolr{#1}\op}
\newcommand{\Drnp}[1]{\Delta\lolr{#1+}}
\newcommand{\Dronp}[1]{\Delta\lolr{#1+}\op}
\newcommand{\Du}{\Del\sp{\bullet}}
%
%
\newcommand{\bA}{{\mathbf A}}
\newcommand{\bB}{{\mathbf B}}
\newcommand{\bC}{{\mathbf C}}
\newcommand{\bD}{{\mathbf D}}
\newcommand{\cD}{D}
\newcommand{\bE}{{\mathbf E}}
\newcommand{\be}[1]{{\mathbf e}\sp{#1}}
\newcommand{\bF}{{\mathbf F}}
\newcommand{\Fv}[1]{\bF\bp{#1}}
\newcommand{\Fk}[1]{F\sp{#1}}
\newcommand{\Fpp}{\hspace{0.5mm}'\hspace{-0.7mm}F}
\newcommand{\Fpk}[1]{\hspace{0.5mm}'\hspace{-0.7mm}F\sp{#1}}
\newcommand{\Fn}[2]{F\sp{#1}\bp{#2}}
\newcommand{\bG}{{\mathbf G}}
\newcommand{\Gv}[1]{\bG\bp{#1}}
\newcommand{\hGv}[1]{\widehat{\bG}\bp{#1}}
\newcommand{\Gn}[2]{G\sp{#1}\bp{#2}}
\newcommand{\hGn}[2]{\widehat{G}\sp{#1}\bp{#2}}
\newcommand{\tg}[1]{\widetilde{g}\sp{#1}}
\newcommand{\bH}{{\mathbf H}}
\newcommand{\wH}{\widehat{H}}
\newcommand{\Hv}[1]{\bH\bp{#1}}
\newcommand{\hHv}[1]{\wH\bp{#1}}
\newcommand{\Hn}[2]{H\sp{#1}\bp{#2}}
\newcommand{\tH}[1]{\widetilde{H}\sp{#1}}
\newcommand{\tHn}[2]{\tH{#1}\bp{#2}}
\newcommand{\bi}{{\mathbf i}}
\newcommand{\bj}{{\mathbf j}}
\newcommand{\bK}{{\mathbf K}}
\newcommand{\bL}{{\mathbf L}}
\newcommand{\KP}[2]{\bK({#1},{#2})}
\newcommand{\KR}[1]{\KP{R}{#1}}
\newcommand{\KZ}[1]{\KP{\ZZ}{#1}}
\newcommand{\bM}[1]{{\mathbf M}\sp{#1}}
\newcommand{\bP}{{\mathbf P}}
\newcommand{\bQ}{{\mathbf Q}}
\newcommand{\bS}[1]{{\mathbf S}\sp{#1}}
\newcommand{\vS}{\mathcal S}
\newcommand{\bT}{\mathbf{\Theta}}
\newcommand{\TA}{\bT\sp{\A}}
\newcommand{\TB}{\bT\sb{\B}}
\newcommand{\ThB}{\Theta\sb{\B}}
\newcommand{\TAB}{\bT\sp{\A}\sb{\B}}
\newcommand{\TR}{\Theta\sb{R}}
\newcommand{\TRl}{\Theta\sb{R}\sp{\lambda}}
\newcommand{\bU}{{\mathbf U}}
\newcommand{\bV}{{\mathbf V}}
\newcommand{\bW}{{\mathbf W}}
\newcommand{\bX}{{\mathbf X}}
\newcommand{\hX}{\widehat{\bX}}
\newcommand{\tX}{\widetilde{\bX}}
\newcommand{\bY}{{\mathbf Y}}
\newcommand{\hY}{\widehat{\bY}}
\newcommand{\bZ}{{\mathbf Z}}
\newcommand{\uZ}[1]{\bZ\up{#1}}
\newcommand{\hZ}{\widehat{\bZ}}
%
%
\newcommand{\bdel}{\bar{\delta}}
\newcommand{\eps}[1]{\epsilon\sb{#1}}
\newcommand{\iol}[1]{\iota\sb{#1}}
\newcommand{\iot}[1]{\iota\sp{#1}}
\newcommand{\iotl}[2]{\iota\sp{#1}\lo{#2}}
\newcommand{\iottl}[2]{\iota\sp{#1}\sb{#2}}
\newcommand{\vare}{\varepsilon}
\newcommand{\var}[1]{\vare\sb{#1}}
\newcommand{\tvar}[1]{\widetilde{\vare}\bup{#1}}
\newcommand{\ett}[1]{\eta\sb{#1}}
\newcommand{\veta}{\vec{\eta}}
\newcommand{\blam}{\bar{\lambda}}
\newcommand{\wvarp}{\widehat{\varphi}}
\newcommand{\wrho}{\widehat{\rho  }}
\newcommand{\tS}{\widetilde{\Sigma}}
\newcommand{\wsig}{\widehat{\sigma}}
\newcommand{\gam}[1]{\gamma\lo{#1}}
\newcommand{\gamm}[1]{\gamma\sb{#1}}
%
%
%
\newcommand{\bd}{\mathbf{d}\sb{0}}
\newcommand{\co}[1]{c({#1})\sb{\bullet}}
\newcommand{\cod}[1]{c\sb{+}({#1})\sb{\bullet}}
\newcommand{\cu}[1]{c({#1})\sp{\bullet}}
\newcommand{\szero}[1]{\sigma\sp{\ast}(#1)}
\newcommand{\Cd}[1]{\bC[{#1}]\sb{\bullet}}
\newcommand{\Fd}{\mathcal{F}\sb{\bullet}}
\newcommand{\Gd}{G\sb{\bullet}}
\newcommand{\Gu}{G\sp{\bullet}}
\newcommand{\oT}[1]{\overline{T}\sb{#1}}
\newcommand{\Ud}{U\sb{\bullet}}
\newcommand{\wUd}{\widetilde{U}\sb{\bullet}}
\newcommand{\Vd}{V\sb{\bullet}}
\newcommand{\bVd}{\bV\sb{\bullet}}
\newcommand{\oV}[1]{\overline{\bV}\sb{#1}}
\newcommand{\Wd}{\bW\sb{\bullet}}
\newcommand{\rWd}{W\sb{\bullet}}
\newcommand{\wW}{\widehat{\bW}}
\newcommand{\wWd}{\wW\sb{\bullet}}
\newcommand{\tW}{\widetilde{\bW}}
\newcommand{\tWd}{\tW\sb{\bullet}}
\newcommand{\oW}[1]{\overline{\bW}\sb{#1}}
\newcommand{\Xd}{X\sb{\bullet}}
\newcommand{\hXd}{\hX\sb{\bullet}}
\newcommand{\Yd}{\bY\sb{\bullet}}
\newcommand{\Zd}{\bZ\sb{\bullet}}
\newcommand{\Zud}[1]{\uZ{#1}\sb{\bullet}}
\newcommand{\hZd}{\hZ\sb{\bullet}}
\newcommand{\bk}{[\mathbf{k}]}
\newcommand{\bkm}{[\mathbf{k}-\mathbf{1}]}
\newcommand{\bmm}{[\mathbf{m}]}
\newcommand{\bnp}{[\mathbf{n}+\mathbf{1}]}
\newcommand{\bn}{[\mathbf{n}]}
\newcommand{\bnm}{[\mathbf{n}-\mathbf{1}]}
\newcommand{\bone}{[\mathbf{1}]}
\newcommand{\bze}{[\mathbf{0}]}
\newcommand{\bmo}{[-\mathbf{1}]}
\newcommand{\od}{\overline{\partial}}
\newcommand{\odz}[1]{\od\sb{#1}}
%
%
\newcommand{\PP}[1]{\mathcal{P}\sp{#1}}
%
%
\title{The algebra of higher homotopy operations}
\author{Samik Basu, David Blanc, and Debasis Sen}
\address{Stat-Math Unit\\ Indian Statistical Institute\\ Kolkata 700108\\ India}
\email{samikbasu@isical.ac.in}
\address{Department of Mathematics\\ Faculty of Natural Sciences\\ University of Haifa\\
PO Box 3338\\ 3498838 Haifa\\ Israel}
\email{blanc@math.haifa.ac.il}
\address{Department of Mathematics \& Statistics\\
Indian Institute of Technology, Kanpur\\ Uttar Pradesh 208016\\ India}
\email{debasis@iitk.ac.in}

\date{\today}

\makeatletter
\@namedef{subjclassname@2020}{%
  \textup{2020} Mathematics Subject Classification}
\makeatother
\subjclass[2020]{Primary: 55Q35; \ secondary: 55Q40, 55Q15, 55S30}
\keywords{Higher homotopy operations, Toda brackets, simplicial spaces,
homotopy groups of spheres, higher Whitehead products}

\begin{abstract}
We explain how the simplicial higher order unstable homotopy operations
defined in \cite{BBSenHS} may be composed and inserted one in another,
thus forming a coherent if complicated algebraic structure.
\end{abstract}

\maketitle

\setcounter{section}{0}

%
%
\section*{Introduction}
\label{cint}

Secondary and higher order operations have played an important role in homotopy theory
and other areas of mathematics since Massey products and Toda brackets first appeared
in \cite{MassN} and \cite{TodG}, respectively. Many generalizations of these two examples
have been proposed over the years (see, e.g.,
\cite{SpanH,MaunC,GWalkL,BMarkH,BBGondH,BJTurnC,BBSenT}), but we shall
concentrate here on the simplicial version considered in \cite{BBSenHS}, which
were shown there to generate all elements in the unstable homotopy groups of spheres
(starting with a small set of indecomposables), in the same way that higher Toda
brackets generate the stable homotopy groups from the Hopf maps (as Joel Cohen showed
in \cite{JCohDS}).

In both cases, the sense in which the higher operations \emph{generate} all homotopy
classes was left somewhat opaque, and our goal here is to clarify this point by
defining an \emph{algebra} of higher homotopy operations, in which they are shown to
interact in certain specified ways, beyond the usual primary composition structure.

We start with the definition of the simplicial higher order
homotopy operations from \cite[\S 1]{BBSenHS}, stated in the language of
$\infty$-categories, along the lines indicated in \cite{BMeadS}.
Even though here we are concerned only with higher homotopy operations in the
strict sense \wh that is, acting on the homotopy groups of pointed topological
spaces \wh much of what we do is applicable in wider settings, where we may not
even have a model category structure available.

The interpretation in terms of spectral sequences helps to clarify the relation between
the lower level indeterminacy (in terms of inductive choices of higher homotopies),
and the higher level indeterminacy in terms of the algebraic resolutions or cell
structure (see \cite[\S 3]{BJTurnHH} and compare \cite[\S 3]{BSenM}).

We do not claim these simplicial higher operations are the most general possible \wh
see \cite{BMarkH,BJTurnC} for other options.  Nevertheless, they appear to be
sufficient for the following purposes:

\begin{enumerate}
\renewcommand{\labelenumi}{(\alph{enumi})~}
\item If \w{f:\bS{n}\to\bX} is a value of a higher order operation, we show how addition,
  post- or pre-composition with $f$, and insertion of $f$ in another operation,
  can be expressed as the value of another such operation.
\item We explain how (iterated) Whitehead products can be expressed quite explicitly
  as values of higher operations (in a variety of ways); this is relatively formal.
\item Finally, we use Hilton's Theorem on the general form of the \emph{primary}
  homotopy operations to provide a ``higher Hilton decomposition'' of a general
  higher order operation (using the formal description of the Whitehead product
  component).
\end{enumerate}

As an application of our methods we study the only known regular sequence of
higher order operations: namely, the higher order Whitehead products.
We were able to obtain a closed formula (using our simplicial approach) only for
the rational version, but we conjecture that it may be valid integrally.  Finally,
we indicate how the more general Lie-Massey products may be treated by our methods.

\begin{notn}\label{snac}
Let $\Delta$ denote the category of non-empty finite ordered sets and order-preserving
maps (cf.\ \cite[\S 2]{MayS}), \w{\Dres} the subcategory with the same
objects but only monic maps, and \w{\Drn{n}} the subcategory of \w{\Dres} consisting of
non-empty ordered sets with at most \w{n+1} elements; we denote the ordered set
\w{0<1<\dotsc<n} with \w{n+1} elements by \w[.]{\bn} Similarly, \w{\Dp} is the
category of \emph{all} finite ordered sets, with the empty set denoted by \w[.]{\bmo}
Write \w{\Dresp} for the corresponding subcategory of monic maps, and \w{\Drnp{n}} for
the subcategory of \w{\Dresp} consisting of ordered sets with at most \w{n+1} elements.

A \emph{simplicial object} \w{\Vd} in a category $\C$ is a functor
\w{\Dop\to\C} and an \emph{augmented simplicial object} is a functor
\w[,]{\Dp\op\to\C} usually written \w{\Vd\to X} (where $X$ is the value at
\w[).]{\bmo}

A \emph{restricted} simplicial object
is a functor \w[,]{\Dres\op\to\C} and a \emph{restricted augmented} simplicial object
is a functor \w[;]{\Dresp\op\to\C} in both cases, we have face maps but no degeneracies.
Similarly, an \emph{$n$-truncated restricted} simplicial object is a functor
\w[,]{\Dron{n}\to\C} and an \emph{$n$-truncated restricted augmented} simplicial object
is a functor \w[.]{\Dronp{n}\to\C} There is a natural embedding
\w[,]{\co{-}:\C\to\C\sp{\Dop}} with \w{\co{A}} the constant simplicial object, and
for each \w[,]{n\geq 0} a truncation functor
\w{\tau\sb{n}:\C\sp{\Dresp\op} \to \C\sp{\Dronp{n}}} (and similarly for the unrestricted
or augmented versions).

In each case, \w{V\sb{n}} is the value of \w{\Vd} at \w{\bn}
and \w{d\sb{i}:V\sb{n}\to V\sb{n-1}} for the $i$-th face map (corresponding
to the map \w{\bnm\to\bn} which skips $i$ in the target). For \w{\Vd\to X}
an augmented object, we write \w{\vare:V\sb{0}\to X} for the augmentation (corresponding
to the unique map \w{\bmo\to\bze} in \w[).]{\Dp}
The inclusion \w{\Delta \to \Dp} induces \w{\C\sp{\Dp\op} \to \C\sp{\Dop}} (forgetting
the augmentation), with right adjoint the d\'{e}calage
\w{\Dec:\C\sp{\Dop}\to\C\sp{\Dp\op}} (omitting the last face and degeneracy maps
in each dimension \w{n\geq 0} and decreasing $n$ by one).

There is a similar right adjoint \w{\Dec\sb{\res}:\C\sp{\Dres\op}\to\C\sp{\Dresp\op}}
for restricted simplicial objects. When $\C$ is pointed, we have a functor
\w[,]{T:\C\sp{\Dresp\op} \to \C\sp{\Dres\op}} right inverse to \w[,]{\Dec\sb{\res}}
which adds an extra (zero) face map \w{d\sb{n+1}:V\sb{n}\to V\sb{n-1}}
in each dimension \w{n\geq -1} and increases $n$ by one.

The category of topological spaces will be denoted by \w[,]{\Top}
that of pointed spaces by \w[,]{\Topa} and that of pointed connected
spaces by \w[.]{\Tz} The category of simplicial sets will be
denoted by \w[,]{\Ss=\Set\sp{\Dop}} that of pointed simplicial
sets by \w[,]{\Sa=\Seta\sp{\Dop}} that of reduced simplicial
sets by \w{\Sr} (see \cite[III, \S 3]{GJarS}), and that of simplicial groups
by \w[,]{\G=\Grp\sp{\Dop}} with \w{G:\Sa\to\G} Kan's loop functor
(see \cite[V \S 5]{GJarS}), and \w{\U:\G\to\Sa} the forgetful functor (so \w{\U GK}
is a model for the loop space on $K$). We write \w{\mapa(\bX,\bY)} for the standard
function complex in \w[,]{\Sa} \w[,]{\Tz} or $\G$ (see \cite[I, \S 1.5]{GJarS}).

For any category $\C$ with enough colimits, \w{\C\op} is simplicial (that is
(co)tensored over $\Ss$) in the sense of
\cite[II, \S 1]{QuiH}: in particular, for each \w{K\in\Ss} and \w{\Xd\in\C\op}
the object \w{\Yd:=\Xd\otimes K\in\C\op} has
\w[.]{\bY\sb{n}\cong\coprod\sb{\sigma\in K\sb{n}}\bX\sb{n}} If $\C$ is pointed
and \w[,]{K\in\Sa} the pointed version \w{\ostar{\Xd}{K}} omits \w{\ast\in K\sb{n}}
for each $n$.
\end{notn}

\begin{mysubsection}{Organization}
\label{sorg}
In Section \ref{choo} we recall our earlier definition of (simplicial) higher
order operations in a model-free $\infty$-categorical approach, and in Section
\ref{csr} we recall some additional facts about simplicial resolutions needed
for our constructions.  In Section \ref{chho} we restrict attention to higher
\emph{homotopy} operations (based on maps out of spheres), and describe
the basic procedure for composition of their values.  Section \ref{cwp} deals with
Whitehead products, which are needed for our analysis of Hilton's Theorem in
Section \ref{cahho}, providing a meaning to the notion of an \emph{algebra} of
higher homotopy operations. Section \ref{chowp} deals with the example of the
higher order Whitehead products, while Section \ref{clmp} explains what changes are
needed to deal with the more general Lie-Massey products. Finally, Section \ref{ccpn}
relates our earlier result from \cite[\S 8]{BBSenHS} on complex projective spaces
to the present Section \ref{chowp}.
\end{mysubsection}

%
%
\sect{Higher order operations}
\label{choo}

We start with our definition of simplicial higher order homotopy operations
from \cite{BBSenHS}, stated in a model-independent version of $\infty$-categories:

\begin{mysubsection}{Models of \ww{(\infty,1)}-categories}
\label{sinfcat}
There are a number of versions of \ww{(\infty,1)}-categories, each with its own model
structure, including simplicial categories (see \cite{BergM}), quasi-categories
(see \cite{JoyQ,LurH}), Segal categories (see \cite{SimpH}), and complete Segal
spaces (see \cite{RezkM}). All of these are Quillen equivalent, and each has its
own technical advantages and difficulties (see the survey in \cite{BergnHI}).

However, for our purposes here we need very little of the theory, and can essentially
work with an approach which is independent of the model chosen.
There are in fact several axiomatic formulations of the abstract notion of a
\emph{model} of \ww{(\infty,1)}-category theory (see \cite{ToenA,RVeriI}),
but we shall only need the following:
\begin{enumerate}
\renewcommand{\labelenumi}{(\alph{enumi})~}
\item Our model consists of a category $\M$ (e.g., the category $\Ss$ of simplicial sets,
or the category \w{s\Cat} of small simplicial categories), with a full subcategory
\w{\M\sb{0}} of \emph{$\infty$-categories} (e.g., quasi-categories, or fibrant
simplicial categories).
\item A \emph{homotopy category} functor \w[,]{\ho:\M\sb{0}\to\Cat} with the \emph{nerve}
functor \w{\B:\Cat\to\M\sb{0}} as right adjoint.
\item An \emph{object set} functor \w[;]{\Obj{}:\M\to\Set}
\item For each \w{\C\in\M\sb{0}} and \w[,]{x,y\in\Obj{\C}} a Kan complex
  \w[,]{\Map\sb{\C}(x,y)} equipped with a natural isomorphism
  \w[.]{\pi\sb{0}\Map\sb{\C}(x,y)\cong\Hom\sb{\ho\C}(x,y)}
\end{enumerate}

In addition, we assume:
\begin{enumerate}
\renewcommand{\labelenumi}{\arabic{enumi}.~}
\item Every \w{\C\in\M\sb{0}} has a fibrant simplicial category $\mC$ with
  \w[;]{\Obj{\C}=\Obj{\mC}}
\item For every \w[,]{x,y\in\Obj{\C}} \w{\Map\sb{\C}(x,y)}
is homotopy equivalent to \w[.]{\mC(x,y)}
\end{enumerate}
See \cite[\S 2]{BMeadS} for further details.
\end{mysubsection}

\begin{remark}\label{rinftop}
Note that the composition \w{\Map\sb{\C}(y,z)\times\Map\sb{\C}(x,y)\to\Map\sb{\C}(x,z)}
in $\C$ need be neither well-defined nor strictly associative, so a diagram in $\C$,
indexed by \w[,]{I\in\Cat} say, must come equipped with all higher coherences,
encoded by a functor \w{F:\B I\to\C} in $\M$. These also figure in
the universal property of the (co)limit of $F$, when it exists.

In this paper we are interested only in an $\infty$-category model $\C$ for pointed
topological spaces with all countable (co)limits, which may be identified with the
corresponding homotopical (co)limits in $\mC$ (in the sense of \cite{DHKSmitH}).
In particular $\C$ will be (co)tensored over finite simplicial sets.

Thus in what follows, the reader may simply take $\C$ to be \w{\Topa} with the usual
simplicial enrichment. It is important to keep in mind, however, that in this section
diagrams need not commute on the nose, but instead have specified (higher) homotopies
as needed. They can be strictified, of course, but in the course of doing so the maps
and spaces may change up to homotopy.

In this setting, we may formulate \cite[Definition 1.12]{BBSenHS} as follows:
\end{remark}

\begin{defn}\label{dhoh}
Given an $\infty$-category  model \w{(\M,\M\sb{0})} and a pointed \ii category
\w{\C\in\M\sb{0}} with enough colimits, \emph{initial data} for an \wwb{n+1}st
order operation in $\C$ \wb{n\geq 2} consists of:
\begin{enumerate}
\renewcommand{\labelenumi}{(\alph{enumi})~}
\item An \ii functor \w{\Ud:\B\Dron{n}\to\C} which extends to an augmented restricted
$n$-truncated simplicial object \w[,]{\Ud\to X} for some \w[;]{X\in\Obj\C}
\item A map \w{\hf:A\to U\sb{n}} in $\C$ from a (homotopy) cogroup object $A$,
  such that \w{\hat{d}\sb{i}\circ\hf} is nullhomotopic for any
  \w{\hat{d}\sb{i}:U\sb{n}\to U\sb{n-1}} induced by
  \w{d\sb{i}:\bn\to\bnm} in \w[.]{\Dron{n}}
\end{enumerate}

We then have a unique functor \w{\NA:\B\Dron{n}\to\C} in $\M$ with
\w{\NA(\bn)=A} and \w{\NA(\bk)=0} for any \w[.]{0\leq k<n}

In this case, \emph{total data} for \w{(\Ud\to X,\hf)} as above consists of a map
\w{F:\NA\to\Ud} lifting the unique
\w{\hf\sb{\ast}:[\NA]\to[\Ud]} in \w{(\ho\C)\sp{\Dron{n}}} induced by $\hf$.
This total data induces a \emph{value} for the higher order operation
\w{\llrra{\Ud,\hf}} associated to the initial data \w[,]{\lra{\Ud,\hf}} namely the
homotopy class of the map from \w{\hocolim\NA\to X}
induced by $F$ and the augmentation \w{\vare:\bze\to\bmo} in \w[.]{\Dronp{n}}

As we shall see in \S \ref{shoss} below, the homotopy colimit of \w{\NA}
is \w[.]{\Sigma\sp{n}A}
\end{defn}

\begin{remark}\label{eghoh}
A depiction of a higher order operation appears in \wref[,]{diaghho} where the
\emph{initial data} consists of the bottom row and the left map $\hf$, the
\emph{total data} consists of all the vertical maps but the right one,
and the associated \emph{value} is the induced vertical map on the right out of
\w[,]{\Sigma\sp{n}A} the homotopy colimit of the top diagram (which we put in
parentheses to indicate that it is not actually part of the data).
\mysdiag[\label{diaghho}]{
A\ar@/^1pc/[r]^{0}_{\vdots}  \ar@/_1pc/[r]_{0} \ar[dd]\sb{\hf}& 0  \ar@/^1pc/[r]_{\vdots}
  \ar@/_1pc/[r] \ar[dd] & 0 \ar[dd] & \cdots \ar[dd]\sb{F} & 0 \ar[dd] \ar[r] &
  (\Sigma\sp{n}A)\ar[dd]\sp{(f)}\\
 & & & & \\
U\sb{n} \ar@/^1pc/[r]^{d_{0}}_{\vdots}  \ar@/_1pc/[r]_{d_{n}} &
U\sb{n-1}  \ar@/^1pc/[r]^{d_{0}}_{\vdots}  \ar@/_1pc/[r]_{d_{n-1}} & U\sb{n-2} & \cdots &
U\sb{0} \ar[r]\sp{\vare} & X
}
\noindent The vertical label $F$ indicates the implicit coherence homotopies.

The higher homotopy operations described here are essentially the same as those
constructed in \cite[\S 5.20]{BJTurnHA}, and are a special case of the more general
version defined in \cite[\S 3.23]{BJTurnHC} (for \w[).]{\Gamma'=\Drnp{n}\op}
\end{remark}

\begin{mysubsection}{Higher operations and spectral sequences}
\label{shoss}
In the special case when \w[,]{\C=\Topa} \w{A=\bS{k}} is a sphere, and \w{\Ud=\Wd}
is a (strict) simplicial space, with $\vare$ inducing a weak equivalence
\w[,]{\|\Wd\|\simeq\bX} the vertical map \w{(f)} at the right end of \wref{diaghho}
is induced by the map of simplicial spaces consisting of the rest of
the diagram: more precisely, we consider the $n$-truncated unaugmented restricted
versions of the simplicial spaces in question; if we wish, we could extend the top
diagram to a full simplicial space by adding degeneracies, but these play no role
in the discussion here.

As explained in \cite[\S 8]{DKStovB}, this is precisely how the class
\w{[\hf]\in\pi\sb{k}\bW\sb{n}=E\sp{1}\sb{k,n}} in the homotopy spectral
sequence for \w{\Wd} (see \cite{BFrieH}) is represented in \w[,]{\pi\sb{n+k}\|\Wd\|}
assuming it survives to \w[.]{E\sp{\infty}\sb{k,n}}

More generally, given a (coherent) simplicial object \w{\Ud} in a pointed
\ww{(\infty,1)}-category $\C$ as in \S \ref{sinfcat} and a (homotopy) cogroup
object \w[,]{A\in\C} applying the functor \w{\Map\sb{\C}(A,-)} of \ref{sinfcat}(d)
to \w{\Ud} yields a $\infty$-coherent simplicial space \w{\wWd} (in the sense of
Remark \ref{rinftop}) \wh that is, a functor \w[,]{\DK(\Dop)\to\Sa}
where \w{\DK:s\Cat\to s\Cat} is the Dwyer-Kan cofibrant replacement functor of
\cite[\S 2]{DKanL}.

The diagram \w{\wWd} can be straightened to a strict simplicial space \w{\tWd}
by \cite[Corollary 2.5]{DKSmitH}, with Reedy fibrant replacement \w[.]{\Wd}
We define the homotopy spectral sequence for \w{\lra{\Ud,A}} to be
the spiral spectral sequence of \w{\Wd}  (see \cite[\S 8.4]{DKStovB}).
This converges to \w{\pi\sb{\ast}\Map\sb{\C}(A,\colim\Ud)} if $A$
is compact, $\C$ has enough (co)limits, and each \w{\bW\sb{n}} is connected.
See \cite[\S 3]{BMeadS} for further details.
Note that this spectral sequence depends only on the underlying restricted simplicial
object of \w{\Ud} (see \S \ref{snac}).

As shown in \cite[Theorem 3.11]{BMeadS}, a class
\w{[\hf]\in E\sp{1}\sb{k,n}=\pi\sb{0}\Map\sb{\C}(\Sigma\sp{k}A,U\sb{n})}
in this spectral sequence survives to \w{E\sp{r}\sb{k,n}} if and only if it
fits into a diagram in $\C$
\mysdiag[\label{eqerterm}]{
  \Sigma\sp{k}A  \ar@/^1pc/[r]^{0}_{\vdots}  \ar@/_1pc/[r]_{0} \ar[dd]\sb{\hf}  &
  0  \ar@/^1pc/[r]_{\vdots}
  \ar@/_1pc/[r] \ar[dd] & 0 \ar[dd] & \cdots \ar[dd]\sb{F} & 0 \ar[dd] \\
 & & & & \\
U\sb{n} \ar@/^1pc/[r]^{d_{0}}_{\vdots}  \ar@/_1pc/[r]_{d_{n}} &
U\sb{n-1}  \ar@/^1pc/[r]^{d_{0}}_{\vdots}  \ar@/_1pc/[r]_{d_{n-1}} & U\sb{n-2} & \cdots &
U\sb{n-r+1}.
}

Moreover, if a diagram \wref{eqerterm} exists, there is an associated representative
\w{[g]\in E\sp{1}\sb{k+r-1,n-r}=\pi\sb{r}\Map\sb{\C}(\Sigma\sp{k}A,U\sb{n-r})}
of value \w{d\sb{r}([\hf])} the differential,  which is adjoint to a certain map
\w[.]{\gamma:\partial\PP{r}\to\Map\sb{\C}(\Sigma\sp{k}A,U\sb{n-r})}
Here \w{\PP{r}} is the $r$-dimensional permutohedron, a convex polytope in
\w[,]{\RR\sp{n}} so \w{\partial\PP{r}} is an \wwb{r-1}sphere. We use the fact that
each component of the simplicial set \w{\DK(\Dres\op)(\bn,\bmm)} is an
\wwb{n-m-1}-permutohedron (see \cite[Proposition 5.6]{BMeadS})
to construct $\gamma$ out of the various coherence homotopies $F$
in \wref[,]{eqerterm} both those making the squares commute, and those of \w{\Ud}
itself (although by passing to the strictified simplicial space \w{\Wd} as above,
the latter can be eliminated). This description of the differential \w{d\sb{r}}
is used in the proof of Proposition \ref{pnoratho} below.

The construction of $\gamma$ is inductive, successively using nullhomotopies
for the analogous representatives of \w[,]{d\sb{I}([\hf])} for each $k$-fold face map
\w[,]{d\sb{I}} to determine maps on the $k$-dimensional facets of \w[,]{\PP{r}} and
thus finally on \w[.]{\partial\PP{r}} See \cite[\S 6]{BMeadS} for further details.

In this context, note the following:
\begin{enumerate}
\renewcommand{\labelenumi}{(\alph{enumi})~}
\item In this inductive process, the maps on the successive skeleta
of \w{\PP{r}} are determined by the universal properties of the colimits in $\C$
which define the tensoring of $\C$ over $\Ss$ (see Remark \ref{rinftop}). Thus they
make sense in any model $\M$ of \ww{(\infty,1)}-categories satisfying the minimal
requirements of \S \ref{sinfcat} \wh and are in particular homotopy independent.
\item The fact that \w{\DK(\Dres\op)(\bn,\bmm)} depends only on \w{k=n-m} means that
the calculations of the differentials are the same as the analogous ones for
\w{\DK(\Dres\op)(\bkm,\bmo)} \wwh that is, for diagrams of the form \wref[,]{diaghho}
with \w{U\sb{m}} playing the role of $X$. See \cite[\S 4.2]{BMeadS} for an explanation
of the fact that we need only \emph{one} such shifted diagram.
\item This means that the values of the differentials in the spectral sequence of a
(restricted) simplicial space are also determined by values of higher order operations
in the sense of Definition \ref{dhoh} above. In particular, this allows us to
identify the \emph{indeterminacy} of such operations.
\item Finally, the diagram \wref{eqerterm} may be rewritten as a single
\wwb{n+1}truncated restricted simplicial object in $\C$:
\mysdiag[\label{eqlongsimp}]{
U\sb{n+1} \ar@/^1pc/[r]^{\hf}_{\vdots} \ar@/_1pc/[r]_{0}  &
U\sb{n} \ar@/^1pc/[r]^{d_{0}}_{\vdots} \ar@/_1pc/[r]_{d_{n}} &
U\sb{n-1}  \ar@/^1pc/[r]^{d_{0}}_{\vdots}  \ar@/_1pc/[r]_{d_{n-1}} & U\sb{n-2} & \cdots &
U\sb{n-r+1}
}
\noindent in which all but the first face map of \w{U\sb{n+1}:=\Sigma\sp{k}A} is $0$.
Similarly for \wref{diaghho} (see \cite[\S 3.12]{BMeadS}).
\end{enumerate}
\end{mysubsection}

%
%
\sect{Simplicial resolutions}
\label{csr}

Although our formalism works in the generality of Definition \ref{dhoh}, in fact we
shall be mostly concerned with simplicial spaces \w{\Wd} which are resolutions of a
given space $\bX$ (often itself a sphere).  For this purpose we require the following
notions:

\begin{defn}\label{dmco}
In a pointed and complete category $\C$, the $n$-th \emph{Moore chains} object
of a restricted augmented simplicial object \w{\Gd\in\C\sp{\Dresp\op}} is defined to be:
\begin{myeq}\label{eqmoor}
\cM{n}\Gd~:=~\cap\sb{i=1}\sp{n}\Ker\{d\sb{i}:G\sb{n}\to G\sb{n-1}\}~,
\end{myeq}
\noindent with differential
\w[.]{\dif{n}:=d\sb{0}\rest{\cM{n}\Gd}:\cM{n}\Gd\to\cM{n-1}\Gd}
The $n$-th \emph{Moore cycles} object is \w[.]{\cZ{n}\Gd:=\Ker(\dif{n})}
Write \w{w\sb{n}:\cM{n}\Gd\hra G\sb{n-1}} and
\w{v\sb{n}:\cZ{n}\Gd\hra\cM{n}\Gd} for the inclusions.
\end{defn}

\begin{defn}\label{dlmo}
For a (possibly \wwb{n-1}truncated) simplicial object \w{\Gd\in\C\sp{\Dop}} in a
cocomplete category $\C$, the $n$-th \emph{latching object} for \w{\Gd}
is
\begin{myeq}\label{eqlatch}
L\sb{n}\Gd~:=~\colimit{\theta\op:\bk\to\bn}\,G\sb{k}~,
\end{myeq}
\noindent where $\theta$ ranges over the surjective maps \w{\bn\to\bk} in
$\Del$ (for \w[).]{k < n} There is a natural map
\w{\sigma\sb{n}:L\sb{n}\Gd\to G\sb{n}} induced by the indexing maps $\theta$
of the colimit for any $n$-truncated simplicial object, and any iterated
degeneracy map \w{s\sb{I}=\theta\sb{\ast}:G\sb{k}\to G\sb{n}}
factors as \w[,]{s\sb{I}=\sigma\sb{n}\circ \inc\sb{\theta}}
where \w{\inc\sb{\theta}:G\sb{k}\to L\sb{n}\Gd} is the structure
map for the copy of \w{G\sb{k}} indexed by $\theta$.
\end{defn}

\begin{defn}\label{dscwo}
A \emph{CW object} in a pointed category $\C$ is a simplicial object \w{\Gd\in\C\sp{\Dop}}
equipped with a \emph{CW basis} \w{(\oG{n})\sb{n=0}\sp{\infty}} in $\C$ such that
\w[,]{G\sb{n}=\oG{n}\amalg L\sb{n}\Gd} and
\begin{myeq}\label{eqotherzero}
d\sb{i}\rest{\oG{n}}~=~0\hsm \text{for}\hsm 1\leq i\leq n~.
\end{myeq}
\noindent Observe that the restriction \w{d\sb{0}\rest{\oG{n}}:\oG{n}\to G\sb{n-1}}
of the $0$-th face map factors through \w[,]{\odz{G\sb{n}}:\oG{n}\to\cZ{n-1}\Gd}
which we call the \emph{$n$-th attaching map} for \w[.]{\Gd}

We then have an explicit description of the $n$-th latching object of \w[,]{\Gd}
given by:
\begin{myeq}\label{eqslatch}
L\sb{n}\Gd~:=~
\coprod\sb{0\leq k\leq n-1}~~\coprod\sb{0\leq i\sb{1}<\dotsc<i\sb{n-k-1}\leq n-1}~
\oG{k}~,
\end{myeq}
\noindent where the iterated degeneracy map
\w[,]{s\sb{i\sb{n-k-1}}\dotsc s\sb{i\sb{2}}s\sb{i\sb{1}}} restricted to the
basis \w[,]{\oG{k}} is the inclusion into the copy of \w{\oG{k}} indexed by
$k$ (in the first coproduct) and \w{(i\sb{1},\dotsc,i\sb{n-k-1})} (in the second).
\end{defn}

\begin{remark}\label{rcwr}
As shown in \cite[Theorem 2.29]{BJTurnHH}, any algebraic resolution \w{\Vd\to\pis\bX}
(in the sense of \cite[\S 1]{BJTurnHH}) of a pointed connected space $\bX$ has a
realization by a \emph{CW resolution}: that is, an augmented simplicial space
\w{\Wd\to\bX} with a CW basis \w{(\oW{n})\sb{n=0}\sp{\infty}} such that each
\w{\oW{n}\subseteq\bW\sb{n}} is homotopy equivalent to a wedge of spheres,
with \w[.]{\pis\Wd=\Vd} In this particular case the augmentation
\w{\vare:\|\Wd\|\to\bX} is a weak equivalence.
\end{remark}

\begin{mysubsection}{A combinatorial description}
\label{scomb}
The above discussion provides a homotopy-invariant interpretation
of Kan's ``combinatorial'' approach to homotopy groups (see \cite{KanD}),
in the following sense:

Let $\bK$ be any pointed connected simplicial set, which we may assume to be reduced,
and \w{G\bK} the simplicial group obtained by applying Kan's $G$-functor (a model of
\w{\Omega|\bK|} \wwh see \cite[V,\S 5]{GJarS}).
Thus \w{GK\sb{n}} is a free group, for each \w[,]{n\geq 0} which can be thought
of as a discrete simplicial group \wh a model for a wedge of circles indexed by
the pointed set \w{K\sb{n}} (thought of as a discrete pointed simplicial set) under
Milnor's $F$ functor (a model of \w{\Omega\Sigma|\bK|} \wwh see \cite[V,\S 6]{GJarS}).
Thus applying the classifying space functor (cf.\ \cite[V,\S 4]{GJarS})
dimensionwise to \w{G\bK} yields a simplicial space \w[,]{\Wd:=\overline{W}G\bK}
with each \w{W\sb{n}} a wedge of circles and \w[.]{\|\Wd\|\simeq\bK}

The Bousfield-Friedlander spectral sequence for \w{\Wd} has
$$
\Ett{s}{t}~=~\pi\sp{h}\sb{s}\pi\sp{v}\sb{t}\Wd\cong~
\begin{cases}
  \pi\sb{s+1}\bK & \text{if}~t=1\\
  0&\text{otherwise}
\end{cases}
$$
(see \S \ref{shoss}), so it collapses at the \ww{E\sp{2}}-term.  The formalism of
\cite[\S 3]{BBSenHS} (see also \cite[Theorem 3.11]{BMeadS}) then allows us to interpret
each element of \w{\pi\sb{s+1}\bK} as the value of an \wwb{s+1}st order homotopy
operation associated to the simplicial space \w[.]{\Wd}

Of course, this interpretation has no computational advantage; however, it shows that
the combinatorial description of the homotopy groups of a simplicial group
in \cite{KanD} has a homotopy-invariant meaning. See \cite[(7.9)]{BBSenHS} for an example
(due to Kan).
\end{mysubsection}

At the other extreme, we note the following

\begin{prop}\label{pnoratho}
Let \w{\Wd} be a simplicial space in which each \w{\bW\sb{n}} is weakly equivalent to a
wedge of simply-connected rational spheres. Then the Bousfield-Friedlander
spectral sequence for \w{\Wd} collapses at the \ww{E\sp{2}}-term.
\end{prop}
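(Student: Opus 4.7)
The plan is to show that all differentials $d_r$ for $r\geq 2$ in the Bousfield--Friedlander spectral sequence of $\Wd$ vanish. By the discussion of \S\ref{shoss} (in particular, the interpretation via \cite[Theorem 3.11]{BMeadS}), any class $[\hf]\in E\sp{1}\sb{k,n}=\pi\sb{k}\bW\sb{n}$ that survives to $E\sp{r}$ fits into a diagram of the form \wref{eqerterm}, and $d\sb{r}[\hf]\in E\sp{1}\sb{k+r-1,n-r}$ is represented by a map $\gamma\colon\partial\PP{r}\to\Map\sb{\ast}(\bS\sp{k},\bW\sb{n-r})$, adjointly a class in $\pi\sb{k+r-1}\bW\sb{n-r}$. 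Since each $\bW\sb{m}$ is rational, it suffices to show that the rational class of $\gamma$ is trivial.

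The first step is to record the rational structure of the mapping spaces involved: because each $\bW\sb{m}$ is weakly equivalent to a wedge of simply-connected rational spheres, Hilton--Milnor (in its rational incarnation) identifies $\pi\sb{\ast}\bW\sb{m}\otimes\QQ$ with the free graded Lie algebra on the fundamental classes of the wedge summands, and equivalently the minimal Quillen DGL model of $\bW\sb{m}$ is free with zero differential. In particular $\bW\sb{m}$ is formal, and there are no primary relations among its generators that could support a nonzero higher bracket.

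Next I would examine the inductive construction of $\gamma$ recalled at the end of \S\ref{shoss}. At each stage one extends across a cell of $\PP{r}$ by choosing a nullhomotopy for the value of a lower-order higher operation applied to the face maps of $\Ud$. Since the targets $\Map\sb{\ast}(\bS\sp{k},\bW\sb{m})$ are rational and the underlying free DGL has zero differential, every such lower-order operation represents the trivial element of the free Lie algebra; thus each nullhomotopy can be chosen canonically. The class $[\gamma]$ obtained by assembling these canonical choices is the value of an iterated Lie--Massey-type product on the fundamental classes of the wedge summands of $\bW\sb{n-r}$, which must vanish in the free Lie algebra with zero differential. Hence $d\sb{r}[\hf]=0$.

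The main obstacle is making rigorous the claim of the previous paragraph, namely that higher-order operations in the sense of Definition \ref{dhoh} taking values in a wedge of simply-connected rational spheres are all zero. I would do this by reducing the construction of $\gamma$ to the Lie--Massey product formalism that will be developed in Section \ref{clmp} and then invoking the standard vanishing of iterated Lie--Massey products in a free DGL with zero differential (equivalently, the formality of $\bW\sb{n-r}$). A cleaner alternative, should the Lie--Massey reduction prove technical, would be to choose a CW resolution (Remark \ref{rcwr}) of $\Wd$ whose bases are also wedges of rational spheres, strictify via \cite[Corollary 2.5]{DKSmitH}, and observe that rationally the spiral spectral sequence of such a resolution becomes the homology spectral sequence of a simplicial rational chain complex, which collapses at $E\sp{2}$ by Dold--Kan.
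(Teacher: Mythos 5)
Your main argument is essentially the paper's: both rest on the observation that a wedge of simply-connected rational spheres is \emph{coformal} (its Quillen DGL model is free with zero differential), and both use this to kill every $d\sb{r}$ for $r\geq 2$ by examining the obstruction to extending a diagram of the form \wref{eqerterm} one step further to the right. The gap you flag at the end is closed in the paper not by a reduction to Lie--Massey products but by a more direct device: one continues the diagram by a quasi-isomorphism from $U\sb{n-r}$ into a DGL with all differentials zero, and in that target the obstruction class (the value of $d\sb{r}[\hf]$) visibly vanishes, so $[\hf]$ is already represented by a Moore cycle in a diagram of the form \wref{diaghho}. Be warned, however, that your proposed ``cleaner alternative'' does not work as stated: the higher differentials of the spiral spectral sequence are genuine higher homotopy operations rather than chain-level data, so replacing $\Wd$ by a simplicial rational chain complex and invoking Dold--Kan can neither detect nor kill them --- the collapse really does require the coformality of the individual levels, exactly as in your main argument.
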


\begin{proof}
Using the differential graded Lie model for \w[,]{\Wd} we see that each
\w{\bW\sb{n}} is (intrinsically) \emph{coformal} (that is, has a cofibrant model with $0$
differential, namely, \w{\pis\bW\sb{n}} itself, since it is a free graded Lie algebra).
Thus given an element \w{[\alpha]\in\Ett{n}{t}} represented by a Moore cycle
\w[,]{\alpha\in Z\sb{r}\pi\sb{s}\Wd}
we may assume by induction on \w{r\geq 2} that a representative $\hat{f}$ for
$\alpha$ fits into an $\infty$-commutative  diagram of the form
\wref{eqerterm} ending in \w[.]{U\sb{n-r+1}} If we continue this by a
quasi-isomorphism \w{\varphi:U\sb{n-r+1}\to\widehat{U}\sb{n-r+1}} into
a DGL with all differentials equal to $0$, we see that the obstruction to extending
\wref{eqerterm} one more step to the right to \w{U\sb{n-r}} (that is, the
value of the \ww{d\sb{r}}-differential) must vanish. This follows from the
inductive construction of \ww{d\sb{r}(\lra{\hat{f}})} in \cite[\S 6.3]{BMeadS},
in terms of (higher) homotopies between its composites with the face maps of \w{\Wd}
(see \S \ref{shoss} above).
When mapping into a DGL with zero differentials, only identity homotopies exist.

Thus \w{[\alpha]} is in fact represented in degree $s$ of the cycles by
\w{\hat{f}\in Z\sb{n}\Wd} \wwh so that it fits into a diagram
of the form \wref[.]{diaghho}
\end{proof}

\begin{remark}\label{rnoratho}
Of course, when \w{\bX=\|\Wd\|} itself is not coformal, the induced map
\w{f:\Sigma\sp{r}\bS{s}\to\bX} in \wref{diaghho} can still be non-trivial, so that
\w{[f]\in\pi\sb{r+s}\bX} may be the value of a higher homotopy operation, such as
a rational higher Whitehead product (see \S \ref{chowp} below).
\end{remark}

\begin{corollary}\label{cnoratho}
If \w{\Wd} is a simplicial space in which each \w{\bW\sb{n}} is a wedge of
simply-connected spheres, any element \w{[\alpha]\in\Ett{s}{t}} of infinite
order has a multiple which is a permanent cycle in the Bousfield-Friedlander
spectral sequence.
\end{corollary}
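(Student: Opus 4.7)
The plan is to compare the Bousfield--Friedlander spectral sequence of $\Wd$ with that of its levelwise rationalization, use Proposition \ref{pnoratho} to force the rational differentials to vanish, and deduce that the integral differentials on $[\alpha]$ must be torsion, so that a suitable multiple of $[\alpha]$ survives to $E\sp{\infty}$.

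First, I would rationalize $\Wd$ levelwise: since each $\bW\sb{n}$ is a wedge of simply-connected spheres, its rationalization $(\bW\sb{n})\sb{\QQ}$ is a wedge of simply-connected rational spheres, and the naturality of rationalization yields a map of simplicial spaces $\phi:\Wd\to(\Wd)\sb{\QQ}$ and hence a map of spectral sequences. By Proposition \ref{pnoratho} all differentials in the target spectral sequence vanish identically.

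The key intermediate step is to identify the rationalized pages with the tensor products $E\sp{r}\otimes\QQ$: at the $E\sp{2}$-level this follows from $\pi\sb{t}((\bW\sb{n})\sb{\QQ})\cong\pi\sb{t}(\bW\sb{n})\otimes\QQ$ (since each $\bW\sb{n}$ is simply connected) together with the $\ZZ$-flatness of $\QQ$; and because $-\otimes\QQ$ is exact, this identification is preserved upon taking homology to pass from $E\sp{r}$ to $E\sp{r+1}$. Consequently the kernel of $\phi$ on each page is exactly the torsion subgroup of $E\sp{r}$. I would then argue by induction on $r$: suppose $N\sb{r-1}[\alpha]$ survives to $E\sp{r}$ for some integer $N\sb{r-1}\geq 1$. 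Since $\phi$ sends $N\sb{r-1}[\alpha]$ to a class killed by $d\sb{r}\sp{\QQ}=0$, the integral differential $d\sb{r}(N\sb{r-1}[\alpha])$ lies in $\ker\phi$ on page $r$, hence has some finite order $k\sb{r}$; setting $N\sb{r}:=k\sb{r}N\sb{r-1}$ gives $d\sb{r}(N\sb{r}[\alpha])=0$, so $N\sb{r}[\alpha]$ survives to $E\sp{r+1}$. Because the Bousfield--Friedlander spectral sequence lives in a half-plane, only finitely many pages $r$ can produce a nonzero differential out of the bidegree of $[\alpha]$, so the recursion terminates in a single integer $N$ with $N[\alpha]$ a permanent cycle.

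The principal obstacle I anticipate is the page-by-page identification of $\ker\phi$ with the torsion subgroup: this requires a careful induction exploiting the exactness of rationalization and the good behavior of the functor $(-)\sb{\QQ}$ on homotopy groups of the simply-connected levels $\bW\sb{n}$, so that the isomorphism $E\sp{r}\sb{\QQ}\cong E\sp{r}\otimes\QQ$ genuinely persists through passage to the next page. Once this is in hand, the remainder is bookkeeping with orders of torsion elements and the finiteness of relevant differentials in a half-plane spectral sequence.
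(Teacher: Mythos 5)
Your argument is correct and is essentially the one the paper intends: the corollary is stated without proof as an immediate consequence of Proposition \ref{pnoratho}, and the natural derivation is exactly your comparison of the integral spectral sequence with that of the levelwise rationalization, using flatness of $\QQ$ to identify each rational page with $E\sp{r}\otimes\QQ$ (so that the kernel of the comparison map is the torsion subgroup), and the half-plane vanishing to terminate the induction on $r$. The one point worth making explicit in a write-up is the naturality of the identification $d\sb{r}\sp{\QQ}=d\sb{r}\otimes\QQ$ page by page, which you correctly flag as the only real thing to check.
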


%
%
\sect{Combining higher order homotopy operations}
\label{chho}

Because we are interested in higher order operations in the context of homotopy groups,
we now specialize to the case where \w{\C=\Sa} and all the diagrams described above
consist of (finite) wedges of spheres \wh except possibly $\bX$. In this situation
we consider the higher order constructions described above to be higher \emph{homotopy}
operations, in the strict sense of the term.

In \cite[Theorem 7.16]{BBSenHS}, we showed that all elements in the homotopy groups
of a finite wedge of simply-connected spheres are ``generated of higher order'' by the
fundamental classes and their Whitehead products: that is, all such elements
can be obtained recursively from these ``generators'' under the group operation
and composition from values of higher order operations (as defined in \S \ref{dhoh}).
Our goal here is to make this recursive construction more explicit.

\begin{mysubsection}{Addition}
\label{sadd}
When $A$ in Definition \ref{dhoh} is a coproduct \w{A=A\sb{1}\amalg A\sb{2}}
of two homotopy cogroup objects, diagram \wref{diaghho} also splits as a coproduct
of the corresponding diagrams for \w{A\sb{1}} and \w[,]{A\sb{2}} since the choices of
the coherence homotopies restricted to each summand are independent of each other.
More precisely, since by \cite[Corollary 2.5]{DKSmitH} we may assume \w{\Ud}
of \wref{diaghho} has been strictified, we may use the universal property of the
coproduct to identify the simplicial space \w{\Map\sb{\C}(\Sigma\sp{k}A,\Ud)} with
\w[,]{\Map\sb{\C}(\Sigma\sp{k}A\sb{1},\Ud)\times\Map\sb{\C}(\Sigma\sp{k}A\sb{2},\Ud)}
and use this identification to split \wref{diaghho} as a product accordingly.

In particular, when \w{\bA\sb{1}=\bA\sb{2}=\bA} is a sphere (or a wedge of spheres) and
\w{\nabla:\bA\to\bA\vee\bA} is the fold map (the cogroup structure map), it induces
the fold map on realizations of
\w[.]{\ostar{\bA}{S\sp{m}}\to\ostar{(\bA\vee\bA)}{S\sp{m}}} Postcomposing $\nabla$
with two maps \w{\hf\sb{1}\bot\hf\sb{2}:\bA\vee\bA\to\bW\sb{m}} thus yields
the sum of any two corresponding values \w{f\sb{i}:\Sigma\sp{m}\bA\to\bX} \wb[.]{i=1,2}
\end{mysubsection}

The next step is to interpret the composition of two maps when at least one of the
two is the value of a higher operation:

\begin{mysubsection}{Composition}
\label{spoc}
First note that if \w{[f]\in[\Sigma\sp{m}\bA,\bX]} is a value of a higher order
operation associated to the initial data \w[,]{\lra{\Wd\to\bX,\hf:\bA\to\bW\sb{m}}}
postcomposing $f$ with a map \w{g:\bX\to\bY} takes a particularly simple form:
we need only compose the augmentation \w{\vare:\bW\sb{0}\to\bX}
with $g$ to obtain a new initial data \w[,]{\lra{\Wd\to\bY,\hf:\bA\to\bW\sb{m}}}
with the same total data, and the corresponding value will be \w[.]{[g\circ f]}

On the other hand, to precompose a value \w{f:\Sigma\sp{m}\bA\to\bX} of a higher
operation with a given map \w[,]{h:\bB\to\Sigma\sp{m}\bA} we make use of our
assumption that $\bA$ and $\bB$ are (wedges of) spheres:

This implies that \w{\Sigma\sp{m}\bA} can be resolved by the simplicial space
\w{\Zd=\ostar{\bA}{S\sp{m}}} (see \S \ref{snac}), with \w{\bZ\sb{i}=0} for \w[,]{i<m}
\w[,]{\bZ\sb{m}=\bA} and \w{\bZ\sb{j}} determined by the degeneracies on $\bA$
(see \cite[\S 7.1]{BBSenHS}). The class \w{[h]} is represented in the
homotopy spectral sequence for \w{\Zd} in some filtration \w{r\geq m} by a map
\w[,]{\hh:\bB'\to\bZ\sb{r}} where \w{\Sigma\sp{r}\bB'=\bB} (which we may take to
be a single sphere). Thus \w{[h]} is a value of
the \wwb{r+1}st order homotopy operation associated to the
initial data \w[,]{\lra{\Zd,\hh}} and total data which include coherences $H$.
\end{mysubsection}

\begin{defn}\label{dsplice}
For any pointed connected $\bA$, let \w{\Zd} be the resolution \w{\ostar{\bA}{S\sp{m}}}
of \w{\Sigma\sp{m}\bA} (in the sense of \S \ref{rcwr}), with \w[,]{\bZ\sb{m}=\bA}
and let \w{[f]\in[\Sigma\sp{m}\bA,\bX]} be a value of an \wwb{m+1}th order
operation associated to the initial data \w{\lra{\Wd\to\bX,\hf:\bA\to\bW\sb{m}}}
as above. The \emph{splicing} of \w{\Zd} and \w{\Wd} along $\hf$, denoted by
\w[,]{\Zd\propto\Wd} is defined to be the restricted $r$-truncated simplicial object
\w[,]{\Yd:\Dronp{r}\to\C} augmented to $\bX$, defined \w{\bY\sb{k}:=\bW\sb{k}} for
\w[,]{k<m} \w{\bY\sb{k}:=\bZ\sb{k}} for \w[,]{r\geq k\geq m} with face maps equal to
the given ones for \w{\Wd} and \w[,]{\Zd} except for \w[,]{d\sb{i}\sp{\Yd}} which is
the composite \w{d\sb{0}\sp{\Wd}\circ\hf:\bZ\sb{m}\to\bW\sb{m-1}} for
\w[,]{i=0} and $0$ for \w[.]{1\leq i\leq m}
\end{defn}

\begin{lemma}\label{lsplice}
The simplicial identities hold in \w[.]{\Zd\propto\Wd}
\end{lemma}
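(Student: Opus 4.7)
The plan is to verify the face--face simplicial identities $d_a \circ d_b = d_{b-1} \circ d_a$ (for $a < b$) by partitioning them according to the level of the intermediate object. When the composite $\bY_k \to \bY_{k-1} \to \bY_{k-2}$ lies entirely in one of the two constituent simplicial objects---that is, when $k \leq m-1$ (so everything happens in $\Wd$) or $k \geq m+2$ (so everything happens in $\Zd$)---the identity is inherited from the simpliciality of $\Wd$ or $\Zd$. Only the crossing case $k = m+1$ requires separate treatment, since here the inner face map $d_b:\bY_{m+1}=\bZ_{m+1}\to\bZ_m=\bA$ comes from $\Zd$ while the outer face map $d_a:\bA\to \bW_{m-1}$ is one of the new maps specified in Definition \ref{dsplice}.

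For the crossing identity I would exploit the explicit combinatorial structure of $\Zd=\ostar{\bA}{S^m}$. The minimal simplicial $m$-sphere has a single non-degenerate $m$-simplex $\sigma_m$, so the latching object $\bZ_{m+1}$ decomposes as a wedge $\bigvee_{p=0}^m \bA_{(p)}$, one summand for each degeneracy $s_p\sigma_m$; a direct application of the identities $d_j s_p$ in $S^m$ (noting that all lower $\bZ_i$ vanish) shows that $d_j^\Zd\rest{\bA_{(p)}}$ equals $\mathrm{id}_\bA$ when $j \in \{p, p+1\}$ and $0$ otherwise. With this in hand, the subcase $a \geq 1$ is trivial, since then both $d_a^\Yd$ and $d_{b-1}^\Yd$ (using $b-1 \geq a \geq 1$) vanish by definition, so both sides of the identity are zero. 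The subcase $a=0$ is handled by checking each summand $\bA_{(p)}$ individually: on $\bA_{(0)}$ both sides agree tautologically (either both equal $d_0^\Wd\circ\hf$ or both vanish, depending on $b$); on summands with $p \geq 1$, the right-hand side vanishes automatically because $d_0^\Zd$ already kills $\bA_{(p)}$, and the corresponding vanishing of the left-hand side reduces to the nullity of $d_0^\Wd\circ\hf$, supplied by the initial data condition of Definition \ref{dhoh}(b).

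The main subtlety---and the place where the $\infty$-categorical viewpoint of Section \ref{choo} is essential---is that the hypothesis ``$d_i\hf$ is nullhomotopic'' from the initial data provides these vanishings only up to chosen homotopies. These nullhomotopies, however, are already part of the coherence data witnessing the total data for the higher operation associated to $\Wd$, and they are precisely what make the crossing-level identities hold in the appropriate coherent sense. Consequently the verification amounts to combinatorial bookkeeping of the coherences already available; the substantive content of the lemma is the observation that no additional coherence data beyond what $\Zd$, $\Wd$, and $\hf$ already supply is needed to form the splicing.
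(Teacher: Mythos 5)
Your strategy matches the paper's, and your treatment of the level-\ww{(m+1)} identities is correct \wh indeed more explicit than the paper's own argument, since the summand-by-summand analysis of \w{\bZ\sb{m+1}=\bigvee\sb{p=0}\sp{m}\bA\sb{(p)}} and the computation of \w{d\sb{j}\sp{\Zd}} on each \w{\bA\sb{(p)}} are accurate. However, your case analysis has a genuine gap: the trichotomy ``\w{k\leq m-1}, \w{k=m+1}, \w{k\geq m+2}'' omits the level \w[,]{k=m} that is, the identities \w{d\sb{a}\circ d\sb{b}=d\sb{b-1}\circ d\sb{a}} for composites \w[.]{\bY\sb{m}\to\bY\sb{m-1}\to\bY\sb{m-2}} These do \emph{not} take place entirely inside \w[:]{\Wd} here the inner face map \w{d\sb{b}:\bY\sb{m}=\bA\to\bY\sb{m-1}=\bW\sb{m-1}} is one of the newly defined maps of Definition \ref{dsplice}, while the outer one is a face map of \w[.]{\Wd} This is exactly case (a) of the paper's proof, and it is not vacuous: since \w{b\geq 1} the left-hand side is $0$, but for \w{a=0} the right-hand side is \w[,]{d\sb{b-1}\sp{\Wd}\circ d\sb{0}\sp{\Wd}\circ\hf} which is only \emph{nullhomotopic} \wh via the simplicial identity \w{d\sb{b-1}d\sb{0}=d\sb{0}d\sb{b}} in \w{\Wd} together with the chosen nullhomotopy of \w{d\sb{b}\sp{\Wd}\circ\hf} supplied by the coherences $F$ (or, directly, via the nullhomotopy of \w[).]{d\sb{0}\sp{\Wd}\circ\hf} So this case needs exactly the same kind of appeal to the total data as your level-\ww{(m+1)} argument; it is easily repaired, but as written your proof does not cover it.

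A smaller point: your closing paragraph correctly locates the required coherences in the total data for the operation associated to \w[,]{\lra{\Wd\to\bX,\hf}} but you should make explicit that these nullhomotopies are needed at \emph{both} crossing levels \w{k=m} and \w[,]{k=m+1} not only at \w[.]{k=m+1} Once the \w{k=m} case is added, your argument agrees in substance with the paper's.
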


\begin{proof}
We need only check:
\begin{enumerate}
\renewcommand{\labelenumi}{(\alph{enumi})~}
\item For \w[,]{d\sb{i}\circ d\sb{j}:\bY\sb{m}\to\bY\sb{m-2}} which is $0$ for
  \w{0\leq i<j} and thus equal to \w{d\sb{j-1}\circ d\sb{i}} unless \w[,]{i=j=0}
  in which case we must use the coherences $F$.
\item For \w[,]{d\sb{i}\circ d\sb{j}:\bY\sb{m+1}\to\bY\sb{m-1}} which is $0$ for
  \w{0<i<j} and thus equal to \w{d\sb{j-1}\circ d\sb{i}} by the universal property
  of the zero map in \ii categories.

  If \w{i=0<j} we have
\w{d\sb{0}\sp{\Yd}\circ d\sb{j}\sp{\Yd}=d\sb{0}\sp{\Wd}\circ\hf\circ d\sb{j}\sp{\Zd}}
which is homotopic via $F$ to
\w[.]{0\circ d\sb{i}\sp{\Zd}\circ d\sb{j}\sp{\Zd}=
  0\circ d\sb{j-1}\sp{\Zd}\circ d\sb{i}\sp{\Zd}}
\end{enumerate}
Here we use the coherences for \w[,]{\Zd} if we do not wish to assume that
it is strict.
\end{proof}

\begin{corollary}\label{csplice}
If \w{[f]\in[\Sigma\sp{m}\bA,\bX]} is a value of an \wwb{m+1}th order
operation associated to the initial data \w[,]{\lra{\Wd\to\bX,\hf}} for some wedge
of spheres $\bA$, and \w{[h]\in[\bB,\Sigma\sp{m}\bA]} is a value of
the \wwb{r+1}st order homotopy operation associated to initial data
\w[,]{\lra{\Zd=\ostar{\bA}{S\sp{m}},\hh}} then \w{f\circ h} is a value of the
\wwb{r+1}st order operation associated to the initial data
\w[,]{\lra{\Zd\propto\Wd\to\bX,\hh}} with total data given by the coherences
$H$ for $h$.
\end{corollary}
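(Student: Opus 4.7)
The plan is to exhibit the value of the operation associated to $\langle\Zd\propto\Wd\to\bX,\hh\rangle$ with total data $H$ as the composite $f\circ h$, by filtering the spliced diagram at level $m$ into its ``upper'' (resolving $\Sigma\sp{m}\bA$) and ``lower'' (augmenting to $\bX$) portions.

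First I would verify that $\hh:\bB'\to\bY\sb{r}=\bZ\sb{r}$ is indeed admissible initial data for $\Zd\propto\Wd$. Because $r\ge m$, the face maps out of $\bY\sb{r}$ in the splicing coincide with those of $\Zd$; the hypothesis that $\hh$ is initial data for $(\Zd,\hh)$ therefore yields nullhomotopies $d\sb{i}\circ\hh\simeq 0$ for every $i$, and Lemma \ref{lsplice} guarantees that the composite structure $\Zd\propto\Wd$ indeed satisfies the simplicial identities. Consequently $\NA$ (for the splicing) is the functor with $\NA(\br)=\bB'$ and $\NA(\bk)=0$ otherwise, and extending $\hh$ to total data $F':\NA\to\Yd$ amounts to filling in all higher coherences witnessing that the iterated faces of $\hh$ vanish down through levels $r-1,\ldots,0$.

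Next I would argue that the coherences $H$ already supply all of this data. On the upper slab $r\ge k\ge m$ the splicing is literally $\Zd$, so $H$ provides the coherences unchanged. Crossing level $m$, the map $d\sb{0}\sp{\Yd}:\bY\sb{m}\to\bY\sb{m-1}$ is by definition $d\sb{0}\sp{\Wd}\circ\hf$, while $d\sb{i}\sp{\Yd}=0$ for $1\le i\le m$; all iterated faces from $\bY\sb{m}$ into the $\Wd$-part therefore factor through $\hf$ followed by the corresponding iterated face of $\Wd$. Since $\hf$ is admissible initial data for $(\Wd,\hf)$, these composites are null (via the coherences of the original $F$), and combining with the coherences $H$ inductively fills in the required homotopies on the lower slab $k<m$. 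Thus $H$ together with the splicing's built-in use of $\hf$ and the coherences $F$ constitutes valid total data.

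Finally, I would identify the resulting value by exploiting the fact that the hocolim of $\NA$ over $\Dronp{r}$ is $\Sigma\sp{r}\bB'=\bB$, and factoring the induced map $\bB\to\bX$ through the intermediate hocolim at level $m$. The upper portion of $\Zd\propto\Wd$ is $\Zd$ truncated at level $r$, whose hocolim with its natural augmentation is $\Sigma\sp{m}\bA$; restricting $F'$ to this portion is exactly the total data $H$ for $(\Zd,\hh)$, so the induced map $\bB\to\Sigma\sp{m}\bA$ is $h$ by definition. The lower portion, together with $\hf$ at the boundary, recovers precisely the diagram defining $f$ from $(\Wd,\hf)$ with its total data, producing the map $\Sigma\sp{m}\bA\to\bX$ equal to $f$. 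Composing through level $m$ yields $f\circ h$, as claimed. The main obstacle is to make the ``filtration at level $m$'' argument rigorous in the $\infty$-categorical setting, i.e.\ to justify that the homotopy colimit of the spliced diagram decomposes coherently through $\Sigma\sp{m}\bA$ as a consequence of the universal properties of the tensorings discussed in \S\ref{shoss}; once this decomposition is in hand, the identification of the two pieces with $h$ and $f$ respectively is formal.
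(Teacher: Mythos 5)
Your proposal is correct and follows essentially the same route as the paper, which offers no separate proof beyond Lemma \ref{lsplice} and the stacked diagram \wref{diagchho}: one checks that $\hh$ is admissible initial data for the spliced object, that $H$ (together with the coherences $F$ already built into $\Zd\propto\Wd$) supplies the total data, and then reads off the value by factoring the induced map on homotopy colimits through $\Sigma\sp{m}\bA$. Your explicit flagging of the factorization-through-level-$m$ step is exactly the content that the paper delegates to the figure.
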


This is illustrated in Figure \wref[:]{diagchho}

\myrdiag[\label{diagchho}]{
\bB' \ar@/^1pc/[r]^{0}_{\vdots} \ar@/_1pc/[r]_{0} \ar[dd]\sb{\hh} & \hsp 0\ar[dd]\dotsc
& 0 \ar@/^1pc/[r]^{0}_{\vdots}  \ar@/_1pc/[r]_{0}  \ar[dd] &
  \hs 0 \ar[dd] & \cdots \ar[dd]\sb{H} & 0 \ar[dd] \ar[r] &
(\Sigma\sp{r}\bB'=\bB) \ar[dd]\sp{(h)}\\
&& & & & &\\
\bZ\sb{r} \ar@/^1pc/[r]^{d_{0}}_{\vdots}\ar@/_1pc/[r]_{d_{r}} & \hsp\bZ\sb{r-1}\dotsc &
\bZ\sb{m}=\bA \ar@/^1pc/[r]^{0}_{\vdots}  \ar@/_1pc/[r]_{0} \ar[dd]\sb{\hf}  &
\hs 0 \ar[dd] & \cdots \ar[dd]\sb{F} & 0 \ar[dd] \ar[r] &
(\Sigma\sp{m}\bA) \ar[dd]\sp{(f)}\\
&& & & & & \\
&& \bW\sb{m} \ar@/^1pc/[r]^{d_{0}}_{\vdots}  \ar@/_1pc/[r]_{d_{n}} &
\bW\sb{m-1} & \cdots & \bW\sb{0} \ar[r]\sp{\vare} & \bX
}

\begin{remark}\label{rprchho}
When \w[,]{r=m} the \wwb{m+1}st order homotopy operation yielding \w{[h]} is simply
the $m$-fold suspension of \w{\widehat{h}} (see \cite[Remark 7.3]{BBSenHS}), and in
this case we can more directly represent \w{h\sp{\ast}[f]} by the \wwb{m+1}st order
homotopy operation associated to the initial data \w[.]{\lra{\Wd\to\bX,\hf\circ\hh}}
\end{remark}

\begin{mysubsection}{Values in initial data}
\label{svid}
If \w{[f]\in[\Sigma\sp{n}\bA,\bX]} is a value of an \wwb{n+1}st order homotopy
operation with initial data \w[,]{\lra{\Wd\to\bX,\hf:\bA\to\bW\sb{n}}} and
\w{\hf:\bA\to\bW\sb{n}} is itself a value of an $r$-th order operation with
initial data \w{\lra{\Zd\to\bW\sb{n},\hg}} we must have \w{\hg:\bB\to\bZ\sb{r}} for
\w[.]{\Sigma\sp{r}\bB=\bA}

Figure \wref{eqvid} then exhibits $f$ as the value of an \wwb{n+r+1}st order
homotopy operation with initial data \w{\lra{T\sp{n}\Zd\propto\Wd\to\bX,\hg}}
(see \S \ref{snac}), which is a combination of the two operations:

\myrdiag[\label{eqvid}]{
  \bB \ar@/^0.8pc/[r]\sp{0}\sb{\vdots}  \ar@/_1pc/[r]_{0} \ar[dd]\sb{\hg} &
  \hsp 0\ar[dd]\dotsc
 & 0 \ar@/^1pc/[r]^{0}_{\vdots}  \ar@/_1pc/[r]_{0}  \ar[dd] &
  \hs 0 \ar[dd] & \cdots \ar[dd]\sb{H} & 0 \ar[dd] \ar[r] &
(\Sigma\sp{r+n}\bB=\Sigma\sp{n}\bA) \ar[dddd]\sp{(f)}\\
&& & & & &\\
  \hZ\sb{r+n} \ar@/^1.4pc/[r]\sb{\stackrel{d\sb{0}}{\vdots}} \ar@/_1pc/[r]\sb{d\sb{r+n}} &
  \hsp \hZ\sb{r+n-1}\dotsc &
\hZ\sb{n} \ar@/^0.8pc/[r]\sp{0}\sb{\vdots}   \ar@/_1pc/[r]_{0} \ar[dd]\sb{\hf=g}  &
\hs 0 \ar[dd] & \cdots \ar[dd]\sb{F} & 0 \ar[dd] \\
&& & & & & \\
&& \bW\sb{n} \ar@/^1.4pc/[r]\sb{\stackrel{d\sb{0}}{\vdots}}  \ar@/_1pc/[r]_{d_{n}} &
\bW\sb{n-1} & \cdots & \bW\sb{0} \ar[r]\sp{\vare} & \bX
}
\end{mysubsection}

%
%
\sect{Whitehead products}
\label{cwp}

There is a formal aspect to the decomposition of higher homotopy operations into their
constituents \wh namely, the part involving Whitehead products \wh since their description
as values of higher operations uses only the combinatorics of the degeneracy maps
in simplicial resolutions of wedges of spheres. We first recall:

\begin{notn}\label{nshuffle}
  For integers \w[,]{n>k\geq 1}  we denote by \w{\cI{n}{k}} the
  set of all \wwb{k,n-k}\emph{shuffles} \wh  that is, all permutations
  \w{\sigma=(\sigma',\sigma'')} in \w{S\sb{n}} for
  \w{\sigma'=(\sigma'\sb{1},\dotsc,\sigma'\sb{k})} and
  \w{\sigma''=(\sigma''\sb{1},\dotsc,\sigma''\sb{\ell})} \wb{\ell=n-k}
  a partition of \w[,]{\{1,\dotsc,n\}}
  with \w{\sigma'\sb{1}<\sigma'\sb{2}<\dotsc<\sigma'\sb{k}} and
  \w[.]{\sigma''\sb{1}<\sigma''\sb{2}<\dotsc<\sigma''\sb{\ell}}  We set
\begin{myeq}\label{eqwi}
\wI{n}{k}~:=~\begin{cases}
\{(\sigma',\sigma'')\in\cI{n}{k}~:\ \sigma'\sb{1}=1\}&\text{if}\  n=2k\\
\cI{n}{k}&\text{otherwise}.
\end{cases}
\end{myeq}

We denote by \w{\hI{n}{k}} the same set of \wwb{k,n-k}shuffles as \w[,]{\cI{n}{k}}
expressed in terms of the corresponding multi-indices \w{(I,J)} with
\w{I=(i\sb{1},\dotsc,i\sb{k})} and \w{J=(j\sb{1},\dotsc,j\sb{\ell})} for
with \w{i\sb{p}=\sigma'\sb{p}-1} and \w{j\sb{q}=\sigma''\sb{q}-1}
(which we may think of as a partition of \w[).]{\{0,1,\dotsc,n-1\}}
We write \w{|I|:=k} for the cardinality of the underlying set $\uI$ of $I$.

The corresponding iterated degeneracies are
\w{s\sb{I}=s\sb{i\sb{k}}\dotsc s\sb{i\sb{1}}} and
\w[.]{s\sb{J}=s\sb{j\sb{\ell}}\dotsc s\sb{j\sb{1}}}
More generally, for  $I$ a set of $k$ natural numbers, we denote by
\w{\lra{I}=(i\sb{1}<\dotsc<i\sb{k})} the elements of $I$ arranged in ascending
order, and let
\w{s\sb{I}=s\sb{i\sb{k}}s\sb{i\sb{k-1}}\dotsc s\sb{i\sb{2}}s\sb{i\sb{1}}} be the
corresponding iterated degeneracy map. Note that
\begin{myeq}\label{eqaddtodeg}
  \forall\,x\ \left(\exists\,y\ s\sb{I}x=s\sb{j}y\hsm
  \text{if and only if}\hsm j\in I\right)~.
\end{myeq}

The permutation corresponding to \w{(I,J)} is denoted by
\w[,]{\gam{I,J}\in\cI{n}{k}} and we abbreviate \w{\sgn{\gam{I,J}}} (the sign of
this permutation) to \w[.]{\sgn{I,J}}
\end{notn}

\begin{mysubsection}{Wedges of spheres}
\label{swp}
Hilton's Theorem (see \cite{HilS}) states that if
\w{\bW=\bigvee\sb{i\in I}~\bS{n\sb{i}}} is a wedge of simply-connected spheres,
then any element in \w{\pi\sb{N}\bW} can be written as a sum of elements of the form
\w{\alpha\sp{\#}\omega(\iota\sb{1},\dotsc,\iota\sb{k})} where
\w{\iota\sb{j}:\bS{n\sb{j}}\hra\bW} is the inclusion of an individual wedge
summand, $\omega$ is an iterated Whitehead product (given explicitly in terms of
a Hall basis for the free Lie algebra on the graded index set $I$), and
\w{\alpha:\bS{N}\to\bS{M}} is a map between individual spheres
(with \w[).]{M-1=\sum\sb{j}\ (n\sb{j}-1)}

\begin{remark}\label{rwhp}
Given \w{\alpha\sb{i}\in\pi\sb{p\sb{i}+1}\bX} in a pointed space $\bX$
with \w{p\sb{i}\geq 1} \wb[,]{i=1,2,3} in this section it will be convenient to
use the loop space grading, with \w[.]{\deg(\alpha\sb{i})=p\sb{i}}
With this convention, the anti-commutativity of the Whitehead product is given by
\begin{myeq}\label{eqanticomm}
[\alpha,\beta]~=~(-1)\sp{(\deg(\alpha)+1)(\deg(\beta)+1)}\cdot[\beta,\alpha]
\end{myeq}
\noindent (see \cite[X, (7.5)]{GWhE}), and the Jacobi identity takes the form:
\begin{myeq}\label{eqjacidentity}
\begin{split}
(-1)\sp{(\deg(\alpha)+1)(\deg(\gamma)+1)}&[[\alpha,\beta],\gamma]~+~
    (-1)\sp{(\deg(\alpha)+1)(\deg(\beta)+1)}[[\beta,\gamma],\alpha]\\
  &\hsp+~(-1)\sp{(\deg(\beta)+1)(\deg(\gamma)+1)}[[\gamma,\alpha],\beta]~=~0
\end{split}
\end{myeq}
\noindent (see \cite[X, (7.14)]{GWhE}).
\end{remark}

We have the following general expression for representing the ordinary Whitehead
product rationally as a value of a higher order operation:

\begin{lemma}\label{lswedge}
Given \w[,]{p,q\geq 2} \w[,]{k\geq\ell\geq 0} and \w[,]{\bX:=\bS{p+k}\vee\bS{q+\ell}}
the Whitehead product \w{\omega=[\iot{p+k},\iot{q+\ell}]} in
\w{\pi\sb{p+q+k+\ell-1}\bX} is represented rationally in filtration \w{n=k+\ell} of
the spectral sequence for
\w[,]{\Wd=(\ostar{\bS{p}}{S\sp{k}})\vee(\ostar{\bS{q}}{S\sp{\ell}})} by
\begin{myeq}\label{eqswedge}
\homeg~:=~\sum\sb{(I,J)\in\hI{n}{k}}\ \sgn{I,J}\cdot
         [s\sb{I}\iot{p}, s\sb{J}\iot{q}]~,
\end{myeq}
\noindent where \w[.]{\|\Wd\|\simeq\bX}
\end{lemma}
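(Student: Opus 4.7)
The plan is to identify $\homeg$ as a Moore cycle in $\cZ{n}\pi\sb{p+q-1}\Wd$ for the BF spectral sequence of \S \ref{shoss} in filtration $n$, and then verify that its image under the edge map to $\pi\sb{p+q+k+\ell-1}\|\Wd\|\cong\pi\sb{p+q+k+\ell-1}\bX$ is $[\iot{p+k},\iot{q+\ell}]$.

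First, I would unwind $\bW\sb{n}$ combinatorially. Since the simplicial spheres $S\sp{k}$ and $S\sp{\ell}$ each have a unique non-basepoint non-degenerate cell (in dimension $k$, resp.\ $\ell$), the $n$-th level of the pointed tensor $\ostar{\bS{p}}{S\sp{k}}$ for $n\ge k$ is a wedge of copies of $\bS{p}$, one for each iterated degeneracy of $\iot{p}$ indexed by a subset of $\{0,\dotsc,n-1\}$ of the appropriate size (and similarly for the second wedge summand). Thus for every shuffle-partition $(I,J)$ indexing $\hI{n}{k}$, the Whitehead product $[s\sb{I}\iot{p},s\sb{J}\iot{q}]$ is a well-defined element of $\pi\sb{p+q-1}\bW\sb{n}$, and $\homeg$ is the signed sum giving the candidate filtration-$n$ representative.

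The main technical step, where I expect the principal obstacle to lie, is verifying that $d\sb{i}\homeg = 0$ for every $i\ge 1$. Using the simplicial identities for $d\sb{i}s\sb{I}$ (which split into cases according to whether or not $i\in\uI$) together with bilinearity of the Whitehead product, each summand $d\sb{i}[s\sb{I}\iot{p},s\sb{J}\iot{q}]$ either vanishes outright (when a face map sends the fundamental class to the basepoint, because a degeneracy is cancelled with nothing to replace it) or reduces to a Whitehead product of lower-dimensional degenerate classes in $\bW\sb{n-1}$. I would then exhibit a sign-reversing involution on the set of surviving non-zero terms, pairing $(I,J)$ with a neighbour obtained by transferring a single index across the partition, so that the signs $\sgn{I,J}$ combine with the anti-commutativity (\ref{eqanticomm}) to produce the required cancellation. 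The restriction $\sigma'\sb{1} = 1$ in (\ref{eqwi}) when $n=2k$ is precisely what avoids fixed points of the $\ZZ/2$-action swapping $I\leftrightarrow J$, so that the pairing has no coincidences.

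Finally, to identify the class represented by $\homeg$ with $[\iot{p+k},\iot{q+\ell}]$, I would compare with a standard shuffle-product realization of $\|\Wd\|\simeq\bS{p+k}\vee\bS{q+\ell}$: the fundamental classes $\iot{p+k}$ and $\iot{q+\ell}$ are represented in filtrations $k$ and $\ell$ by $\iot{p}$ in the dimension-$k$ level of the first summand of $\Wd$ and by $\iot{q}$ in the dimension-$\ell$ level of the second, and the attaching map for the top cell of $\bS{p+k}\times\bS{q+\ell}$ (which by definition is $[\iot{p+k},\iot{q+\ell}]$) has simplicial model given by precisely the same signed shuffle sum. Rationally, Proposition \ref{pnoratho} ensures the BF spectral sequence collapses at $E\sp{2}$, so no higher differentials can interfere with this reading; this is where the rationality hypothesis enters, beyond any residual sign bookkeeping left over from the Moore cycle verification.
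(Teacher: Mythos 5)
Your verification that $\homeg$ is a Moore chain follows the same lines as the paper: the sign-reversing involution given by an adjacent transposition across the partition $(I,J)$ is exactly the paper's Step 1 (quoted from the proof of Proposition 8.18 of \cite{BBSenHS}). Three small points there. First, the sum in (\ref{eqswedge}) runs over $\hI{n}{k}$, which by Notation \ref{nshuffle} is the \emph{full} set of $(k,\ell)$-shuffles, not the restricted set $\wI{n}{k}$ of (\ref{eqwi}); the condition $\sigma'\sb{1}=1$ plays no role in this lemma, since $\iot{p}$ and $\iot{q}$ lie in different wedge summands and the $\ZZ/2$-symmetry you invoke is simply not present. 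Second, besides $d\sb{i}$ for $i\geq 1$ you also need $d\sb{0}\homeg=0$ to get a Moore \emph{cycle}; this is immediate because $d\sb{0}\iot{p}=d\sb{0}\iot{q}=0$ and one of $s\sb{I}$, $s\sb{J}$ omits $s\sb{0}$, but it is not covered by your involution. Third, you need the observation that $[\homeg]$ is not a $d\sp{1}$-boundary (any potential source is a sum of degenerate elements), since otherwise it represents nothing.

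The genuine gap is in your identification step. You assert that the attaching map of the top cell of $\bS{p+k}\times\bS{q+\ell}$ ``has simplicial model given by precisely the same signed shuffle sum,'' but that assertion \emph{is} the content of the lemma, not something available by definition: relating the simplicial filtration of $\|\Wd\|$ to the CW filtration of the product, and showing that the boundary of the top Eilenberg--Zilber shuffle cell is carried to $\homeg$ in filtration $n$, is exactly what would need to be proved, and nothing in your sketch supplies it (nor is there an ``edge map'' evaluating a filtration-$n$ permanent cycle in $\pi\sb{N}\|\Wd\|$ without constructing the total data of Definition \ref{dhoh}). The paper avoids this computation entirely by a structural argument: Hilton's theorem splits the $E\sp{1}$-term into the images of the two summands $\ostar{\bS{p}}{S\sp{k}}$ and $\ostar{\bS{q}}{S\sp{\ell}}$ plus a cross-term; $\omega$ lies in the cross-term of $\pi\sb{N}\bX$, which is rationally of rank one; and $[\homeg]$ is a nonzero permanent cycle in the cross-term (permanent by Proposition \ref{pnoratho}, nonzero because it cannot bound), so it must detect $\omega$ rationally. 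Your geometric route is morally sound and would, if carried out, even give an integral statement, but as written the crucial step is asserted rather than proven, whereas the paper's counting argument closes the loop with what has already been established.
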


\begin{proof}
If we write \w{\Wd':=\ostar{\bS{p}}{S\sp{k}}} and \w[,]{\Wd'':=\ostar{\bS{q}}{S\sp{\ell}}}
then by \cite{HilS}, the inclusions \w{j':\Wd'\hra\Wd} and \w{j'':\Wd''\hra\Wd}
induce split monomorphisms in the \ww{E\sp{1}}-terms of the corresponding
``suspension'' spectral sequences (cf.\ \cite[\S 6]{BBSenHS}), with
\w{\|\Wd'|\simeq\bS{p+k}} and \w[.]{\|\Wd''|\simeq\bS{q+\ell}}
The direct sum \w{\Image(j'\sb{\#})\oplus\Image(j''\sb{\#})} embeds in the spectral
sequence of \w[,]{\Wd} with complement called the \emph{cross-term}
(containing $\homeg$).  Thus $\omega$, which is in the cross-term for
the inclusions of \w{\pis\bS{q+\ell}} and \w{\pis\bS{p+k}} in \w[,]{\pis\bX}
must be represented non-trivially in the cross-term of the spectral sequence.

Each adjacent transposition switching \w{(m,m+1)} between $I$ and $J$ yields a new
\wwb{k,\ell}shuffle \w{(I',J')} with opposite sign but same face \w{d\sb{m+1}} on the
corresponding summand in \wref{eqswedge} (see Step 1 in the proof of
\cite[Proposition 8.18]{BBSenHS}). This shows that $\homeg$ is a Moore $n$-chain.
In fact, $\homeg$ is an $n$-cycle, since \w{d\sb{0}\iot{q}=0=d\sb{0}\iot{p}} and one
of \w{s\sb{I}} and \w{s\sb{J}} must omit \w[.]{s\sb{0}} It cannot be hit by a
\ww{d\sp{1}}-differential, because (as noted above) its source would be a sum of
degenerate elements. Therefore, \w{[\homeg]} represents $\omega$ rationally,
so there can be no other rational permanent cycles contributing to the cross-term in
total degree \w[,]{N=p+q+k+\ell-1} since \w{\pi\sb{N}\bX} is infinite cyclic.
Moreover, by Proposition \ref{pnoratho} there can be no
other classes of infinite order  in the cross-term of the integral \ww{E\sp{2}}-term
in total degree $N$, since the rational spectral sequence collapses.
\end{proof}

\begin{example}\label{egswedge}
For \w{k=\ell=1} we have
\begin{myeq}\label{eqoneone}
\homeg\sb{(1,1)}~=~[s\sb{0}i\sb{p},\,s\sb{1}i\sb{q}]-[s\sb{1}i\sb{p},\,s\sb{0}i\sb{q}]
~,
\end{myeq}
\noindent as in \cite[(7.7)]{BBSenHS}.

For \w{k=2} and \w{\ell=1} we have
\begin{myeq}\label{eqtwoone}
\homeg\sb{(2,1)}~=~[s\sb{1}s\sb{0}i\sb{p},\,s\sb{2}i\sb{q}]~-~
 [s\sb{2}s\sb{0}i\sb{p},\,s\sb{1}i\sb{q}]~+~[s\sb{2}s\sb{1}i\sb{p},\,s\sb{0}i\sb{q}]~.
\end{myeq}
\end{example}

\begin{prop}\label{sivwp}
Some multiple \w{K\cdot[\homeg]} is a permanent cycle in the homotopy spectral sequence
for \w{\Wd} of Lemma \ref{lswedge}, representing the Whitehead product
\w{\omega=[\iot{p+k},\iot{q+\ell}]} integrally.
\end{prop}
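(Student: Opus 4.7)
The plan is to bootstrap from the rational calculation of Lemma~\ref{lswedge} using the collapse statement of Proposition~\ref{pnoratho}. First I would observe that $[\homeg]$ has infinite order in the integral $E\sp{2}$-term of the spectral sequence for $\Wd$: by Lemma~\ref{lswedge} it represents $\omega$ rationally, and by Hilton's theorem $\omega=[\iot{p+k},\iot{q+\ell}]$ generates an infinite cyclic summand of $\pi\sb{p+q+k+\ell-1}\bX$, so $[\homeg]\otimes\QQ\neq 0$. Since $\Wd$ is a simplicial space in which each $\bW\sb{n}$ is a wedge of simply-connected spheres, Corollary~\ref{cnoratho} immediately yields some integer $K$ for which $K\cdot[\homeg]$ is a permanent cycle.

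To make $K$ explicit and controllable, I would unpack Corollary~\ref{cnoratho} via Proposition~\ref{pnoratho}: the integral differentials $d\sb{r}([\homeg])$ are all torsion, because rationalizing them gives $0$. Only finitely many differentials $d\sb{r}$ (for $2\leq r\leq n$) can be nonzero on a class of filtration $n=k+\ell$, so I would inductively define $K\sp{(r)}$ by multiplying through the orders of the successive torsion obstructions $d\sb{r+1}(K\sp{(r)}[\homeg])$, ending with a concrete integer $K:=K\sp{(n)}$ such that $K\cdot[\homeg]$ survives to $E\sp{\infty}$.

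Finally, convergence of the Bousfield--Friedlander spectral sequence to $\pis\bX$ gives a homotopy class $\xi\in\pi\sb{p+q+k+\ell-1}\bX$ of filtration $\geq n$ detected by $K\cdot[\homeg]$. Rationally $\xi$ and $K\omega$ agree by Lemma~\ref{lswedge}, so $\xi-K\omega$ lies in the torsion subgroup of $\pi\sb{p+q+k+\ell-1}\bX$; multiplying $K$ once more by the exponent of this finite torsion subgroup would yield $\xi=K\omega$ integrally, which is the desired integral representation. The main obstacle will be the bookkeeping in the inductive clearing step, since the target bidegree of each $d\sb{r}$ may itself carry integral torsion whose order must be absorbed into $K$ before one can iterate; this stays tractable because at each fixed total degree only finitely many groups $E\sp{r}\sb{s,t}$ arise in a first-quadrant spectral sequence, so the process terminates with a well-defined finite $K$.
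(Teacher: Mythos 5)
Your first paragraph is exactly the paper's proof, which consists of the single sentence ``This follows from Corollary \ref{cnoratho}.'' The remaining two paragraphs (inductively clearing the torsion differentials to pin down $K$, and the convergence/torsion argument showing \w{K\cdot[\homeg]} really detects \w{K\omega} integrally) are correct elaborations of details the paper leaves implicit, so the approach is essentially identical.
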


\begin{proof}
This follows from Corollary \ref{cnoratho}.
\end{proof}

\begin{remark}\label{rivwp}
Note further that \w{K\neq\pm1} only if \w{[\homeg]} itself supports a non-trivial
\ww{d\sb{r}}-differential for some\w[,]{r\geq 2} which hits a cross-term element
surviving to the \ww{E\sp{r}}-term (since we know that the spectral sequences for
\w{\Wd'} and \w{\Wd''} converge to \w{\pis\bS{p+k}} and \w[,]{\pis\bS{q+\ell}}
respectively, and their differentials also appear in the spectral sequence
for \w[).]{\Wd}

By Hilton's Theorem, the cross-term summand of
\w{E\sp{1}\sb{s,t}=\pi\sb{t}\bW\sb{s}}
for \w{\Wd} splits as a direct sum of summands of the form
\w[,]{G\sp{\alpha}\sb{w}(I\bup{1},\dotsc,I\bup{m})} where
\begin{enumerate}
\renewcommand{\labelenumi}{(\roman{enumi})~}
\item \w{w=w\sb{i}(\iot{p},\iot{q})} is an $m$-fold iterated Whitehead product
in \w{\iot{p}} and \w{\iot{q}}  of the form \w{[\dotsc[\iot{p},\iot{q}],\dotsc]}
(in a chosen Hall basis for the free graded Lie algebra generated by
\w{\iot{p}} and \w[);]{\iot{q}}
\item \w{I\bup{1},\dotsc,I\bup{m}} are subsets of
\w[,]{\{0,\dotsc,r-1\}} with
\w{\widehat{w}:=[\dotsc[s\sb{I\bup{1}}\iot{p},s\sb{I\bup{2}}\iot{q}],\dotsc]}
landing in \w[;]{\pi\sb{j}\bW\sb{r}}
\item $\alpha$ is a cyclic generator of \w{\pi\sb{t}\bS{j}} (the identity,
  if \w[),]{t=j} so \w{\alpha\sp{\#}\widehat{w}} is a generator
  of a cyclic summand in \w[.]{\pi\sb{t}\bW\sb{r}}
\end{enumerate}

Recall that we may disregard degenerate elements when calculating the homology of
the chain complex associated to the simplicial abelian group \w{\pi\sb{t}\Wd}
(yielding the \ww{E\sp{2}}-term of the spectral sequence).
We may assume \w[,]{\bigcap\sb{i=1}\sp{m}I\bup{i}=\emptyset} since
otherwise $\widehat{w}$ (and thus \w[)]{\alpha\sp{\#}\widehat{w}} is degenerate.
Furthermore, \w{d\sb{j}\widehat{w}=0} unless each of \w{I\bup{1},\dotsc,I\bup{m}}
is empty or contains $j$ or \w[.]{j-1}

By Proposition \ref{pnoratho}, any nonvanishing rational \w{\Ett{s}{t}} cross-term
elements must survive to the \ww{E\sp{\infty}}-term. However, Hilton's Theorem implies
that the first cross-term element occurs for \w[,]{s=k+\ell} since all others are
compositions of primary operations with $\omega$ of Lemma \ref{lswedge}.
Thus for \w{t\geq p+q-1} and \w[,]{0\leq s<k+\ell} the rational \w{\Ett{s}{t}}
cross-term vanishes, so the alternating sum of the ranks of the non-degenerate
cross-term of \w{\Eot{\ast}{t}} must vanish. This allows us to inductively calculate
the dimension of the subgroup of \ww{d\sb{1}}-cycles in the \w{\Eot{\ast}{t}}
cross-term for \w{t=p+q-1} (and similarly for the degrees of any Hall basis element).

If we can show that each such cycle (and not only some multiple thereof) is a
\ww{d\sb{1}}-boundary, we deduce that the \w{\Ett{s}{t}} cross-term vanishes
\emph{integrally}, too, and thus \w{[\homeg]} itself is a permanent cycle.
We conjecture that this is true.
\end{remark}

\begin{example}\label{egintegralv}
Some evidence for the conjecture is provided by low-dimensional calculations:
the first case where \w{[\homeg]} could support a differential is \w[.]{k=\ell=2}
Since \w{d\sb{j}[s\sb{0}\iot{p},s\sb{1}\iot{q}]=[\iot{p},\iot{q}]} if \w[,]{j=1}
and otherwise is $0$, the same is true after applying \w[,]{\eta\sp{\#}} so
the cross-term of \w{\Ett{p+q}{2}} is zero, and thus \w{[\homeg]}
is a permanent cycle.

If \w[,]{k=\ell=3} the \ww{\partial\sb{4}}-cycles in the cross-term of \w{\Eot{4}{p+q-1}}
(for \w[)]{\partial\sb{n}:=\sum\sb{i=0}\sp{n}(-1)\sp{i}\,d\sb{j}} are generated by
\begin{equation*}
\begin{split}
[s\sb{0}\iot{p},s\sb{3}\iot{q}]=&~
\partial\sb{4}(x)\hsm\text{for}\hsm x=[s\sb{1}s\sb{0}\iot{p},s\sb{4}s\sb{2}\iot{q}])\\
[s\sb{0}\iot{p},s\sb{2}\iot{q}]=&~\partial\sb{4}(y)\hsm\text{for}\hsm
y=[s\sb{1}s\sb{0}\iot{p},s\sb{3}s\sb{2}\iot{q}]\\
[s\sb{1}\iot{p},s\sb{3}\iot{q}]=&~
\partial\sb{4}([s\sb{2}s\sb{0}\iot{p},s\sb{4}s\sb{1}\iot{q}]-x)\\
[s\sb{1}\iot{p},s\sb{2}\iot{q}]+&[s\sb{0}\iot{p},s\sb{1}\iot{q}]
     =\partial\sb{4}([s\sb{2}s\sb{0}\iot{p},s\sb{3}s\sb{1}\iot{q}]-y)\\
[s\sb{1}\iot{p},s\sb{2}\iot{q}]+&[s\sb{1}\iot{p},s\sb{0}\iot{q}]
     =\partial\sb{4}([s\sb{2}s\sb{1}\iot{p},s\sb{3}s\sb{0}\iot{q}])\\
[s\sb{2}\iot{p},s\sb{3}\iot{q}]-&[s\sb{0}\iot{p},s\sb{1}\iot{q}]
     =\partial\sb{4}([s\sb{3}s\sb{0}\iot{p},s\sb{4}s\sb{1}\iot{q}])~,
\end{split}
\end{equation*}
\noindent with analogous expressions when \w{\iot{p}} and \w{\iot{q}} are interchanged,
all are boundaries. The same is true in \w{\Eot{4}{p+q}}
after applying \w[,]{\eta\sp{\#}} so the cross-term of \w{\Ett{4}{p+q}} is zero,
and thus \w{[\homeg]} survives to \w[.]{\Eu{3}{4}{p+q-1}} To see that it is a
permanent cycle, we verify that \w{\Ett{3}{p+q+1}} is also zero:
the possible candidates in \w{\Eot{3}{p+q+1}} are of the form
\w[,]{(\eta\sp{2})\sp{\#}[\iot{p},\iot{q}]} hit by
\w[,]{(\eta\sp{2})\sp{\#}[s\sb{0}\iot{p},s\sb{1}\iot{q}]}
or a triple Whitehead product such as \w[,]{[[\iot{p},\iot{q}],\iot{q}]}
hit by \w[,]{[[s\sb{0}\iot{p},s\sb{1}\iot{q}],s\sb{1}\iot{q}]} and similarly for
\w{\eta\sp{\#}[[\iot{p},\iot{q}],\iot{q}]} or a four-fold Whitehead product.
\end{example}

\begin{corollary}\label{cswedge}
For of any collection \w{\{\bS{n\sb{i}}\}\sb{i=1}\sp{N}} of simply-connected spheres,
and \w[,]{\bX:=\bigvee\sb{i=1}\sp{k}\,\bS{n\sb{i}+k\sb{i}}} by repeated applications
of \wref{eqswedge} we may obtain recursive rational representatives in the
spectral sequence for \w[,]{\Wd:=\bigvee\sb{i=1}\sp{k}\ostar{\bS{n\sb{i}}}{S\sp{k\sb{i}}}}
converging to \w[,]{\pis\bX} of any iterated Whitehead product \w{\omega\in\pis\bX}
on fundamental classes (well-defined up to sign, under suitable assumptions on the
numbers \w{n\sb{i}} and \w[).]{k\sb{i}}
\end{corollary}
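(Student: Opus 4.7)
I would proceed by induction on the number $m$ of bracket operations in the iterated Whitehead product $\omega$ (equivalently, one less than the number of fundamental classes appearing, counted with multiplicity). The base case $m=0$ of a single fundamental class $\iota_{n_i+k_i}$ is represented by $\iot{n_i} \in \pi_{n_i}\bW_{k_i}$ in filtration $k_i$ of the $i$-th wedge summand $\ostar{\bS{n\sb{i}}}{S\sp{k\sb{i}}} \hookrightarrow \Wd$, and the case $m=1$ of a single binary Whitehead product is precisely Lemma \ref{lswedge}.

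For the inductive step, express $\omega = [\omega', \omega'']$ with $\omega'$ and $\omega''$ shorter iterated Whitehead products on sub-wedges $\bX'$ and $\bX''$ of $\bX$, compatible with a fixed Hall basis. By the inductive hypothesis, recursive rational representatives $\widehat{\omega'} \in \pi\sb{\ast}\bW'\sb{r'}$ and $\widehat{\omega''} \in \pi\sb{\ast}\bW''\sb{r''}$ are available, where $\Wd'$ and $\Wd''$ are the sub-simplicial resolutions inside $\Wd$ corresponding to the fundamental classes appearing in $\omega'$ and $\omega''$. Setting $n := r'+r''$, define
$$
\hhomeg~:=~\sum\sb{(I,J)\in\hI{n}{r''}}\ \sgn{I,J}\cdot[s\sb{I}\widehat{\omega'},\,s\sb{J}\widehat{\omega''}]~\in~\pi\sb{\ast}\bW\sb{n},
$$
applying iterated degeneracies to pull back $\widehat{\omega'}$ and $\widehat{\omega''}$ from levels $r'$ and $r''$ to level $n$ inside $\Wd$. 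This is exactly the recursive application of \wref{eqswedge} promised in the statement.

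The verification that $\hhomeg$ is a Moore $n$-cycle follows the shuffle argument in the proof of Lemma \ref{lswedge}: adjacent transpositions of an index between $I$ and $J$ produce two shuffles whose $d\sb{m+1}$-contributions cancel by the opposite signs, while $d\sb{0}\hhomeg = 0$ because either $s\sb{I}$ or $s\sb{J}$ misses $s\sb{0}$ and $d\sb{0}\widehat{\omega'} = 0 = d\sb{0}\widehat{\omega''}$ inductively. To conclude that $[\hhomeg]$ represents $\omega$ rationally, I would invoke the Hilton--Milnor decomposition: $\omega$ generates a rank-one rational summand in the cross-term of $\pi\sb{N}\bX$ (with $N = |\omega'|+|\omega''|-1$) corresponding to its Hall basis element, and $\hhomeg$ lies in the matching summand of the cross-term of $\Eot{n}{N-n}$. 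By Proposition \ref{pnoratho} the rational spectral sequence collapses, so $\hhomeg$ cannot be a rational boundary and must be the (essentially unique) permanent cycle representing $\omega$ rationally.

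The main obstacle is the bookkeeping of signs: different recursive expansions of the same $\omega$ (arising from different bracketings or orderings consistent with a chosen Hall basis) must agree up to an overall sign. This reduces, via careful induction, to the anticommutativity \wref{eqanticomm} and the Jacobi identity \wref{eqjacidentity} applied compatibly with the shuffle signs in \wref{eqswedge}. The ``suitable assumptions on the numbers $n\sb{i}$ and $k\sb{i}$'' are exactly what is needed to guarantee that the Hall basis element matching $\omega$ occupies a one-dimensional rational summand in $\pi\sb{N}\bX$, so that the uniqueness argument pins down $[\hhomeg]$ up to sign; generically this amounts to requiring that no two distinct Hall basis monomials land in the same total degree.
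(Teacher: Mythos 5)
Your proposal is correct and takes essentially the same route as the paper, which offers no separate proof of Corollary~\ref{cswedge} but treats it as immediate from repeated application of \eqref{eqswedge} (carried out in Example~\ref{egcwedge} by composing the induced maps of simplicial resolutions, which yields exactly your substituted shuffle formula). Your added details --- the induction on bracket length, the Moore-cycle verification, and the identification of the class via Hilton's theorem together with Proposition~\ref{pnoratho} --- are precisely the argument of Lemma~\ref{lswedge} propagated through the recursion, so nothing genuinely new is introduced.
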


\begin{example}\label{egswedg}
If \w{\bW'=\bigvee\sb{i\in I}~\bS{n\sb{i}}} is a wedge of (simply connected) spheres,
\w[,]{m\geq 2} and \w[,]{r\geq 1} with \w[,]{\bW=\bW'\vee\bS{m+r}} then for
any Hall basis iterated Whitehead product
\w{\omega(\iot{m+r},\iot{i\sb{1}},\dotsc,\iot{i\sb{k}})}
in \w{\pis\bW} involving \w{\iot{m+r}:\bS{m+r}\hra\bW} and
\w{\iot{i\sb{j}}:\bS{n\sb{i\sb{j}}}\hra\bW'\subseteq\bW} can be represented
(uniquely, up to sign) by an \wwb{r+1}st order homotopy operation for
\w{\Zd:=(\ostar{\bS{m}}{S\sp{r}})\vee\co{\bW'}} (see \S \ref{snac}).
\end{example}

\begin{remark}\label{rswedge}
If a certain fundamental class \w{\iot{m}} is repeated more than once in $\omega$,
add further copies of \w{\bS{m}} to $\bX$ to obtain an iterated product without
repetitions. Postcomposing the higher operation described in Corollary \ref{cswedge}
by suitable fold maps yields the required expression for the original $\omega$.
\end{remark}

\begin{example}\label{egcwedge}
For \w{\bX=\bS{p}\vee\bS{q}\vee\bS{r}} and
\w[,]{\omega=[[\iot{p+1},\iot{q+1}],\iot{r+1}]\in\pi\sb{p+q+r+1}\Sigma\bX}
we see that \wref{eqoneone} defines a map of simplicial spaces
\w[,]{\ostar{\bS{p+q-1}}{S\sp{2}}\to(\ostar{\bS{p}\vee\bS{q})}{S\sp{1}}}
and thus a map
\w[.]{f:(\ostar{\bS{p+q-1}}{S\sp{2}})\vee(\ostar{\bS{r}}{S\sp{1}})\to
\ostar{(\bS{p}\vee\bS{q}\vee\bS{r})}{S\sp{1}}}

Similarly, \wref{eqtwoone} defines a map
\w[,]{g:\ostar{\bS{p+q+r-2}}{S\sp{3}}\to
  (\ostar{\bS{p+q-1}}{S\sp{2}})\vee(\ostar{\bS{r}}{S\sp{1}})} and we may use
the composite \w{f\circ g} to obtain the formula
\begin{myeq}\label{eqtwotwotwo}
\begin{split}
  \homeg=&
        [[s\sb{1}s\sb{0}i\sb{p},s\sb{2}s\sb{0}i\sb{q}],s\sb{2}s\sb{1}i\sb{r}]
      -[[s\sb{1}s\sb{0}i\sb{p},s\sb{2}s\sb{1}i\sb{q}],s\sb{2}s\sb{0}i\sb{r}]
      +[[s\sb{2}s\sb{0}i\sb{p},s\sb{2}s\sb{1}i\sb{q}],s\sb{1}s\sb{0}i\sb{r}]\\
      -&[[s\sb{2}s\sb{0}i\sb{p},s\sb{1}s\sb{0}i\sb{q}],s\sb{2}s\sb{1}i\sb{r}]
      +[[s\sb{2}s\sb{1}i\sb{p},s\sb{1}s\sb{0}i\sb{q}],s\sb{2}s\sb{0}i\sb{r}]
      -[[s\sb{2}s\sb{1}i\sb{p},s\sb{2}s\sb{0}i\sb{q}],s\sb{1}s\sb{0}i\sb{r}]
\end{split}
\end{myeq}
\noindent representing $\omega$ in filtration $3$ of the spectral sequence for
\w{\Wd=\ostar{\bX}{S\sp{1}}} (converging to \w[).]{\pis\Sigma\bX}
This could again be described in terms of threefold shuffles, though it does not
seem worth the trouble of figuring out the signs.
\end{example}
\end{mysubsection}

%
%
\sect{The algebra of higher homotopy operations}
\label{cahho}

We are now in a position to describe how the totality of higher homotopy operations
(based on spheres) are related to each other.

\begin{mysubsection}{A ``higher Hilton Theorem''}
\label{shht}
We think of Hilton's Theorem (\S \ref{swp}) as exhibiting any primary homotopy
operation as a sum of ``regular'' expressions, involving only Whitehead products,
postcomposed with ``sporadic'' elements from the homotopy groups of
individual spheres. Note that this is almost identical with the distinction between
the rational (infinite order) and torsion components.

The higher order analogue of this is not a formal theorem, but rather a procedure which
allows us to separate the ``regular'' component of the collection of
all higher homotopy operations (described in Section \ref{cwp}) from the ``sporadic''
contribution of maps between individual spheres (themselves appearing as values
of higher operations).

For this purpose, let \w{f:\bS{n+m}\to\bX} be a value of an \wwb{n+1}st order homotopy
operation, with initial data \w[:]{\lra{\Wd\to\bX,\hf:\bS{m}\to\bW\sb{n}}}
we may further assume that each \w{\bW\sb{i}} is a wedge of spheres, so
by Hilton's Theorem we may write \w{\hf=\omega\circ h} for
\w[,]{h:\bS{m}\to\bS{k}} with
\w{\omega:\bS{k}\to\bB=\bigvee\sb{j=1}\sp{p}\bS{n\sb{j}}} an
iterated Whitehead product of weight $N$ on the fundamental classes of wedge summands,
so \w[.]{k-1=\sum\sb{i=1}\sp{N}\ (n\sb{j\sb{i}}-1)}
We then have \w[\vsm .]{\bW\sb{n}=\bB\vee\bW\sb{n}'}

Note that for each \w[,]{\ell\geq 1} we can write \w[,]{\bB=\Sigma\sp{\ell}\bA} where
\w[.]{\bA=\bigvee\sb{j=1}\sp{p}\bS{n\sb{j}-\ell}}
Since \w[,]{\sum\sb{i=1}\sp{N}\ (n\sb{j\sb{i}}-\ell-1)=k-N\ell-1}
$\omega$ is represented in the spectral sequence for \w{\Yd=\ostar{\bA}{S\sp{\ell}}}
(with \w[),]{\|\Yd\|\simeq\bB} in filtration \w{r=N\ell} \wwh
that is, in the \w{\Ett{r}{k-r}} term \wh by a corresponding sum of
weight $N$ iterated Whitehead product $\homeg$ on \wwb{r-\ell}fold degeneracies of
fundamental classes of the desuspended wedge summands of \w[,]{\bY\sb{\ell}=\bA} as in
\cite[\S 7.6]{BBSenHS}. In other words, $\omega$ is the value of an
\wwb{r+1}st order operation with initial data
\w[.]{\lra{\ostar{\bA}{S\sp{\ell}}\to\bB,\homeg}}

Replacing \w{\Yd=\ostar{\bA}{S\sp{\ell}}} by \w{T\sp{n}\Yd}
(see \S \ref{snac}) allows us to form the spliced \wwb{r+n}truncated restricted
augmented simplicial space \w{T\sp{n}\Yd\propto\Wd} (which we abbreviate to
\w[),]{\Yd\propto\Wd} as in \S \ref{dsplice}, which we may use to produce a
``partial version'' of $f$, with \w{\hf=\omega\circ h} replaced by $\omega$ alone.

Now for each \w{\ell\geq 1} and \w{r=N\ell} we may similarly represent $h$ in
the spectral sequence for \w{\Zd=\ostar{\bS{k-r}}{S\sp{r}}} as a value of
some \wwb{p+1}st order operation with initial data
\w[,]{\lra{\Zd\to\bS{k},\hh:\bS{m-p}\to\bZ\sb{p}}} with \w[.]{p\geq r}

This describes $f$ as the value of a ``composite higher operation'' with initial data
$$
\lra{\Zd\propto\Yd\propto\Wd,\hh:\bS{m-p}\to\bZ\sb{p}}
$$
\noindent (in the notation of Definition \ref{dsplice}, with the appropriate
iterate of $T$ omitted), and total data \w[.]{H\propto W\propto F} This is described by
Figure \wref[,]{eqcwhpr} where we use our assumption on the resolution model category
to obtain a map of simplicial sets \w{\Zd\to\Yd} (with no need to actually choose the
coherences $W$):

\myqdiag[\label{eqcwhpr}]{
\bS{m-p} \ar@/^1pc/[r]^{0}_{\vdots}  \ar@/_1pc/[r]_{0} \ar[dd]\sb{\hh} &
\hsp 0\ar[dd]\dotsc \ar@<5ex>[dd]^{H}  & 0\ar[dd]\dotsc & 0\ar@<-1ex>[dd]\dotsc &
0\ar[dd]  \ar@/^2pc/[rr]^<<<<<<<{0}  \ar@/_2pc/[rr]_<<<<<{0} \ar[dd] \ar[r] &
(\bS{m}) \ar|(.4){\hole}[dd]_>>>>{(h)} \ar@/^1.2pc/|(.14){\hole}[dddddd]\sp{\hf} &
0 \ar[dddddd] & \cdots\ar[dddddd]\sb{F}  &
0 \ar[dddddd] \ar[r] & (\bS{n+m}) \ar[dddddd]\sp{(f)}\\
&&& & & & & &&&\\
\bZ\sb{p} \ar@/^1pc/[r]^{d_{0}}_{\vdots}\ar@/_1pc/[r]\sb{d\sb{\ell}}
& \hs\bZ\sb{p-1} \dotsc &
\bZ\sb{r}=\bS{k-r}\ar[dd]\sb{\homeg} &0 \dotsc \ar@<-1ex>[dd]\ar@<3ex>[dd]\sb{W}  &
0\ar[dd]\ar[r] &  (\bS{k}) \ar[dd]\sb{(\omega)}  &&\\
&&& & & & & &&&\\
&& \hs\bY\sb{r} \dotsc 
 &\hsl\bY\sb{\ell}=\bA\dotsc
& 0\ar[r] &(\bB) \ar@{^{(}->}[dd]\sb{\inc} & & & &&\\
&&& & & & & &&& \\
&&&&& \bW\sb{n} \ar@/^1pc/[r]^{0}_{\vdots}  \ar@/_1pc/[r]_{0} &\bW\sb{n-1} &
\dotsc &\bW\sb{0} \ar[r]\sp{\vare} & \bX
}

We think of the operation with initial data
\w{\lra{\Zd\to\bS{k},\homeg:\bS{k-r}\to\bY\sb{r}}} and value $\omega$ as being
essentially formal (in the same way that suspension is formally a secondary operation \wh
see \cite[\S 5.3]{BBSenHS}).

The more general situation, where $h$ is a \emph{sum} of compositions with iterated
Whitehead products reduces to the case discussed above by \S \ref{sadd}.
\end{mysubsection}

\begin{remark}\label{rhht}
Using the methods of \S \ref{swp} and Corollary \ref{cswedge} below, we can in fact
display the iterated Whitehead product $\omega$ of weight $N$ as the value of a ``formal''
higher operation of any chosen order \w[,]{r+1} not only one of the form
\w[.]{r=N\ell} This means that we can first express \w{h:\bS{m}\to\bS{k}} as the
value of some \wwb{p+1}st order operation in whatever way we want, and then choose
a compatible order for $\omega$.
\end{remark}

\begin{mysubsection}{Insertion}
\label{sins}
Theorem 7.16 of \cite{BBSenHS} allows the values of previously calculated higher
operations to be used anywhere in the process of representing homotopy classes
in terms of higher operations: this means that we must allow a value to be inserted
anywhere in a diagram such as \wref[.]{diaghho}
Thus we need to explain how to insert maps \w{\alpha:\bS{N}\to\bS{M}}
  between individual spheres which appear as values of a higher operation into a
  simplicial diagram \w[.]{\Wd}

By Remark \ref{rcwr}, we may assume that our (truncated restricted augmented) simplicial
space \w{\Wd\to\bX} has a CW basis \w[,]{(\oW{n})\sb{n=0}\sp{\infty}} and that
\w{(d\sb{j})\rest{\oW{n}}=0} for \w[.]{j\geq 1} Thus we need only consider the face
map \w[.]{d\sb{0}:\oW{n}\to\bW\sb{n-1}} Since \w{\oW{n}} is (weakly equivalent to)
a wedge of spheres, we may further restrict attention to a single wedge summand,
with fundamental class \w{\iot{m+r}:\bS{m+r}\hra\oW{n}\subseteq\bW\sb{n}}
(and \w[),]{\bW\sb{n}=\bS{m+r}\vee\bW'\sb{n}}
such that \w{d\sb{0}\circ\iot{m+r}=g:\bS{m+r}\to\bW\sb{n-1}} is a value
of an \wwb{r+1}-st order homotopy operation, with initial data
\w{\lra{\Yd\to\bW\sb{n-1},\hg:\bS{m}\to\bY\sb{r}}} (and each \w{\bY\sb{i}} a
wedge of spheres). One could further decompose $g$ as an iterated Whitehead product
precomposed with \w[,]{g':\bS{m}\to\bS{k}} as in \S \ref{shht}, but this is not necessary
for the rest of the argument.

Consider a map \w{\bA\to\bW\sb{n}} out of a wedge of spheres. This could be
either part of initial data \w{\lra{\Wd\to\bX,\hf:\bA\to\bW\sb{n}}} for a higher
homotopy operation with value \w[,]{f:\Sigma\sp{n}\bA\to\bX} or
a face map \w[\vsm.]{d\sb{i}:\bW\sb{n+1}\to\bW\sb{n}}

%
%
\noindent\textbf{I.} \ In the first case, we may assume that \w{\bA=\bS{p}} is a single
sphere, with \w{\hf=\omega\circ h} for \w{h:\bS{p}\to\bS{q}} and
\w{\omega:\bS{q}\to\bW\sb{n}} an iterated Whitehead product. We also assume that the
fundamental class \w{\iot{m+r}} is one of the arguments of $\omega$ (otherwise
no modification of the original data for $f$ is needed).

We can use \S \ref{swp} to represent $\omega$ too as the value of a
\wwb{r+1}-st order homotopy operation, with initial data
\w[.]{\lra{\ostar{\bS{q-r}}{S\sp{r}}\to\bS{q},~\homeg:\bS{q-r}\to\bS{m}\vee\bW\sb{n}'}}
Precomposing this with $h$, as described in \S \ref{shht}, yields the following
diagram:

\mytdiag[\label{eqinserthf}]{
  \bS{p-r-\ell} \ar[dddd]_<<<<<<{\hh}
  \ar@/^0.7pc/[r]^{0}\ar@/_0.7pc/[r]\sb{0}\sp{\vdots} & 0 & \dotsc \ar[dddd]\sp{H}
  &  0 \ar[r] & 0 \ar@/^0.7pc/[r]^{0}\ar@/_0.7pc/[r]\sb{0}\sp{\vdots} &
0 & \dotsc & 0\ar[r] & (\bS{p}) \ar[dddd]^<<<<<{(h)} & \\
&&& &&&  &&&\\
&&& &&&  &&&\\
&&& &&&  &&&\\
&&& &&&  &&&\\
\bZ\sb{\ell} \ar@/^0.7pc/[r]^{d_{0}}\ar@/_0.7pc/[r]\sb{d\sb{\ell}}\sp{\vdots} &
\bZ\sb{\ell-1} & \dotsc& \bZ\sb{0}\ar[r] &
(\bS{q-r}) \ar@/_1.7pc/[dddddd]\sb{\homeg}
\ar@/^0.7pc/[r]\sp{0}\ar@/_0.7pc/[r]\sb{0}\sp{\vdots} &
0 & \dotsc \ar[ddddd]\sp{W} & 0\ar[r] & (\bS{q}) \ar@/^2pc/[dddddd]\sp{(\omega)} & \\
&&& &&&  &&&\\
&&& &&&  &&&\\
&&& &&&  &&&\\
&&& &&&  &&&\\
&&&& \bS{m} \ar@/^0.7pc/[r]^{0}\ar@/_0.7pc/[r]\sb{0}\sp{\vdots}
\ar@/_1.5pc/[dddddd]\sb{\hg}&
0 && 0 \ar[r] & (\bS{m+r})\hs \ar@/^1pc/[ddddddddr]\sp{(g)} & \\
&&&& \vee & \vee & \dotsc \ar[ddddddd]\sp{G} & \vee & \vee & \\
&&&& \bW\sb{n}' \ar@/^0.5pc/[r]\sp{=}\ar@/_0.5pc/[r]\sb{=} \ar@/_1.5pc/[ddddddd]\sb{\Id}&
\bW\sb{n}' \ar[dddd] && \bW\sb{n}' \ar[dddd] \ar[r] & (\bW\sb{n}') & \\
&&& &&&  &&&\\
&&& &&&  &&&\\
&&& &&&  &&&\\
&&& &&&  &&&\\
&&&& \bY\sb{r} \ar@/^0.7pc/[r]^{d\sb{0}}\ar@/_0.7pc/[r]\sb{d\sb{r}}\sp{\vdots} &
\bY\sb{r-1} && \bY\sb{0}\ar[rrd]\sp{\vare}\\
&&&& \vee & \vee & \dotsc & \vee &  & (\bW\sb{n-1}) \\
&&&& \bW\sb{n}' \ar@/^0.5pc/[r]\sb{=}\ar@/_0.5pc/[r] &
\bW\sb{n}'\ar[r] & & \bW\sb{n}' \ar[r] & (\bW\sb{n}') \ar[ru]\sb{d\sb{0}} &
}

We then use
$$
\lra{\Ud:=(\Zd\propto([\ostar{\bS{m}}{S\sp{r}}\propto\Yd]\vee\co{\bW\sb{n}'})
  \propto\tau\sb{n-1}\Wd\to\bX,~\hh:\bS{p-r-\ell}\to\bZ\sb{\ell}}
$$
as initial data for the higher homotopy operation in question, as in \S \ref{shht}.

\begin{remark}\label{rdegcons}
Assuming \w[,]{\Wd\in\Sa\sp{\Dop}} we see that
\w{\co{\bW\sb{n}'})\propto\tau\sb{n-1}\Wd\in\Sa\sp{\Dres\op}} is in fact
well-defined, if we think of \w{c(\bW\sb{n}')\sb{k}\subset\bW\sb{k}} (for \w[)]{k>n}
as the image of \w{\bW\sb{n}'\subseteq\bW\sb{n}} under iterations of the
degeneracy \w[,]{s\sb{0}} because \w{d\sb{i}s\sb{0}=\Id} for \w{i=0,1}
and \w{d\sb{i}s\sb{0}=s\sb{0}d\sb{i-1}} for \w[.]{i\geq 2}
\end{remark}

%
%
\noindent\textbf{II.} \ In the second case, we may assume that our map is
\w{\odz{n+1}:\oW{n+1}\to\bW\sb{n}} (see \S \ref{dscwo} and Remark \ref{rcwr}), with
\w{\oW{n+1}\simeq\bigvee\sb{j\in J}\ \bS{p\sb{j}}} and
\w{f\sb{j}:=(\odz{\bW\sb{n+1}})\rest{\bS{p\sb{j}}}} as a sum of elements of the form
\w{\omega\sb{j}\circ h\sb{j}} for \w{h\sb{j}:\bS{p\sb{j}}\to\bS{q\sb{j}}} and
\w{\omega\sb{j}:\bS{q\sb{j}}\to\bW\sb{n}} a suitable iterated Whitehead product
(which we may assume to involve \w[,]{\iot{m+r}} as above).

Write \w{\bB:=\bigvee\sb{j\in J}\ \bS{p\sb{j}-r}} for the $r$-fold desupsension of
\w[,]{\oW{n+1}} and note that we can present each \w{h\sb{j}}
as the value of an \wwb{\ell\sb{j}+1}st order operation with initial data
\w[.]{\lra{\Zud{j}\to\bS{q\sb{j}-r},
    \hh\sb{j}:\bS{p\sb{j}-r-\ell\sb{j}}\to\bZ\up{j}\sb{\ell\sb{j}}}}
Thus \w{\Zd:=\bigvee\sb{j\in J}\ \Zud{j}} is augmented to $\bB$.

The case where different maps \w{f\sb{j}} involve the same Hall basis element
\w{\omega:\bS{q}\to\bW\sb{n}} are treated similarly.

We then obtain a slightly more elaborate form of \wref[,]{eqinserthf} namely,
\wref[:]{eqinsertface}

\mytdiag[\label{eqinsertface}]{
  \bS{p\sb{j}-r-\ell\sb{j}} \ar@/_1.5pc/[dddddd]_<<<<<<{\hh\sb{j}}
  \ar@/^0.7pc/[r]^{0}\ar@/_0.7pc/[r]\sb{0}\sp{\vdots} & 0 &  &  0 \ar[r] &
0 \ar@/^0.7pc/[r]^{0}\ar@/_0.7pc/[r]\sb{0}\sp{\vdots} &
0 & & 0\ar[r] & (\bS{p})
\ar@/^2pc/[ddddddd]^<<<<<{(h\sb{j})} & \\
 \vee & \vee & \dotsc \ar[ddddddd]\sp{H} & \vee & \vee & \vee & \dotsc &\vee & &\\
 \bW\sb{n}' \ar@/^0.5pc/[r]\sp{=}\ar@/_0.5pc/[r]\sb{=}
 \ar@/_1.8pc/[ddddddd]\sb{\Id} & \bW\sb{n}' \ar[ddddd] && \bW\sb{n}' \ar[r] &
 (\bW\sb{n}') \ar@/^0.5pc/[r]\sb{=}\ar@/_0.5pc/[r] &
\bW\sb{n}' & & \bW\sb{n}' \ar[r] & (\bW\sb{n}') \ar@/^2pc/[ddddddd]\sp{(\Id)} & \\
&&& &&&  &&&\\
&&& &&&  &&&\\
&&& &&&  &&&\\
&&& &&&  &&&\\
\uZ{j}\sb{\ell\sb{j}} \ar@/^0.7pc/[r]^{d_{0}}\ar@/_0.7pc/[r]\sb{d\sb{\ell}}\sp{\vdots} &
\uZ{j}\sb{\ell\sb{j}-1} & & \uZ{j}\sb{0}\ar[r] &
(\bS{q\sb{j}-r}) \ar@/_2.2pc/[dddddddd]^>>>>>>{\homeg\sb{j}}
\ar@/^0.7pc/[r]\sp{0}\ar@/_0.7pc/[r]\sb{0}\sp{\vdots} &
0 &  & 0\ar[r] & (\bS{q\sb{j}}) \ar@/^4pc/[dddddddd]\sp{(\omega\sb{j})} & \\
 \vee & \vee & \dotsc & \vee & \vee & \vee & \dotsc\ar[ddddddd]\sp{W} &\vee &\vee & \\
 \bW\sb{n}' \ar@/^0.5pc/[r]\sp{=}\ar@/_0.5pc/[r]\sb{=} &
 \bW\sb{n}'  && \bW\sb{n}'\ar|(.3){\hole}[r] &
 (\bW\sb{n}') \ar@/^0.5pc/[r]\sb{=}\ar@/_0.5pc/[r] \ar@/_2.2pc/[ddddddd]_>>>>>>{\Id}&
\bW\sb{n}'  & & \bW\sb{n}' \ar[r] & (\bW\sb{n}') \ar@/^3pc/[ddddddd]_<<<<<{(\Id)} & \\
&&& &&&  &&&\\
&&& &&&  &&&\\
&&& &&&  &&&\\
&&& &&&  &&&\\
&&&& \bS{m} \ar@/^0.7pc/[r]^{0}\ar@/_0.7pc/[r]\sb{0}\sp{\vdots}
\ar@/_1.5pc/[dddddd]\sb{\hg}&
0 && 0 \ar[r] & (\bS{m+r}) \ar@/^1pc/[ddddddddr]\sp{(g)} & \\
&&&& \vee & \vee & \dotsc \ar[ddddddd]\sp{G} & \vee & \vee & \\
&&&& \bW\sb{n}' \ar@/^0.5pc/[r]\sp{=}\ar@/_0.5pc/[r]\sb{=} \ar@/_1.5pc/[ddddddd]\sb{\Id}&
\bW\sb{n}' \ar[dddd] && \bW\sb{n}' \ar[dddd] \ar[r] & (\bW\sb{n}') & \\
&&& &&&  &&&\\
&&& &&&  &&&\\
&&& &&&  &&&\\
&&& &&&  &&&\\
&&&& \bY\sb{r} \ar@/^0.7pc/[r]^{d\sb{0}}\ar@/_0.7pc/[r]\sb{d\sb{r}}\sp{\vdots} &
\bY\sb{r-1} && \bY\sb{0}\ar[rrd]\sp{\vare}\\
&&&& \vee & \vee & \dotsc & \vee &  & (\bW\sb{n-1}) \\
&&&& \bW\sb{n}' \ar@/^0.5pc/[r]\sb{=}\ar@/_0.5pc/[r] &
\bW\sb{n}'\ar[r] & & \bW\sb{n}' \ar[r] & (\bW\sb{n}') \ar[ru]\sb{d\sb{0}} &
}

We want to replace the original \wwb{n+1}truncated \w{\tau\sb{n+1}\Wd\to\bX} by a new
augmented simplicial object
\begin{myeq}\label{eqlongertr}
  \Vd~:=~([\bigvee\sb{j\in J}\ostar{\bS{p\sb{j}-r-\ell\sb{j}}}{S\sp{r+\ell\sb{j}}}
    \propto\Zd\propto\Yd]\vee\co{\bW\sb{n}'})\propto\tau\sb{n-1}\Wd\to\bX~.
\end{myeq}
\end{mysubsection}

\begin{remark}\label{rtrunc}
On the face of it, \w{\Vd} is not truncated.  However, if $\bX$ is of finite type,
by \cite[\S 2]{BJTurnHH} we can choose our resolution \w{\Wd\to\bX} to be such, too,
which means that each \w{\bW\sb{n}} is a \emph{finite} wedge of spheres.  In this case,
\w{\Vd} of \wref{eqlongertr} is in fact finitely truncated.
\end{remark}

%
%
\sect{Higher order Whitehead products}
\label{chowp}
A well-known infinite sequence of non-trivial higher homotopy operations is
provided by the higher Whitehead products (see \cite{HardHW,AArkS}).
These may actually be defined for any collection of suspensions, as in \cite{GPorW},
and further generalized by the \emph{polyhedral products} of Buchstaber and Panov
(see \cite{BPanoTAT}), but for our purposes we shall make do with the following

\begin{defn}\label{duhwp}
Given a finite collection \w{\vS=(\bS{n\sb{i}})\sb{i=1}\sp{m}} of simply-connected
pointed spheres, for each \w{0\leq k<m} we define the \emph{$k$-th order (fat) wedge}
of $\vS$ to be
$$
T\sb{k}(\vS):=
\{(x\sb{1},\dotsc,x\sb{m})\in\prod\sb{i=1}\sp{m}\ \bS{n\sb{i}}|\
\exists 1\leq i\sb{1}<\dotsc<i\sb{k}\leq m,\  x\sb{i\sb{j}}=\ast\hsm
\text{for\ all\hsm} 1\leq j\leq k\}~.
$$
\noindent Thus \w{T\sb{m-1}(\vS)} is the wedge
\w[,]{\bigvee\sb{i=1}\sp{m}\ \bS{n\sb{i}}} \w{T\sb{1}(\vS)} is the usual fat wedge,
while \w{T\sb{0}(\vS)} is the product \w[.]{\prod\sb{i=1}\sp{m}\ \bS{n\sb{i}}}

We have inclusions \w[,]{T\sb{m-1}(\vS)\subset\dotsc T\sb{1}(\vS)\subset T\sb{0}(\vS)}
which provide a cell structure for \w[:]{T\sb{0}(\vS)}
if we let \w[,]{N=N\sb{\vS}:=\sum\sb{i=1}\sp{m}\ n\sb{i}} then \w{T\sb{1}(\vS)} is always
the \wwb{n-2}CW skeleton of the product (and
when \w[,]{n\sb{1}=\dotsc=n\sb{m}} all the \w{T\sb{k}(\vS)} are standard skeleta).
Thus we have attaching maps
\w[,]{\varphi\sp{k}\sb{\vS}:\bZ\sb{k}\to T\sb{k}(\vS)}
with \w{\bZ\sb{k}=\bZ\sb{k}(\vS)} a wedge of spheres, such that \w{T\sb{k-1}(\vS)}
is the homotopy cofiber of \w[.]{\varphi\sp{k}\sb{\vS}} The maps \w{\varphi\sp{k}\sb{\vS}}
\wb{1<k<m} are the \emph{universal higher order Whitehead product maps}. See
\cite{GPorHG} for further details.
\end{defn}

\begin{fact}\label{fuhwp}
For each \w[,]{m>k\geq 2} the space \w{\bZ\sb{k}(\vS)} is a wedge of spheres
\w[,]{\bS{N\sb{\vS'}-1}} as \w{\vS'} ranges over all sub-collections of
\w{(\bS{n\sb{i}})\sb{i=1}\sp{m}} with \w{m-k+1} elements, and
on each such summand the map \w{\varphi\sp{k}\sb{\vS}} is just
\w[.]{\varphi\sp{1}\sb{\vS'}:\bS{N\sb{\vS'}-1}\to T\sb{1}(\vS')}
\end{fact}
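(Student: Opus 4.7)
The plan is to establish this by identifying $T_0(\vS)=\prod_{i=1}^m \bS{n_i}$ with its standard product CW structure and then reading off the successive skeletal filtrations $T_{m-1}(\vS)\subset\dotsc\subset T_0(\vS)$ directly.

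First I would fix once and for all the cell decomposition of the product: for each subset $S\subseteq\{1,\dotsc,m\}$ there is a single cell $e_S$ of dimension $N_{\vS(S)}:=\sum_{i\in S}n_i$, obtained as the product of the top cells of $\bS{n_i}$ for $i\in S$ with the basepoints of $\bS{n_i}$ for $i\notin S$. Thus a point of $T_0(\vS)$ lies in $e_S$ precisely when its set of non-basepoint coordinates is exactly $S$. From the definition of $T_k(\vS)$ it follows immediately that
\[
T_k(\vS)~=~\bigcup_{|S|\leq m-k}\,e_S,
\]
and in particular $T_k(\vS)$ is a CW subcomplex of $T_0(\vS)$ for every $k$.

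Next I would describe the attaching of $T_{k-1}(\vS)$ to $T_k(\vS)$. The cells added are precisely those $e_S$ with $|S|=m-k+1$, each of dimension $N_{\vS'}$ for $\vS'=\vS(S):=(\bS{n_i})_{i\in S}$. This immediately produces a wedge of spheres
\[
\bZ\sb{k}(\vS)~\simeq~\bigvee_{\vS'\subseteq\vS,\ |\vS'|=m-k+1}\bS{N\sb{\vS'}-1}
\]
as the domain of the attaching map $\varphi^k_{\vS}$, provided we can identify the attaching map on each summand.

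For the identification, I would restrict attention to a single summand indexed by $S$ with $|S|=m-k+1$. The closure $\overline{e}_S$ sits inside the sub-product $\prod_{i\in S}\bS{n_i}\subseteq T_0(\vS)$, and the attaching map of $e_S$ within this sub-product is by definition $\varphi^1_{\vS'}:\bS{N\sb{\vS'}-1}\to T_1(\vS')$, since $T_1(\vS')$ is the fat wedge of $\vS'$ (the $(N_{\vS'}-2)$-skeleton of the sub-product). Thus it suffices to verify that the image of this attaching map inside $T_k(\vS)$ already lies in $T_1(\vS')$ — and not, a priori, in some larger part of $T_k(\vS)$ made up of cells $e_{S''}$ with $S''\not\subseteq S$. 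But a point of $\partial e_S\subseteq\prod_{i\in S}\bS{n_i}$ has at least one coordinate in $S$ equal to the basepoint and all coordinates outside $S$ equal to the basepoint, so its full set of non-basepoint coordinates is a proper subset of $S$; hence it lies in $T_1(\vS')\subseteq T_k(\vS)$, as required.

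The step I expect to need the most care is the last one, namely checking compatibility of the two filtrations (the product CW filtration and the $T_\bullet$ filtration) and confirming that the restriction of $\varphi^k_{\vS}$ to each wedge summand literally equals $\varphi^1_{\vS'}$ rather than some homotopic variant; this is essentially a bookkeeping matter about where basepoints sit. Once that is in place the two claims of the Fact follow at once.
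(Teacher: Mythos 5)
Your argument is correct, but note that there is nothing in the paper to compare it against: the statement is labelled a \emph{Fact} and is stated without proof, being imported from the classical literature on higher order Whitehead products (Porter, Hardie) cited at the start of the section. Your cellular argument is the standard justification and fills this in completely. Giving each $\bS{n\sb{i}}$ the minimal CW structure, the cells $e\sb{S}$ of the product are indexed by subsets $S\subseteq\{1,\dotsc,m\}$, and the identification $T\sb{k}(\vS)=\bigcup\sb{|S|\leq m-k}e\sb{S}$ is immediate from Definition \ref{duhwp}; passing from $T\sb{k}(\vS)$ to $T\sb{k-1}(\vS)$ then attaches exactly the cells with $|S|=m-k+1$, and your observation that a boundary point of such a cell has all coordinates outside $S$ and at least one coordinate inside $S$ at the basepoint shows that the attaching map factors through $T\sb{1}(\vS')\subseteq T\sb{k}(\vS)$, where it is by definition $\varphi\sp{1}\sb{\vS'}$. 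The one point worth making explicit is the step that lets you replace the disjoint union of attaching spheres by a wedge, as the Fact requires: each attaching map $\partial\bigl(\prod\sb{i\in S}D\sp{n\sb{i}}\bigr)\to T\sb{k}(\vS)$ is pointed (a chosen basepoint of the boundary sphere maps to the basepoint of the product), so attaching all these cells simultaneously is literally the reduced mapping cone of the single pointed map $\bigvee\sb{|S|=m-k+1}\varphi\sp{1}\sb{\vS(S)}$ out of the wedge $\bZ\sb{k}(\vS)$. With that remark your proof is complete.
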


\begin{example}\label{eguhwp}
In particular, \w[,]{\bZ\sb{1}=\bS{N-1}} and
when \w[,]{m=2} \w{\varphi\sp{1}\sb{\vS}} is the ordinary Whitehead product
(see \cite[X, \S 7]{GWhE}).
\end{example}

\begin{defn}\label{dhwp}
Given a pointed space $\bX$ and a finite list of homotopy classes
\w[,]{\valph=(\alpha\sb{i}\in\pi\sb{n\sb{i}}\bX)\sb{i=1}\sp{m}} with each
\w[,]{n\sb{i}\geq 2} we may represent $\valph$ by a map \w[,]{a:T\sb{m-1}(\vS)\to\bX}
where \w[.]{\vS=(\bS{n\sb{i}})\sb{i=1}\sp{m}}
The \emph{$m$-th order Whitehead product} of $\valph$ (in the indexing of \cite{GPorHP})
is the (possibly empty) collection of homotopy classes of the form
\w[,]{[g\circ\varphi\sp{1}\sb{\vS}]\in\pi\sb{N-1}\bX} where
\w{N:=\sum\sb{i=1}\sp{m}\ n\sb{i}} and \w{g:T\sb{1}(\vS)\to\bX} runs over all possible
extension of $a$ to \w[.]{T\sb{1}(\vS)}
\end{defn}

\begin{defn}\label{dksign}
Given a fixed ordered set \w{(p\sb{i})\sb{i=1}\sp{n}} of $n$ integers with each
\w[,]{p\sb{i}\geq 1} the \emph{Koszul sign} \w{\gsn{\sigma}} of a permutation
\w{\sigma\in S\sb{n}} is defined recursively by setting \w[,]{\gsn{\Id}=1} and
requiring that each adjacent transposition \w{(i,j)} contributes a factor of
\w{(-1)\sp{p\sb{i}p\sb{j}+1}} (thus combining the graded and ungraded signs).
\end{defn}

\begin{remark}\label{rksign}
  Given \w{(p\sb{i})\sb{i=1}\sp{n}} as in \S \ref{dksign}, for any
  \wwb{k,l}shuffle
\w{(\sigma',\sigma'')} with \w{\sigma'=(i\sb{1}<\dotsc<i\sb{k})} and
\w[,]{\sigma''=(j\sb{1}<\dotsc<j\sb{\ell})} write
\w{\deg(\sigma'):=p\sb{i\sb{1}}+\dotsc +p\sb{i\sb{k}}}
and \w[.]{\deg(\sigma''):=p\sb{j\sb{1}}+\dotsc +p\sb{j\sb{\ell}}}
We then have:
\begin{myeq}\label{eqddegswitch}
  \gsn{\sigma',\sigma''}~=~(-1)\sp{\deg(\sigma')\deg(\sigma'')+k\ell}
  \cdot\gsn{\sigma'',\sigma'}~.
\end{myeq}

Now for any two disjoint ordered subsets
  \w{\sigma=(\sigma\sb{1},\dotsc,\sigma\sb{k})} and
  \w{\tau=(\tau\sb{1},\dotsc,\tau\sb{\ell})} of \w[,]{\{1,\dotsc, n\}}
denote the ordered subset obtained by rearranging
\w{\{\sigma\sb{1},\dotsc,\sigma\sb{k},\tau\sb{1},\dotsc,\tau\sb{\ell}\}}
in ascending order by \w[.]{\sigma\sqcup\tau} If \wb{\alpha,\beta,\gamma} are
pairwise disjoint sets of \w[,]{\{1,\dotsc, n\}} by considering the transpositions
needed to arrange the concatenation of the three in ascending order, we see that
\begin{myeq}\label{eqtsqcup}
\gsn{\alpha,\beta}\cdot\gsn{\alpha\sqcup\beta,\gamma}~=~\gsn{\alpha,\beta,\gamma}~=~
\gsn{\beta,\gamma}\cdot\gsn{\alpha,\beta\sqcup\gamma}~.
\end{myeq}
\noindent Thus if \w{(\sigma',\sigma'')} is a \wwb{k,l}shuffle and
\w{(\tau',\tau'')} is a further partition of \w{\sigma''} into two disjoint sets
(so \w[),]{\sigma''=\tau'\sqcup\tau''} then
\begin{myeq}\label{eqsqcup}
  \gsn{\sigma'\sqcup\tau',\tau''}~=~\gsn{\sigma',\tau'}\cdot\gsn{\sigma',\sigma''}
  \cdot\gsn{\tau',\tau''}~.
\end{myeq}
\end{remark}

\begin{notn}\label{ngshuffle}
Following \cite[\S 8.11]{BBSenHS}, we extend the notation of
\S \ref{nshuffle} as follows:
\begin{enumerate}
\renewcommand{\labelenumi}{(\roman{enumi})~}
\item If $I$ and $J$ are disjoint subsets of \w[,]{\{0,1,2,\dotsc,N\}} each written in
  ascending order let
\w{\psi:I\sqcup J\to\{1,2,\dotsc,n\}} be an order-preserving
isomorphism, with \w[.]{\sgn{I,J}:=\sgn{\psi[I],\psi[J]}}
\item More generally, if $I$ and $J$ are any two finite sets of non-negative integers,
let \w{I':=I\setminus J} and \w[,]{J':=J\setminus I} and set
\w[.]{\sgn{I,J}:=\sgn{I',J'}}
\end{enumerate}
\end{notn}

\begin{mysubsection}{Properties of $\sgn{I,J}$}
\label{sgshuffle}
We recall from \cite[\S 8]{BBSenHS} a number of basic properties of the sign
of a collection of subsets of \w[:]{\{0,1,2,\dotsc,N\}}

\begin{enumerate}
\renewcommand{\labelenumi}{(\arabic{enumi})~}
\item If \w{(I',J')} is obtained from \w{(I,J)} by switching a
pair of indices between $I$ and $J$, then \w[.]{\sgn{I',J'}=-\sgn{I,J}}
\item If \w{I\setminus J} has cardinality $k$ and \w{J\setminus I}
has cardinality $\ell$, then
\begin{myeq}\label{eqsgncompl}
\sgn{I,J}~=~(-1)\sp{k\cdot\ell}\sgn{J,I}
\end{myeq}
\item For subsets $K$ and $L$ of \w{\{0,\dotsc,n\}} with \w{|K|=k} and
  \w[,]{|L|=\ell} let $\hK$ and $\hL$ denote the subsets of \w{\{1,\dotsc,n+1\}}
  obtained by adding $1$ to each element of $K$ (respectively, $L$), and
  \w[.]{\hhK:=\hK\cup\{0\}} Then
\begin{myeq}\label{eqkllk}
\sgn{\hhK,\hL}~=~\sgn{K,L}~,
\end{myeq}
\noindent and so by \wref{eqsgncompl} and \wref[:]{eqkllk}
\begin{myeq}\label{eqhlhk}
\begin{split}
  \sgn{\hL,\hhK}~=&~(-1)\sp{\ell(k+1)}\sgn{\hhK,\hL}~=~
  (-1)\sp{\ell(k+1)}\sgn{K,L}\\
  =&~(-1)\sp{\ell(k+1)}(-1)\sp{\ell\cdot k}\sgn{L,K}~=~(-1)\sp{\ell}\sgn{L,K}~.
\end{split}
\end{myeq}
\item If \w{M=(m\sb{1},\dotsc,m\sb{k})} is an ordered set of natural numbers (in
  ascending order), denote its underlying set by $\uM$, and conversely. In particular,
  if $\uM$ and $\uN$ are disjoint sets of natural numbers, \w{M\sqcup N=N\sqcup M}
  will denote the disjoint union \w[,]{\uM\sqcup\uN} arranged in ascending order.

  For any decomposition \w[,]{\uM\sqcup\uN\sqcup\uP=\{0,\dotsc,n-1\}} consider the
  corresponding three-fold shuffle \w[.]{(M,N,P)} As in \wref[,]{eqtsqcup} we have
\begin{myeq}\label{eqsgndisunl}
\sgn{M,N\sqcup P}\cdot\sgn{N,P}~=~\sgn{M,N,P}~=~\sgn{M\sqcup N,P}\cdot\sgn{M,N}~.
\end{myeq}
\end{enumerate}
\end{mysubsection}

\begin{defn}\label{drhwp}
Assume given an ordered collection \w{\vS=(\bS{p\sb{i}+1})\sb{i=1}\sp{n}} of
simply-connected spheres, we define a rational simplicial space \w{\Wd=\Wd(\vS)}
by inductively choosing a CW-basis \w[,]{(\oW{m})\sb{m=0}\sp{\infty}} as follows:

For each \wwb{m,\ell} shuffle
\w{(\tau',\tau'')=(\tau'\sb{1},\dotsc,\tau'\sb{m},\tau''\sb{1},\dotsc,\tau''\sb{\ell})}
with \w{m+\ell=n} (or equivalently, for each subset
\begin{myeq}\label{eqvsp}
\vS'~=~(\bS{p\sb{\tau'\sb{1}}+1},\dotsc,\bS{p\sb{\tau'\sb{m}}+1})
\end{myeq}
\noindent of $\vS$ of cardinality \w[),]{1\leq m\leq n}  a rational sphere
\w[,]{\bS{N\sb{\vS'}}\lo{\tau'}} where \w[.]{N\sb{\vS'}:=\deg(\tau')+1}
We denote its fundamental class by
\w[,]{\iol{\tau'}\in\pi\sb{N\sb{\vS'}}\bS{N\sb{\vS'}}\lo{\tau'}}
and set \w[.]{\oW{m-1}:=\bigvee\sb{(\tau',\tau'')\in\cI{n}{m-1}}~
  \bS{N\sb{\vS'}}\lo{\tau'}}

The attaching map \w{\odz{W\sb{m-1}}:\oW{m-1}\to\bW\sb{m-2}} sends \w{\iol{\tau'}} to
\begin{myeq}\label{eqattmap}
  \phi\sb{\vS'}:=
  \sum\sb{k=1}\sp{\lfloor\frac{m}{2}\rfloor}
  \sum\sb{(\sigma',\sigma'')\in\wI{m}{m-k}}\ (-1)\sp{\deg(\sigma')+k}
  \gsn{\sigma',\sigma''}
  \sum\sb{(I,J)\in\hI{m-2}{k-1}}\ \sgn{I,J}
      [s\sb{I}\iol{\sigma'},s\sb{J}\iol{\sigma''}]
\end{myeq}
\noindent in \w[,]{\pi\sb{N\sb{\vS'}}\bW\sb{m-2}} for \w{\wI{m}{m-k}} and
\w{\hI{m-2}{k-1}} as in \S \ref{nshuffle}. We refer to \w{(-1)\sp{\deg(\sigma')+k}}
as the \emph{global sign} for the summand
\w[.]{[s\sb{I}\iol{\sigma'},s\sb{J}\iol{\sigma''}]}
\end{defn}

\begin{example}\label{eghwp}
For \w[,]{\vS=(\bS{p+1},\bS{q+1},\bS{r+1})} we have
\w{\bW\sb{0}=\bS{p+1}\vee\bS{q+1}\vee\bS{r+1}} and
\w[,]{\oW{1}=\bS{p+q+1}\lo{p,q}\vee\bS{p+r+1}\lo{p,r}\vee\bS{q+r+1}\lo{q,r}}
with attaching maps
\w{d\sb{0}(\iota\lo{i,j})=(-1)\sp{i+1}[\iot{i},\iot{j}]} on
\w[,]{\bS{i+j+1}\lo{i,j}} for \w[,]{\{i,j\}\subset\{p,q,r\}}
where \w{\iot{p}} is the fundamental class for \w[,]{\bS{p+1}} and so on.
For \w{\vS'=\vS} and \w{m=n=3} we then have:
\begin{myeq}\label{eqtriplewp}
\begin{split}
&\phi\lo{\vS}~=~(-1)\sp{p+q+1}[\iota\lo{p,q},s\sb{0}\iot{r}]
~+(-1)\sp{p+r+1}\cdot(-1)\sp{qr+1}\cdot[\iota\lo{p,r},s\sb{0}\iot{q}]\\
  &\hspace*{10mm}+(-1)\sp{q+r+1}\cdot(-1)\sp{p(q+r)}\cdot[\iota\lo{q,r},s\sb{0}\iot{p}]
~\in~\pi\sb{p+q+r}\bW\sb{1}~.
%
\end{split}
\end{myeq}
\end{example}

\begin{prop}\label{phwp}
Given a collection \w{\vS=(\bS{p\sb{i}+1})\sb{i=1}\sp{n}} of simply-connected
spheres, the rational simplicial space \w{\Wd=\Wd(\vS)} of \S \ref{drhwp} is
a resolution of the rational fat wedge \w[,]{T\sb{1}(\vS)} and the class
\w{[\phi\sb{\vS}]} of \wref{eqattmap} (for \w{\vS'=\vS} and \w[)]{m=n} represents
the rational $n$-th order Whitehead product for $\vS$ in the homotopy spectral sequence
for \w[.]{\Wd}
\end{prop}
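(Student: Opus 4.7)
The plan is to proceed by induction on $n = |\vS|$. The base case $n = 2$ reduces to Lemma \ref{lswedge} with $k = \ell = 0$, yielding the ordinary Whitehead product. For the inductive step, three essentially independent verifications are required.

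First, for every $\vS' \subseteq \vS$ of cardinality $m$, I check that $\phi_{\vS'}$ of \eqref{eqattmap} lies in $\cZ_{m-2}\Wd$. Since $d_i|_{\oW{m-1}} = 0$ for $i \geq 1$ by \eqref{eqotherzero}, the only content is to compute $d_0 \phi_{\vS'}$ and verify it agrees recursively with the attaching maps $\phi_{\vS''}$ for $(m-1)$-subsets $\vS'' \subset \vS'$. Each summand $[s_I\iol{\sigma'}, s_J\iol{\sigma''}]$ in \eqref{eqattmap} is split according to whether $0 \in I$ or $0 \in J$ (one of these must occur, or the term is degenerate and drops out); then $d_0 s_0 = \Id$ together with $d_0 s_i = s_{i-1} d_0$ for $i \geq 1$ isolates the surviving contribution, after which the graded anti-commutativity \eqref{eqanticomm} and graded Jacobi identity \eqref{eqjacidentity} are applied to reassemble. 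The global sign $(-1)^{\deg(\sigma')+k}$ and Koszul sign $\gsn{\sigma',\sigma''}$ in \eqref{eqattmap} are calibrated so that, via the shuffle-sign identities of \S\ref{sgshuffle} --- especially \eqref{eqddegswitch} and \eqref{eqsgndisunl} --- the required cancellations occur term-by-term.

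Second, I establish $\|\Wd\|_{\mathbb{Q}} \simeq T_1(\vS)_{\mathbb{Q}}$. By Proposition \ref{pnoratho}, the rational spiral spectral sequence for $\Wd$ collapses at $E^2$, so the rational homotopy of $\|\Wd\|$ is determined algebraically by the simplicial graded Lie algebra $\pi_*\Wd \otimes \mathbb{Q}$. Via Quillen's rational DGL framework, this corresponds to a free graded Lie algebra on generators $\iol{\tau'}$, indexed by non-empty proper subsets $\tau' \subset \{1,\dotsc,n\}$, with differential on $\iol{\tau'}$ (for $|\tau'| \geq 2$) equal to the Whitehead-bracket expression $\phi_{\vS'}$. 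An induction on $n$ --- using for each strict subset $\vS'\subsetneq\vS$ the inductive identification of $\phi_{\vS'}$ with the universal $|\vS'|$-th order Whitehead product map $\varphi^1_{\vS'}$ --- identifies this DGL with the standard Quillen model of $T_1(\vS)$ built from the cellular filtration $T_{n-1}(\vS)\subset\cdots\subset T_1(\vS)$ of Fact \ref{fuhwp}.

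Finally, for $\vS' = \vS$ and $m = n$, the class $[\phi_{\vS}] \in E^1_{n-2, N_{\vS}}$ lies in total degree $n - 2 + N_{\vS} = N - 1$, with $N := \sum_i(p_i+1)$, matching the degree of the $n$-th order Whitehead product. Under the identification of the previous step, $\phi_{\vS}$ corresponds to the rational universal attaching map $\varphi^1_\vS : \bS{N-1} \to T_1(\vS)$, which by Definition \ref{dhwp} represents the rational $n$-th order Whitehead product of $\vS$. The main obstacle is the first step: the sign-bookkeeping in the Moore cycle verification is delicate, and hinges on the precise interaction between the global signs $(-1)^{\deg(\sigma')+k}$ and the graded Jacobi identity across all summands of \eqref{eqattmap}. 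Once this technical step is settled, the collapse of Proposition \ref{pnoratho} combined with Quillen theory makes the remaining argument largely formal.
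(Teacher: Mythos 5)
Your overall strategy coincides with the paper's: both arguments reduce the proposition to showing that \w{\phi\sb{\vS}} is a Moore cycle, and then use the absence of non-degenerate classes in \w{E\sp{1}\sb{\ast,n-1}} in the relevant total degree together with the rational collapse of Proposition \ref{pnoratho} to conclude that \w{[\phi\sb{\vS}]} is a non-bounding permanent cycle representing the $n$-th order Whitehead product. Your mechanism for the cycle check (split each summand according to whether \w{0\in I} or \w[,]{0\in J} push \w{d\sb{0}} through the degeneracies, reassemble using \wref{eqanticomm} and \wref[)]{eqjacidentity} is exactly the paper's. Your second step \wh identifying \w{\|\Wd\|} rationally with \w{T\sb{1}(\vS)} via Quillen models and the inductive identification of the attaching maps with the universal maps \w{\varphi\sp{1}\sb{\vS'}} of Fact \ref{fuhwp} \wh is actually more explicit than anything in the paper, which leaves this point implicit in the construction; that is a reasonable addition.

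The genuine shortfall is that the entire substance of the paper's proof (its Steps II through IV, roughly two pages of explicit sign computation) is precisely the part you label ``delicate'' and do not carry out. This is not a routine verification: one must compute the coefficient of a term \w{[[s\sb{P}\iol{\sigma},s\sb{Q}\iol{\tau}],s\sb{R}\iol{\rho}]} in \w{d\sb{0}(\phi\sb{\vS})} in two separate regimes, according to whether \w{k\sb{\sigma}+k\sb{\tau}} is at least or at most \w[,]{n/2} because the halving convention \wref{eqwi} for \wwb{k,k}shuffles forces some terms to arise only after a reordering by \wref[;]{eqanticomm} one must also convert signs such as \w{\sgn{\hL,\hhR}} back to \w{\sgn{L,R}} via \wref{eqkllk} and \wref[,]{eqhlhk} and then check that the three Jacobi partners acquire exactly the relative signs \w{(-1)\sp{(\deg(\tau)+\deg(\rho))(\deg(\sigma)+1)}} and \w{(-1)\sp{(\deg(\tau)+1)(\deg(\rho)+1)}} demanded by \wref[.]{eqjacidentity} Until that computation is done, the claim that the global and Koszul signs in \wref{eqattmap} are ``calibrated so that the required cancellations occur'' is a restatement of what must be proved rather than a proof of it.
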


\begin{proof}
Since there are no non-degenerate elements in \w{E\sp{1}\sb{\ast,n-1}} for
\w[,]{\ast\leq N\sb{\vS}} if we show that \w{[\phi\sb{\vS}]\in E\sp{1}\sb{N\sb{\vS},n-2}}
is a Moore cycle, it cannot bound, so we may deduce from Proposition \ref{pnoratho}
that it represents the rational $n$-th order Whitehead product.

We do so by induction on \w[,]{n\geq 2} starting with
\w{\oW{0}=T\sb{n-1}(\vS)=\bigvee\sb{i=1}\sp{n}\ \bS{p\sb{i}+1}}
and \w[,]{\oW{1}=\bigvee\sb{i<j}\ \bS{p\sb{i}+p\sb{j}+1}\lo{i,j}} where
\w{\odz{W\sb{1}}} is given on \w{\bS{p\sb{i}+p\sb{j}+1}\lo{i,j}} by
\w[\vsm.]{[\iot{p\sb{i}+1},\iot{p\sb{j}+1}]}

\noindent\textbf{Step I.}\ First observe that \w{\phi\sb{\vS}} is a Moore
chain: for this purpose, assume that
\begin{myeq}\label{eqaa}
  \hA~=~[s\sb{I}\iol{\sigma'},s\sb{J}\iol{\sigma''}]~
  \in\pi\sb{N\sb{\vS}}\bW\sb{n-2}
\end{myeq}
\noindent is one summand, which appears in \wref{eqattmap} with coefficient
\begin{myeq}\label{eqsgnha}
  (-1)\sp{\deg(\sigma')+k}\gsn{\sigma',\sigma''}\cdot\sgn{I,J}
\end{myeq}
\noindent (this is just a sign \w[,]{\pm 1} of course). Here
\begin{myeq}\label{eqssij}
\begin{split}
  &(\sigma',\sigma'')~\text{is an} \ (n-k,k)\text{-shuffle, with}\ \sigma'\sb{1}=1 \
  \text{if} \ n=2k\\
&(I,J) \ \text{is a} \ (k-1,n-k-1)\text{-partition of}~\{0,\dotsc,n-3\}~.
\end{split}
\end{myeq}

We then must show that \w{d\sb{\ell}(\hA)=0} for each \w[;]{\ell\geq 1} to do so,
consider two cases:

\begin{enumerate}
\renewcommand{\labelenumi}{(\alph{enumi})~}
\item If \w{\ell\in I} and \w[,]{\ell-1\in J} or conversely, let
  \w{(\hat{I},\hat{J})} be obtained from \w{(I,J)} by switching $\ell$ and
  \w[.]{\ell-1} Then for any
\w[,]{(\sigma',\sigma'')\in\cI{n}{n-k}} we have
\w{d\sb{\ell}[s\sb{I}\iol{\sigma'},s\sb{J}\iol{\sigma''}]=
  d\sb{\ell}[s\sb{\hat{I}}\iol{\sigma'},s\sb{\hat{J}}\iol{\sigma''}]}
but \w[,]{\sgn{I,J}=-\sgn{\hat{I},\hat{J}}} so the two expressions
cancel in the sum for \w[.]{d\sb{\ell}\circ\phi\sb{\vS}}
\item If \w[,]{\ell, \ell-1\in J} applying \w{d\sb{\ell}} to
  \w[,]{s\sb{I}=s\sb{i\sb{\ell-1}}\dotsc s\sb{i\sb{t}}\dotsc s\sb{i\sb{1}}}
  where \w{\ell<i\sb{t}} and \w[,]{\ell>i\sb{t-1}-1} yields
$$
  s\sb{i\sb{\ell-1}-1}\dotsc s\sb{i\sb{t}-1}d\sb{\ell}s\sb{i\sb{t-1}}\dotsc
  s\sb{i\sb{1}}
~=~s\sb{i\sb{\ell-1}-1}\dotsc s\sb{i\sb{t}-1}s\sb{i\sb{t-1}}\dotsc s\sb{i\sb{1}}
d\sb{\ell-t+1}~.
$$
Since \w[,]{\ell>t-1} necessarily \w{d\sb{\ell-t+1}} is not \w[,]{d\sb{0}} so
by \wref{eqotherzero} \w[.]{d\sb{\ell-t+1}\iol{\sigma''}=0}

Similarly if \w[\vsm.]{\ell, \ell-1\in I}
\end{enumerate}

To show that \w{\phi\sb{\vS}} is in fact a Moore cycle,
note that for $\hA$ as in \wref[:]{eqaa}
\begin{enumerate}
\renewcommand{\labelenumi}{(\alph{enumi})~}
\item if \w[,]{0\in I}
  \w[,]{d\sb{0}(\hA)=
    [s\sb{I'}\iol{\sigma'},s\sb{J'}d\sb{0}\iol{\sigma''}]} and
\item if \w[,]{0\in J}
  \w[,]{d\sb{0}(\hA)=
    [s\sb{I'}d\sb{0}\iol{\sigma'},s\sb{J'}\iol{\sigma''}]}
\end{enumerate}
\noindent where \w{I'} and \w{J'} are obtained from $I$ and $J$, respectively,
by decreasing all indices by $1$ (and omitting $0$).

By Definition \ref{drhwp} we see that \w{d\sb{0}\iol{\sigma'}} and
\w{d\sb{0}\iol{\sigma''}} are themselves sums of the form
\wref[,]{eqattmap} with $\vS$ replaced by \w{\vS'} or \w{\vS''} (the subset of spheres
appearing in \w{\sigma'} or \w[,]{\sigma''} respectively)\vsm. Therefore, the terms which occur in $d\sb{0} (\phi\sb{\vS})$ are of the form
$$
A~=~[[s\sb{P} \iol{\sigma}, s\sb{Q} \iol{\tau}], s\sb{R} \iol{\rho}]~,
$$
with a sign \w[,]{\pm 1} which we proceed to calculate below, as its coefficient.

We may assume that the number of spheres in $\sigma$ is greater than that in $\tau$.
and if they are equal then \w[.]{1\in \sigma}
We use the following notation
$$
L:= P \cap Q,\hsp M:= P \setminus L,\hsp N= Q \setminus L~,
$$
so that \w[.]{R=M\sqcup N} We also write \w{k\sb{\sigma}} (respectively, \w{k\sb{\tau}} and
\w[)]{k\sb{\rho}} for the cardinality of $\sigma$ (respectively $\tau$ and $\rho$).
It follows that
$$
\#(L)=k\sb{\rho}-1,~\#(M)=k\sb{\tau}-1,~\#(N)=k\sb{\sigma}-1,~\#(R)=n- k\sb{\rho}-2~.
$$

For example, when \w{n=4} and

\begin{myeq}\label{eqquadruplewp}
\begin{split}
%
\phi\sb{\vS}~=~&[\iota\lo{p,q,r},s\sb{1}s\sb{0}\iota\sb{s}]
+(-1)\sp{rs+1}[\iota\lo{p,q,s},s\sb{1}s\sb{0}\iota\sb{r}]
+(-1)\sp{q(r+s)}[\iota\lo{p,r,s},s\sb{1}s\sb{0}\iota\sb{q}]\\
&+(-1)\sp{(p+q+r)s+1}[s\sb{1}s\sb{0}\iota\sb{p},\iota\lo{q,r,s}]
%
+\left([s\sb{0}\iota\lo{p,q},\,s\sb{1}\iota\lo{r,s}]
-[s\sb{1}\iota\lo{p,q},\,s\sb{0}\iota\lo{r,s}]\right)\\
&+(-1)\sp{qr+1}\left([s\sb{0}\iota\lo{p,r},\,s\sb{1}\iota\lo{q,s}]
-[s\sb{1}\iota\lo{p,r},\,s\sb{0}\iota\lo{q,s}]\right)\\
&+(-1)\sp{(q+r)s}\left([s\sb{0}\iota\lo{p,s},\,s\sb{1}\iota\lo{q,r}]
-[s\sb{1}\iota\lo{p,s},\,s\sb{0}\iota\lo{q,r}]\right)~,
\end{split}
\end{myeq}
\noindent after re-arranging the terms we have
\begin{equation*}
\begin{split}
%
d\sb{0}(\phi\lo{\vS})=&\underline{[s\sb{0}\iota\sb{p},[s\sb{0}\iota\sb{q},\iota\lo{r,s}]]}
+(-1)\sp{qr+1}\underset{\dotsc\dotsc\dotsc\dotsc\dotsc\dotsc\dotsc\dotsc\dotsc\dotsc\dotsc}
{[s\sb{0}\iota\sb{p},[s\sb{0}\iota\sb{r},\iota\lo{q,s}]]}\\
+&(-1)\sp{(q+r)s}\underset{------------}
{[s\sb{0}\iota\sb{p},[s\sb{0}\iota\sb{s},\iota\lo{q,r}]]}
+(-1)\sp{pq+1}\underline{[s\sb{0}\iota\sb{q},[s\sb{0}\iota\sb{p},\iota\lo{r,s}]]}\\
+&(-1)\sp{p(q+r)}\underset{\sim\sim\sim\sim\sim\sim\sim\sim\sim\sim\sim}
{[s\sb{0}\iota\sb{q},[s\sb{0}\iota\sb{r},\iota\lo{p,s}]]}
+(-1)\sp{pq+(p+r)s+1}\underset{===========}
{[s\sb{0}\iota\sb{q},[s\sb{0}\iota\sb{s},\iota\lo{p,r}]]}\\
+&(-1)\sp{(p+q)r}
\underset{\dotsc\dotsc\dotsc\dotsc\dotsc\dotsc\dotsc\dotsc\dotsc\dotsc\dotsc}
{[s\sb{0}\iota\sb{r},[s\sb{0}\iota\sb{p},\iota\lo{q,s}]]}
+(-1)\sp{pq+(p+q)r+1}\underset{\sim\sim\sim\sim\sim\sim\sim\sim\sim\sim\sim}
{[s\sb{0}\iota\sb{r},[s\sb{0}\iota\sb{q},\iota\lo{p,s}]]}\\
+&(-1)\sp{(p+q)(r+s)}\underset
{\circ\circ\circ\circ\circ\circ\circ\circ\circ\circ\circ\circ\circ\circ\circ\circ\circ\circ}
{[s\sb{0}\iota\sb{r},[s\sb{0}\iota\sb{s},\iota\lo{p,q}]]}
+(-1)\sp{(p+q+r)s+1}\underset{------------}
{[s\sb{0}\iota\sb{s},[s\sb{0}\iota\sb{p},\iota\lo{q,r}]]}\\
+&(-1)\sp{pq+(p+q+r)s}\underset{===========}
{[s\sb{0}\iota\sb{s},[s\sb{0}\iota\sb{q},\iota\lo{p,r}]]}
+(-1)\sp{(p+q)(r+s)+rs+1}\underset
{\circ\circ\circ\circ\circ\circ\circ\circ\circ\circ\circ\circ\circ\circ\circ\circ\circ\circ}
{[s\sb{0}\iota\sb{s},[s\sb{0}\iota\sb{r},\iota\lo{p,q}]]}\\
%
+&
\underset
{\circ\circ\circ\circ\circ\circ\circ\circ\circ\circ\circ\circ\circ\circ\circ\circ\circ\circ}
{[\iota\lo{p,q},[s\sb{0}\iota\sb{r},s\sb{0}\iota\sb{s}]]}
+(-1)\sp{(p+q)(r+s)}\underline{[\iota\lo{r,s},[s\sb{0}\iota\sb{p},s\sb{0}\iota\sb{q}]]}\\
%
+&(-1)\sp{qr+1}\underset{===========}
{[\iota\lo{p,r},[s\sb{0}\iota\sb{q},s\sb{0}\iota\sb{s}]]}
+(-1)\sp{p(q+s)+rs+1}
\underset{\dotsc\dotsc\dotsc\dotsc\dotsc\dotsc\dotsc\dotsc\dotsc\dotsc\dotsc}
{[\iota\lo{q,s},[s\sb{0}\iota\sb{p},s\sb{0}\iota\sb{r}]]}\\
%
+&(-1)\sp{(q+r)s}\underset{\sim\sim\sim\sim\sim\sim\sim\sim\sim\sim\sim}
{[\iota\lo{p,s},[s\sb{0}\iota\sb{q},s\sb{0}\iota\sb{r}]]}
+(-1)\sp{p(q+r)}\underset{------------}
{[\iota\lo{q,r},[s\sb{0}\iota\sb{p},s\sb{0}\iota\sb{s}]]}~,
\end{split}
\end{equation*}
\noindent which vanishes by applying the Jacobi identity in six groups of three,
as marked\vsm.

\noindent\textbf{Step II.}\ We assume first that \w[,]{k\sb{\sigma} + k\sb{\tau}\geq n/2}
and if equality holds, then \w[.]{1 \in \sigma \sqcup \tau}
This implies that the term $A$ appears in $d\sb{0}(\hA)$ where $\hA$ is given in
\wref{eqaa} with
$$
\sigma' = \sigma \sqcup \tau,~\sigma''=\rho,~J=\hhR,~I=\hL.
$$
The sign of $\hA$ in \w{\phi\sb{\vS}} is
$$
(-1)\sp{k\sb{\rho}+\deg(\sigma)+\deg(\tau)} \gsn{\sigma \sqcup \tau, \rho} \sgn{\hL, \hhR} =
- (-1)\sp{\deg(\sigma) + \deg(\tau)} \gsn{\sigma \sqcup \tau, \rho} \sgn{L,R}.
$$
The sign of $A$ in \w{d\sb{0}(\hA)} is
$$
(-1)\sp{k\sb{\tau}} (-1)\sp{\deg(\sigma)} \gsn{\sigma,\tau} \sgn{M,N}.
$$
Thus the sign of $A$ in $d\sb{0}(\phi\sb{\vS})$ is
$$
-(-1)\sp{k\sb{\tau}+\deg(\tau)}\gsn{\sigma,\tau,\rho}\sgn{L,M,N}~.
$$

\noindent\textbf{Step III.}\ Now assume that for the term $A$,
\w{k\sb{\sigma} + k\sb{\tau}\leq n/2} and in case of equality,
\w[.]{1 \notin \sigma \sqcup \tau}
The term appears after a reordering via \wref{eqanticomm} in \w[,]{d\sb{0}(A')} where
$$
A'= [s_{\hhR} \iol{\rho}, s\sb{\hL} \iota_{\sigma \sqcup \tau}].
$$
The sign of \w{A'} in \w{d\sb{0}(\phi\sb{\vS})} is
$$
(-1)\sp{k\sb{\sigma} + k\sb{\tau}}(-1)\sp{\deg(\rho)}\gsn{\rho,\sigma \sqcup \tau}\sgn{\hhR,\hL}~=~
  (-1)\sp{k\sb{\sigma} + k\sb{\tau}}(-1)\sp{\deg(\rho)}\gsn{\rho,\sigma \sqcup \tau}\sgn{R,L}~.
$$
The sign of \w{[s\sb{R} \iol{\rho}, [s\sb{P}\iol{\sigma}, s\sb{Q} \iol{\tau}]]} in \w{d\sb{0}(A')} is
$$
(-1)\sp{k\sb{\tau}}(-1)\sp{\deg(\sigma)}\gsn{\sigma,\tau}\sgn{M,N}.
$$
The skew symmetry involved in obtaining $A$ has sign
$$
(-1)\sp{(\deg(\rho)+1)(\deg(\sigma)+\deg(\tau)+1)}.
$$
Therefore, the total sign of $A$ in this case is
\begin{equation*}
\begin{split}
  (-1)\sp{k\sb{\sigma} + k\sb{\tau}+\deg(\rho)}&\gsn{\rho,\sigma \sqcup \tau}
  \sgn{R,L}(-1)\sp{k\sb{\tau}+\deg(\sigma)}\\
  &\hs\cdot\gsn{\sigma,\tau}\sgn{M,N}(-1)\sp{(\deg(\rho)+1)(\deg(\sigma)+\deg(\tau)+1)} \\
  =~&-(-1)\sp{k\sb{\sigma} + \deg(\tau)+\deg(\rho)(\deg(\sigma)+\deg(\tau))}
  \gsn{\rho,\sigma,\tau}\sgn{M,N,L}.
\end{split}
\end{equation*}
\noindent\textbf{Step IV.} We assume that
\w[,]{k\sb{\sigma}\geq k\sb{\tau}\geq k\sb{\rho}}
and if equality occurs, $1$ belongs to the subset on the left. Observe that the terms
$$
A~:=~[[s\sb{P}\iol{\sigma}, s\sb{Q} \iol{\tau}], s\sb{R} \iol{\rho}],\hs
B~:=~[[s\sb{Q}\iol{\tau}, s\sb{R} \iol{\rho}], s\sb{P} \iol{\sigma}],
\hs\text{and}\hs C~:=~[[s\sb{P}\iol{\sigma}, s\sb{R} \iol{\rho}], s\sb{Q} \iol{\tau}]~,
$$
together with their signs in \w[,]{d\sb{0}(\phi\sb{\vS})} cancel out via the
Jacobi identity.

There are two different cases to consider:
\begin{enumerate}
\renewcommand{\labelenumi}{(\arabic{enumi})~}
\item \w[,]{k\sb{\sigma} \leq n/2} and in case of equality, we have
  \w[.]{1\notin \sigma}
\item \w[,]{k\sb{\sigma} \geq n/2} and in case of equality, we have \w[.]{1\in \sigma}
\end{enumerate}

In case (1), all the signs are as computed in Step II. Thus,
\begin{equation*}
\begin{split}
\frac{\sgn{B}}{\sgn{A}}~& =~\frac{-(-1)\sp{k\sb{\rho}+\deg(\rho)}\gsn{\tau,\rho,\sigma}
    \sgn{N,L,M} }{-(-1)\sp{k\sb{\tau}+\deg(\tau)}\gsn{\sigma,\tau,\rho}\sgn{L,M,N} } \\
&=~\frac{(-1)\sp{k\sb{\rho} + k\sb{\tau}+\deg(\rho)+\deg(\tau)}\gsn{\tau\sqcup\rho,\sigma}
    \sgn{N,L\sqcup M} }{\gsn{\sigma,\tau\sqcup \rho}\sgn{L\sqcup M,N} }\\
  &=~(-1)\sp{k\sb{\rho} + k\sb{\tau} +\deg(\rho)+\deg(\tau)}
(-1)\sp{(\deg(\tau)+\deg(\rho))(\deg(\sigma))+(k\sb{\tau}+k\sb{\rho})k\sb{\sigma}}
(-1)\sp{(k\sb{\rho}+k\sb{\tau})(k\sb{\sigma} -1)} \\
       &=~(-1)\sp{(\deg(\tau)+\deg(\rho))(\deg(\sigma)+1)}~.
\end{split}
\end{equation*}
\noindent Analogously,
\begin{equation*}
\begin{split}
  \frac{\sgn{C}}{\sgn{A}}~&=~\frac{-(-1)\sp{k\sb{\rho}+\deg(\rho)}
    \gsn{\sigma,\rho,\tau}\sgn{M,L,N}}
       {-(-1)\sp{k\sb{\tau}+\deg(\tau)}\gsn{\sigma,\tau,\rho}\sgn{L,M,N} } \\
 &=~\frac{(-1)\sp{k\sb{\rho} + k\sb{\tau} +\deg(\rho)+\deg(\tau)}\gsn{\rho,\tau}\sgn{M,L}}
       {\gsn{\tau, \rho}\sgn{L, M} }\\
       &=~(-1)\sp{k\sb{\rho} + k\sb{\tau} +\deg(\rho)+\deg(\tau)}
       (-1)\sp{\deg(\tau)\deg(\rho)+k\sb{\tau} k\sb{\rho}}
       (-1)\sp{(k\sb{\rho}-1)(k\sb{\tau} -1)} \\
       &=~(-1)\sp{(\deg(\tau)+1)(\deg(\rho)+1)}~.
\end{split}
\end{equation*}
\noindent It follows that the three terms cancel out by the Jacobi identity.

In the case (2), the signs of $A$ and $C$ are calculated using Step II,
while the sign of $B$ is calculated by Step III. This turns out to be
$$
-(-1)\sp{k\sb{\tau} + \deg(\rho)+\deg(\sigma)(\deg(\tau)+\deg(\rho))}
\gsn{\sigma,\tau,\rho}\sgn{L,M,N}~.
$$
Therefore, once again we have
\begin{equation*}
\begin{split}
  \frac{\sgn{B}}{\sgn{A}} &
  = \frac{-(-1)\sp{k\sb{\tau}+\deg(\rho)+\deg(\sigma)(\deg(\tau)+\deg(\rho))}
 \gsn{\sigma,\tau,\rho}\sgn{L,M,N}}{-(-1)\sp{k\sb{\tau}+\deg(\tau)}\gsn{\sigma,\tau,\rho}
    \sgn{L,M,N}}\\
  &= (-1)\sp{(\deg(\tau)+\deg(\rho))(\deg(\sigma)+1)}~.
\end{split}
\end{equation*}
\noindent This concludes the proof.
\end{proof}

\begin{mysubsection}{An integral version}
\label{sivhwp}
By Corollary \ref{cnoratho}, we can in theory extract from the above an integral
resolution of the fat wedge \w[,]{T\sb{1}(\vS)} in which some multiple of the class
\w{[\phi\sb{\vS}]} represents the universal $n$-th order Whitehead product for $\vS$.
Note, however, that this is an inductive process: for \w[,]{n=3} we can use
\w{[\phi\sb{\vS}]} itself, since there is no room for a non-zero
\ww{d\sb{2}}-differential on it.

For \w[,]{n=4} and \w[,]{\vS=(\bS{p\sb{i}+1})\sb{i=1}\sp{4}} we have
\w[,]{\bW\sb{0}=\bigvee\sb{1\leq i\leq 4}~\bS{p\sb{i}+1}}
\w{\oW{1}=\bigvee\sb{1\leq i<j\leq 4}~\bS{p\sb{i}+p\sb{j}+1}\lo{p\sb{i},p\sb{j}}}
with attaching maps \w{d\sb{0}(\iota\lo{i,j})=\pm[\iot{i},\iot{j}]}
as in Example \ref{eghwp}, and
\w{\oW{2}=
\bigvee\sb{1\leq i<j<k\leq 4}~\bS{p\sb{i}+p\sb{j}+p\sb{k}+1}\lo{p\sb{i},p\sb{j},p\sb{k}}}
with attaching maps given by the appropriate triple Whitehead products, we know that
the fourth-order Whitehead product is represented by $K$ times the class
\w{[\phi\lo{\vS}]\in\pi\sb{N\sb{\vS}}\bW\sb{2}} of \wref[.]{eqattmap} However, we have
\w{K\neq \pm 1} only if the \ww{d\sb{2}}-differential in the homotopy spectral sequence
for \w{\Wd} hits some non-trivial class in \w{\Ett{N\sb{\vS}+1}{0}}
represented by \w[.]{\alpha\in\pi\sb{N\sb{\vS}+1}\bW\sb{0}}

Once we have determined the multiple for
\w[,]{n=4} we must then modify the description in \S \ref{drhwp} accordingly, then
find the appropriate multiple for \w[,]{n=5} and so on.
\end{mysubsection}

%
%
\sect{Lie-Massey products}
\label{clmp}
A more general collection of rational higher order operations is provided by the Lie
analogue of higher Massey products. These can be described in terms of the \w{\Li}
model for rational homotopy theory (see \cite{LStaS,LMarkS}), but we shall use Retakh's
original description for a DG Lie algebra \w{(L\sb{\ast},d)} in \cite{RetaL}
(here with the usual \w[).]{d:L\sb{i}\to L\sb{i-1}}

\begin{mysubsection}{Construction of the Lie-Massey products}
\label{sclmp}
Assume given an ordered set \w{(x\sb{1},\dotsc,x\sb{n})} of cycles in \w[,]{L\sb{\ast}}
such that for each ordered set \w{I=(i\sb{1},i\sb{2},\dotsc,i\sb{m})} with
\w[,]{1\leq i\sb{1}<i\sb{2}<\dotsc<i\sb{m}\leq n} there is an element
\w{x\sb{I}\in L\sb{\ast}} with \w{d(x\sb{I})=\widetilde{x\sb{I}}} for
\begin{myeq}\label{eqlmp}
  \widetilde{x}\sb{I}~:=~\sum\sb{(J,K)}\ (-1)\sp{\vare(J,K)+|x\sb{J}|+1}\cdot
    [x\sb{J},x\sb{K}]~.
\end{myeq}
\noindent Here \w{J=(j\sb{1},j\sb{2},\dotsc,j\sb{s})} and
\w{K=(k\sb{1},k\sb{2},\dotsc,k\sb{t})} are non-intersecting subsequences of $I$
with \w[,]{j\sb{1}<k\sb{1}} and \w{\vare(J,K)} is the sum of the products
\w{(|x\sb{j\sb{p}}|+1)(|x\sb{k\sb{q}}|+1)} with \w{k\sb{q}<j\sb{p}}
(compare \S \ref{dksign}).  We start with
\w{\widetilde{x}\sb{i,j}=(-1)\sp{|x\sb{J}|+1}\cdot[x\sb{i},x\sb{j}]}
  for\w[.]{1\leq i<j\leq n}

The collection of elements \w{\{x\sb{I}\}\sb{I=(i\sb{1},i\sb{2},\dotsc,i\sb{m})}}
for \w{1\leq m<n} is called a \emph{defining system} for the Lie-Massey bracket
\w{[\lra{x\sb{1}},\dotsc,\lra{x\sb{n}}]} on the corresponding homology classes
in \w[,]{H\sb{\ast}(L\sb{\ast},d)} and the cycle \w{\widetilde{x}\sb{I\sb{\bn}}}
for \w{I\sb{\bn}:=(i\sb{1},\dotsc,i\sb{n})} represents the corresponding \emph{value}
of \w{[\lra{x\sb{1}},\dotsc,\lra{x\sb{n}}]} in \w{H\sb{\ast}(L\sb{\ast},d)}
(with indeterminacy resulting from the various choices of defining systems).

Following Oukili's Lie algebra version in \cite{OukiH} of the filtered model for
a DGA (compare \cite{HStaO}), we may replace \w{(L\sb{\ast},d)} by a quasi-isomorphic
bigraded free differential Lie algebra \w[,]{(\hat{L}\sb{\ast\ast},D)} in two steps:
first, we choose a minimal free resolution \w{(\hat{L}\sb{\ast\ast},\hat{d})}
of the coformal DGL
\w[,]{(H\sb{\ast}(L\sb{\ast}),0)} with the new (homological) grading
determined iteratively by choosing suitable splittings over $\QQ$
(see \cite[\S I, Theorem 4]{OukiH}). We then perturb the differential $\hat{d}$ on
\w{\hat{L}\sb{\ast\ast}} into a filtered model \w[,]{(\hat{L}\sb{\ast\ast},D)}
which is a cofibrant replacement for \w{(L\sb{\ast},d)} itself
(see \cite[II, \S 3]{OukiH}).

We may then replace \w{(\hat{L}\sb{\ast\ast},D)} by a simplicial free DGL \w{\Vd}
with the same bigraded set of Lie algebra generators (see
\cite[Proposition 5.13]{BlaR} and compare \cite[\S 4]{QuiR}). In particular, we have
a $\hat{d}$-cycle \w{\widetilde{x}\sb{I\sb{\bn}}} in \w{\hat{L}\sb{n-2,N}}
(for suitable $N$), and since the
quasi-isomorphic \w{(H\sb{\ast}(L\sb{\ast}),0)} is coformal, this cycle must
be hit by a class \w[,]{y\in\hat{L}\sb{n-1,N}} represented by a DGL $N$-sphere
\w{\Sigma\sp{N}\QQ} in \w[.]{V\sb{n-1}} We now take the \wwb{n-1}truncation
of corresponding restricted simplicial object to be \w{\Ud} in \wref{eqlongsimp}
(starting at \w{n-1} rather than \w[,]{n+1} and ending at \w[).]{U\sb{0}}
We may then use the correspondence with \wref{eqerterm} or \wref{diaghho} noted there,
with \w{\Sigma\sp{N}\QQ} as \w{\Sigma\sp{k}A} mapping to the \wwb{n-2}truncation
of \w[.]{\Vd} Note that the internal differentials in each \w{V\sb{n}} are trivial,
since it is a wedge of rational spheres, so by Proposition \ref{pnoratho}
the resulting diagram \wref{diaghho} has a single obstruction to rectification:
namely, the \wwb{n-1}st order operation which is
the Lie-Massey bracket \w[.]{[\lra{x\sb{1}},\dotsc,\lra{x\sb{n}}]}

In principle, one can extract an explicit formula for the
\ww{E\sp{2}}-class \w{\alpha\sb{n}} in the Bousfield-Friedlander spectral sequence
representing \w{[\lra{x\sb{1}},\dotsc,\lra{x\sb{n}}]} for \w{\Wd} constructed as above,
from the previous paragraph, and as before (see \S \ref{sivwp} and
\S \ref{sivhwp}), this allows us to obtain an integral version of the
Lie-Massey bracket, using some multiple of \w{\alpha\sb{n}} in those cases where
the inductive construction does not guarantee that it is itself a permanent cycle.
Again, we shall not attempt to do so here, restricting ourselves to the
following simple example:
\end{mysubsection}

\begin{example}\label{eglmp}
  When \w[,]{n=3} assume given a DGL \w{L\sb{\ast}} having three cycles
  \w[,]{x\sb{p}} \w[,]{x\sb{q}} and \w[,]{x\sb{r}} with \w{x\sb{k}} in degree
  \w{k\in\{p,q,r\}} (which we assume to be odd, for simplicity), ordered
  \w[,]{x\sb{p}\prec x\sb{q}\prec x\sb{r}} and a fourth cycle $z$ in
  degree \w{m=p+q+r+1} (representing the Lie-Massey product).

The free bigraded model \w{\hat{L}\sb{\ast\ast}} for \w{(H\sb{\ast}(L\sb{\ast}),0)}
has classes \w{x\sb{k}\in\hat{L}\sb{0,k}} for each \w[,]{k\in\{p,q,r\}} a class
\w[,]{z\in\hat{L}\sb{0,m}} and classes \w{x\sb{i,j}\in \hat{L}\sb{1,j+k}} with
\w{d(x\sb{j,k})=\widetilde{x}\sb{j,k}:=[x\sb{i},x\sb{j}]} for each pair \w{(i,j)}
with \w[.]{x\sb{p}\preceq x\sb{i}\prec x\sb{j}\preceq x\sb{r}} Note that this
yields a cycle
\begin{myeq}\label{eqslmp}
\widetilde{x}\sb{p,q,r}~=~[x\sb{p},x\sb{q,r}]+[x\sb{p,q},x\sb{r}]+[x\sb{p,r},x\sb{q}]
\in\hat{L}\sb{1,m}~,
\end{myeq}
\noindent which requires a new class \w{x\sb{p,q,r}\in\hat{L}\sb{2,m}} with
\w[.]{d(x\sb{p,q,r})=\widetilde{x}\sb{p,q,r}}

In the filtered model \w{(\hat{L}\sb{\ast\ast},D)} for \w{(L\sb{\ast},d)}
we add to $d$ a perturbation \w[,]{\delta(x\sb{p,q,r})=-z} with \w{D=d+\delta}
(to ensure that $z$ is the value of the secondary Lie-Massey product
\w{[\lra{x\sb{p}},\lra{x\sb{q}},\lra{x\sb{r}}]} represented by
\w[).]{\widetilde{x}\sb{p,q,r}}

The corresponding simplicial DGL \w{\Vd} may be re-interpreted as a simplicial
space \w{\bVd} with each \w{\bV\sb{k}} a wedge of (rational or integral) spheres,
forming a resolution of the space $\bX$ realizing \w{L\sb{\ast}} rationally.
We start with \w{\oV{0}=\widehat{\bV}\sb{0}\vee\bS{m+1}} for
\w{\widehat{\bV}\sb{0}=\bS{p+1}\vee\bS{q+1}\vee\bS{r+1}}
and \w[.]{\oV{1}=\bS{p+q+1}\lo{p,q}\vee\bS{p+r+1}\lo{p,r}\vee\bS{q+r+1}\lo{q,r}}
We denote the fundamental class in \w{\pi\sb{p+1}\bS{p+1}} by \w[,]{\iot{p}} and
so on, and the attaching map \w{d\sb{0}=g} with
\w{g\rest{\bS{i+j+1}\lo{i,j}}=[\iot{i},\iot{j}]}
for \w[.]{\{i,j\}\subset\{p,q,r\}} We have \w{\oV{2}=\bS{m}} (corresponding to
\w[),]{x\sb{p,q,r}} with
\begin{myeq}\label{eqslmpval}
  d\sb{0}(\iota\sb{m})~=~
  \varphi=[s\sb{0}\iot{p},\iota\lo{q,r}]+[\iota\lo{p,q},s\sb{0}\iot{r}]
  +[\iota\lo{p,r},s\sb{0}\iot{q}]
\end{myeq}
\noindent as in \wref[.]{eqtriplewp}

The perturbation \w{D=d+\delta} of the filtered model is encoded as follows: we let
\w[,]{\bV\sb{1}~:=~\oV{1}\vee s\sb{0}\bV\sb{0}\vee C\bS{m}\lo{1}\vee C\bS{m}\lo{2}}
\noindent and similarly \w[,]{\bV\sb{0}:=\oV{0}\vee C\oV{1}} with
\w{d\sb{i}:\oV{2}=\bS{m}\hra C\bS{m}\lo{i}} \wb{i=1,2} and
\w{d\sb{1}:\oV{1}\hra C\oV{1}}  the inclusions into the appropriate cones. However,
\w{d\sb{1}:C\bS{m}\lo{i}\hra\bV\sb{0}} for \w{i=1,2} will be the inclusions into the
two hemispheres of the wedge summand \w[.]{\bS{m+1}}

Since \w[,]{d\sb{0}d\sb{0}=d\sb{0}d\sb{1}} we see that
\w{d\sb{0}:C\bS{m}\lo{1}\to\oV{0}} is a (fixed) null-homotopy $G$ witnessing
the Jacobi identity for \w[,]{(\iot{p},\iot{q},\iot{r})} while
\w{d\sb{0}:C\bS{m}\lo{2}\to\bV\sb{0}:=\oV{0}\vee C\oV{1}} is \w{C\varphi}
as in \wref[.]{eqslmpval} Thus if \w{\vare:\bV\sb{0}\to\bX} is the augmentation
(where \w{L\sb{\ast}} is a rational model for $\bX$), with
\w{\vare\rest{\widehat{\bV}\sb{0}}=f} representing \w[,]{x\sb{p}\bot x\sb{q}\bot x\sb{r}}
then the simplicial identity
\w{\vare d\sb{0} =\vare d\sb{1}} shows that \w{\vare\rest{C\oV{1}}=F} consists of
nullhomotopies for each of the Whitehead products \w[:]{[\iot{i},\iot{j}]}
\mysdiag[\label{eqsimptriplwp}]{
\bS{m} \ar[rr]\sp{d\sb{0}=\varphi} \ar@{_{(}->}[rrd]\sp{d\sb{1}=\inc}
  \ar@{_{(}->}[rrdd]\sb{d\sb{2}=\inc} &&
  s\sb{0}\bV\sb{0}\vee\oV{1} \ar[rr]\sp{d\sb{0}=g}
  \ar@{_{(}->}[rrd]\sb(0.2){d\sb{1}=\inc} && \widehat{\bV}\sb{0} \ar[rrd]\sp{\vare=f} && \\
&& C\bS{m}\lo{1}\ar[rru]\sb(0.8){d\sb{0}=G} \ar@{_{(}->}[rrd]\sp(0.3){d\sb{1}=\inc\sb{1}} &&
C\oV{1} \ar[rr]\sp{\vare=F} &&\bX~.\\
&& C\bS{m}\lo{2} \ar[rru]\sb(0.8){d\sb{0}=C\varphi}
\ar@{_{(}->}[rr]\sb{d\sb{1}=\inc\sb{2}} &&
\bS{m+1} \ar[rru]\sb{\vare=z}
}
\noindent Thus the class $z$ in fact represents the Lie-Massey product
\w{[\lra{x\sb{p}},\lra{x\sb{q}},\lra{x\sb{r}}]} (compare \cite[(1.17)]{BBSenHS}).

Finally, the correspondence of \wref{eqlongsimp} with \wref[,]{diaghho}
with the $1$-truncation of \w{\Vd} in the role of \w{\Ud} and
\w{\varphi:\bS{m}\to\bW\sb{1}} as $\hf$, exhibits this Lie-Massey product as
a secondary operation in our sense.
\end{example}

Although the definition of the Lie-Massey products in \S \ref{sclmp}
resembles that of the higher Whitehead products, the following example shows that
the latter do not determine the former:

\begin{mysubsection}{The example of Buijs and Moreno-Fern\'{a}ndez}
\label{sbmf}
In \cite[\S 2]{BMoreF}, Buijs and Moreno-Fern\'{a}ndez consider the
total space $\bX$ of an \ww{\bS{7}}-bundle over \w[,]{\KZ{2}\times\KZ{4}} with
\w{\Li} model \w{A=\QQ\lra{\bar{x}\sb{1},\bar{y}\sb{3},\bar{z}\sb{6}}}
with \w[.]{\ell\sb{2}(\bar{y},\bar{y})=\ell\sb{3}(\bar{y},\bar{x},\bar{x})=\bar{z}}
The corresponding CDGA model \w{(\Lambda V,\dd)} then has
\w{V=\QQ\lra{\tilde{x}\sb{2},\tilde{y}\sb{4},\tilde{z}\sb{7}}} with
\w{\dd(\tilde{x})=\dd(\tilde{y})=0} and
\w[,]{\dd(\tilde{z})=\tilde{y}\sp{2}+\tilde{y}\tilde{x}\sp{2}} which they use to show
that all higher Whitehead products in \w{\pis(X\sb{\QQ})} vanish
(using \cite[Theorem 4.1]{AlldR2}), even though \w{X\sb{\QQ}} is not coformal
(by \cite[Corollary 4]{BMoreF}).

Applying Quillen's functor $\LL$ (the cobar construction on the vector space dual
\w[,]{\bar{C}\sb{\ast}} whose elements we indicate by $\hat{x}$, \w[,]{\widehat{xy}}
etc.) to \w{(\Lambda V,d)} yields the DGL model
\w{(\LL\sb{s\sp{-1}\bar{C}},d=d\sb{0}+d\sb{1})} for $\bX$, described by\vsm:

\begin{center}
\begin{tabular}{|l||r|r|r|r|l|r|l|}
\hline
degree & 1&2&3&4&5&6&7\\
\hline 
$s\sp{-1}\bar{C}\sb{\ast}$ & $\hat{x}$ & $[\hat{x},\hat{x}]$ &
{\scriptsize$\frac{1}{2}$}$\xla{d\sb{1}}\widehat{x\sp{2}}$ &
$[\widehat{x\sp{2}},\hat{x}]$ &
  $\xla{d\sb{1}}\widehat{x\sp{3}}$ &   $[\hat{y},\hat{y}]$ &
{\scriptsize $\frac{1}{2}$}
\hsn\raisebox{-0.5ex}{\scriptsize{$d\sb{1}$}}\hspace{-1.5mm}$\nwarrow
\hsn \xla{d\sb{1}}\hat{y\sp{2}}$ \\
  &  &  & $\hat{y}$ & $[\hat{y},\hat{x}]$ &
  {\scriptsize $\frac{1}{2}$}$\xla{d\sb{1}}\widehat{yx}$ & $\hat{z}$ &
$\xla{d\sb{0}}\widehat{y\sp{2}+yx\sp{2}}$\\
& & & & & $[[\hat{y},\hat{x}],\hat{x}]$ &
{\scriptsize $\frac{1}{2}$}$\xla{d\sb{1}}[\widehat{yx},\hat{x}]$ &
\hsn\raisebox{1ex}{\scriptsize{$d\sb{1}$}}\hspace{-1.5mm}$\swarrow$ \\
  & & & & & $[[\hat{x},\hat{x}],\hat{y}]$ &
  {\scriptsize$\frac{1}{2}$}$\xla{d\sb{1}}[\widehat{x\sp{2}},\hat{y}]$ &
  {\scriptsize$\frac{1}{2}$}\raisebox{1ex}{\scriptsize{$d\sb{1}$}}\hspace{-1.5mm}$
  \swarrow$ \\
\hline
\end{tabular}
\end{center}

\vsm\quad

\noindent with the \w{d\sb{0}} differential coming from \w[,]{\bar{C}\sb{\ast}}
and the coproduct on \w{\widehat{ab}} in \w{\bar{C}\sb{\ast}} yielding
\w{d\sb{1}(\widehat{ab})=\frac{1}{2}[\hat{a},\hat{b}]} in
\w[.]{\LL\sb{s\sp{-1}\bar{C}}}

In a minimal model DGL model for $\bX$ we may omit $\hat{z}$, which is homologous to
both \w{[\hat{y},\hat{y}]} and the Lie-Massey product \w{\lra{\hat{y},\hat{x},\hat{x}}}
(represented by \w{\alpha=2[\widehat{yx},\hat{x}]-[\widehat{x\sp{2}},\hat{y}]}
in degree $6$, since
\begin{myeq}\label{eqjacidenti}
2[[\hat{y},\hat{x}],\hat{x}]-[[\hat{x},\hat{x}],\hat{y}]=0
\end{myeq}
\noindent by the Jacobi identity).

To interpret this in our language, consider the simplicial rational space (or
the corresponding simplicial coformal DGL) \w[,]{\Wd} with
\w{\bW\sb{0}=\bS{2}\sb{x}\vee\bS{4}\sb{y}} and
\w[.]{\bW\sb{1}=\bS{3}\sb{x\sp{2}}\vee\bS{5}\sb{xy}\vee s\sb{0}\bW\sb{0}}
The non-trivial simplicial face maps are
\w{d\sb{0}(\iot{3}\sb{x\sp{2}})=[\iot{2}\sb{x},\iot{2}\sb{x}]}
and \w[.]{d\sb{0}(\iot{5}\sb{xy})=[\iot{4}\sb{y},\iot{2}\sb{x}]}
We see that
$$
\bar{\alpha}~:=~
2[\iot{5}\sb{xy},s\sb{0}\iot{2}\sb{x}]-[\iot{3}\sb{x\sp{2}},s\sb{0}\iot{4}\sb{y}]~
  \in~\pi\sb{6}\bW\sb{1}
$$
\noindent represents the Lie-Massey product \w{\lra{\hat{y},\hat{x},\hat{x}}} in
\w[,]{\pi\sb{7}\bX\sb{\QQ}} by the discussion in Example \ref{eglmp}.
Moreover, we can deduce from this that the same holds integrally for
\w{\bX':=\|\Wd\|} (whose $7$-th Postnikov section agrees with $\bX$ rationally).
\end{mysubsection}

%
%
\sect{Complex projective spaces}
\label{ccpn}
In \cite[\S 8.6]{BBSenHS}, we showed that for each \w[,]{n\geq 1} there is a rational
simplicial space \w{\Vd} with \w[,]{\|\Vd\|\simeq\CP{n}\sb{\QQ}} having a single
non-degenerate (rational) sphere \w{\bS{k+2}=\bS{k+2}\sb{\QQ}} in \w{\bV\sb{k}} for
each \w[,]{0\leq k<n} where the fundamental class
\w{\iot{k+2}} has \w[.]{d\sb{0}(\iot{k+2})=\gamm{k}\in\pi\sb{k+2}\bV\sb{k-1}}

Here \w[,]{\gamm{1}=\eta\sb{2}} while for \w{n\geq 2}
\begin{myeq}\label{eqgamman}
\gamm{n}~:=~\sum\sb{j=2}\sp{\lfloor\frac{n+3}{2}\rfloor}\ \sum\sb{(I,J)\in\cI{n}{j-2}}\
(-1)\sp{n\cdot j}\sgn{I,J}\cdot[s\sb{I}\iot{n-j+3},\ s\sb{J}\iot{j}]
\end{myeq}
\noindent in \w[.]{\pi\sb{n+2}\bV\sb{n-1}} As an element in \w{\Ett{n-1}{n+2}}
it also represents \w{[g\sb{n}]} as a value of the corresponding $n$-th order
rational homotopy operation in \w[.]{\pi\sb{2n+3}\CP{n+1}}

Specializing Definition \ref{drhwp} to the case \w[,]{\vS=(\bS{2}\lo{i})\sb{i=1}\sp{n+1}}
we may define maps \w{f\sb{k}:\bW\sb{k}\to\bV\sb{k}} inductively as follows\vsm:

\noindent a.\ For each of the $n$ generators \w{\iotl{2}{i}\in\pi\sb{2}\bS{2}\lo{i}} in
\w[,]{\bW\sb{0}} we set \w[\vsm.]{f\sb{0}(\iotl{2}{i})=\iot{2}\in\pi\sb{2}\bS{2}}

\noindent b.\ For each \w{\iotl{3}{i,j}\in\pi\sb{2}\bS{3}\lo{i,j}} in \w[,]{\bW\sb{1}}
set \w[.]{f\sb{1}(\iotl{3}{i,j})=2\iot{3}\in\pi\sb{3}\bS{3}\subset\pi\sb{3}\bV\sb{1}}

\noindent c.\ More generally, for \w{1\leq k\leq m} and
\w{\tau=(\tau\sb{1},\dotsc,\tau\sb{k+1})} an ordered
subset of \w[,]{\{1,\dotsc,m\}} denote the fundamental class in
\w{\pi\sb{k+2}\bS{k+2}\lo{\tau}\subset\pi\sb{k+2}\bW\sb{k}} by
\w{\iotl{k+2}{\tau}} as in \wref[,]{eqattmap} and let
\begin{myeq}\label{eqgammak}
f\sb{k}(\iotl{k+2}{\tau})~=~(-1)\sp{\lfloor\frac{k}{2}\rfloor}\cdot(k+1)!\cdot\iot{k+2}
\end{myeq}
\noindent in \w[.]{\pi\sb{k+2}\bS{k+2}\subseteq\pi\sb{k+2}\bV\sb{k}}

\begin{prop}\label{pcpn}
For fixed \w[,]{n\geq 1} the maps \w{f\sb{k}} above fit together to define
a map of rational simplicial spaces \w[.]{f:\Wd\to\Vd}
\end{prop}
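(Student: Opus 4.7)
The plan is to reduce the verification of all simplicial identities for $f$ to a single algebraic identity on $\pis$ for each CW-basis element, and then to verify that identity by direct expansion. Since $\Wd$ is a CW object with CW basis $\{\oW{k}\}$, the decomposition $\bW_k \cong \oW{k}\amalg L_k\Wd$ together with \eqref{eqotherzero} implies that a compatible family $(f_k\rest{\oW{k}})_{k\ge 0}$ extends uniquely to a simplicial map $f:\Wd\to\Vd$, provided $d_0\circ f_k\rest{\oW{k}} = f_{k-1}\circ d_0\rest{\oW{k}}$ for every $k\ge 1$; all other simplicial identities (involving degeneracies and the face maps $d_i$ for $i\ge 1$) are then automatic from the latching-object description of $L_k\Wd$ and the functoriality of degeneracies in $\Vd$. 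On a basis element $\iota^\tau$ with $|\tau| = k+1$, this reduces to the single identity
\begin{myeq}\label{eqcpnred}
f_{k-1}(\phi_\tau) \;=\; (-1)^{\lfloor k/2\rfloor}\,(k+1)!\,\gamma_k,
\end{myeq}
where $\phi_\tau$ is the attaching map \eqref{eqattmap} with $m=k+1$ and $\gamma_k$ is given by \eqref{eqgamman}.

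To expand the left-hand side of \eqref{eqcpnred}, we use that each $f_{k-1}$ is a based map of spaces, hence additive on $\pis$, takes Whitehead products to Whitehead products, and intertwines the degeneracies in $\Wd$ and $\Vd$. Consequently each summand $[s_I\iota^{\sigma'},s_J\iota^{\sigma''}]$ of $\phi_\tau$ is sent to
\[
 (-1)^{\lfloor(|\sigma'|-1)/2\rfloor + \lfloor(|\sigma''|-1)/2\rfloor}\,|\sigma'|!\,|\sigma''|!\,[s_I\iot{|\sigma'|+1},\,s_J\iot{|\sigma''|+1}].
\]
Because every $p_i=1$ for $\vS=(\bS{2}\lo{i})_{i=1}^{n+1}$, all Koszul signs $\gsn{\sigma',\sigma''}$ in \eqref{eqattmap} are trivial, and the image of a summand depends on $(\sigma',\sigma'')$ only through the pair of cardinalities $(|\sigma'|,|\sigma''|)=(k+1-\ell,\ell)$. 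Grouping by $\ell$ and by $(I,J)\in\hI{k-1}{\ell-1}$, the inner shuffle sum contributes the combinatorial multiplicity $|\wI{k+1}{k+1-\ell}|$ to the single bracket $[s_I\iot{k+2-\ell},s_J\iot{\ell+1}]$.

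In the non-symmetric range $\ell\neq (k+1)/2$, this multiplicity equals $\binom{k+1}{\ell}$, and the elementary identity $\binom{k+1}{\ell}(k+1-\ell)!\,\ell!=(k+1)!$ converts the combined factorial factor into $(k+1)!$. After the substitution $j:=\ell+1$, the brackets on the two sides of \eqref{eqcpnred} match one-for-one, and the claim in this range reduces to a parity identity between the exponents
$k+1+\lfloor(k-j+1)/2\rfloor+\lfloor(j-2)/2\rfloor$ and $\lfloor k/2\rfloor+kj$, which is verified by a short case analysis on $(k,j)$ modulo $2$.

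The main obstacle is the symmetric case $\ell=(k+1)/2$, possible only when $k$ is odd. Here the $\wI$-restriction halves the multiplicity to $\tfrac{1}{2}\binom{k+1}{\ell}$, so the naive count delivers only $\tfrac{1}{2}(k+1)!$ rather than $(k+1)!$, while on the right-hand side of \eqref{eqcpnred} the shuffle sum for $\gamma_k$ runs over the unrestricted $\cI{k-1}{j-2}$. The missing factor of $2$ is recovered by applying the anti-commutativity \eqref{eqanticomm} to the two equal copies of $\iot{(k+3)/2}$ (of common loop degree $(k+1)/2$), together with \eqref{eqsgncompl}: the ordered pair $(I,J)$ and its swap $(J,I)$ then contribute identical signed brackets $\pm\sgn{I,J}\,[s_I\iot{(k+3)/2},s_J\iot{(k+3)/2}]$ on each side. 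Pairing these contributions converts each unordered $\{I,J\}$ into a single combined term whose coefficient on the $\phi_\tau$-side doubles to $(k+1)!$, exactly matching the doubling that appears in $\gamma_k$ from the two shuffle orderings. Once this symmetric bookkeeping is carried out and the same parity identity of the previous paragraph is confirmed (using that all degrees are $(k+1)/2$), equation \eqref{eqcpnred} holds for every $k$ and $\tau$, so $f$ is a simplicial map $\Wd\to\Vd$.
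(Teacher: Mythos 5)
Your proposal is correct and follows essentially the same route as the paper's own proof: reduce everything to the single compatibility $d_0^V\circ f_k=f_{k-1}\circ d_0^W$ on CW-basis classes (all other faces vanishing on both sides), expand $\phi_{\vS'}$ term by term using additivity and naturality of Whitehead products and degeneracies under $f$, match the $\binom{k+1}{\ell}$ shuffle count against the factorials via $\binom{k+1}{\ell}(k+1-\ell)!\,\ell!=(k+1)!$, and verify exactly the parity identity $k+1+\lfloor(k-j+1)/2\rfloor+\lfloor(j-2)/2\rfloor\equiv\lfloor k/2\rfloor+kj\pmod 2$ that the paper checks in its case analysis. Your explicit bookkeeping for the symmetric case $\ell=(k+1)/2$, where the $\wI{k+1}{k+1-\ell}$ restriction halves the shuffle count and the factor of $2$ is recovered by pairing $(I,J)$ with $(J,I)$ via anti-commutativity, is if anything more careful than the paper's general step, which states the count as $\binom{m+2}{k}$ uniformly and only exhibits the halving implicitly in the worked example (\ref{eqftwo}).
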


\begin{proof}
It suffices to show, by induction on \w[,]{1\leq m<n} that
\begin{myeq}\label{eqdgammak}
  d\sb{0}\sp{V}\circ f\sb{k}(\iotl{k+2}{\tau})~=~
  f\sb{k-1}\circ d\sb{0}\sp{W}(\iotl{k+2}{\tau})~,
\end{myeq}
\noindent since all other face maps on \w{\iotl{k+2}{\tau}} and \w{\iot{k+2}} are $0$.

Indeed, for \w{m=1} we have
\begin{myeq}\label{eqdgammaone}
d\sb{0}\sp{V}\circ f\sb{1}(\iotl{3}{i,j})~=~d\sb{0}\sp{V}(2\iot{3})~=~
[f\sb{0}(\iottl{2}{i}),f\sb{0}(\iottl{2}{j})]~=~
f\sb{0}\circ d\sb{0}\sp{W}(\iotl{3}{i,j})~,
\end{myeq}
\noindent since for \w[,]{\CP{1}=\bS{2}}
\begin{myeq}\label{eqtwoeta}
2\eta\sb{2}~=~[\iot{2},\iot{2}]~,
\end{myeq}
\noindent so \w[.]{d\sb{0}\sp{V}(\iot{3})=\gamm{1}=\eta\sb{2}}

Note that for \w{\Wd} as defined above, all Koszul signs are
\w[,]{+1} since all spheres in $\vS$ are $2$-dimensional, while for
\w{{(\sigma',\sigma'')\in\wI{m}{m-k}}} in \wref{eqattmap} for
\w{\phi\sb{\vS'}\in\pis\bW\sb{m-2}}
we have \w[,]{\deg{\sigma'}=m-k} so
\begin{myeq}\label{eqglobal}
\text{the global sign in \w{\Wd} is }\ (-1)\sp{m}~.
\end{myeq}

In the induction step, we must check that \wref{eqdgammak} also
holds for \w[,]{k=m+1} with \w{\iotl{m+3}{\tau}\in\pi\sb{m+3}\bW\sb{m+1}} for $\tau$
a subset of \w{\{1,\dotsc,n+1\}} of cardinality \w[.]{m+2}

Note that \w{d\sb{0}\sp{W}(\iotl{m+3}{\tau})=\phi\sb{\vS'}\in\pi\sb{m+3}\bW\sb{m}}
for  \w[.]{\vS'=(\bS{2}\lo{\tau\sb{1}},\dotsc,\bS{2}\lo{\tau\sb{m+2}})}
By \wref{eqattmap} (with global sign \w[,]{(-1)\sp{m+2}} by \wref[),]{eqglobal}
in the expansion of \w[\vsm:]{\phi\sb{\vS'}}

%
%
\noindent (i)\ There are \w{m+2} summands of the form
\w[,]{[\iotl{m+2}{\widehat{\tau}},s\sb{J}\iotl{2}{\tau\sb{i}}]} where $\widehat{\tau}$
is obtained from $\tau$ by omitting \w[.]{\tau\sb{i}} We assume by that \wref{eqgammak}
holds for \w[,]{k=m} so
$$
f\sb{m}(\iotl{m+2}{\widehat{\tau}})~=~
(-1)\sp{\lfloor\frac{m}{2}\rfloor}\cdot(m+1)!\cdot\iot{m+2}~,
$$
\noindent while \w[,]{f\sb{0}(\iotl{2}{\tau\sb{i}})=\iot{2}} so
\w[.]{f\sb{m}(s\sb{J}\iotl{2}{\tau\sb{i}})=s\sb{J}\iot{2}}
Thus, applying \w{f\sb{m}} to each of these \w{m+2} such summands yields
\w{(-1)\sp{\lfloor\frac{m}{2}\rfloor}\cdot(m+1)!\cdot[\iot{m+2},s\sb{J}\iot{2}]}
\noindent in \w[,]{d\sb{0}\sp{V}(\iot{m+3})=\gamm{m+1}} for a total of
\w[.]{(m+2)!\cdot[\iot{m+2},s\sb{J}\iot{2}]} Note that the latter has global sign
\w{(-1)\sp{(m+1)\cdot 2}=+1} in \wref[.]{eqgamman} The sign coming from
\w{f\sb{m}(\iotl{m+2}{\widehat{\tau}})} is
\begin{myeq}\label{eqsignm}
(-1)\sp{m+2}\cdot(-1)\sp{\lfloor\frac{m}{2}\rfloor}~=~\begin{cases}
(-1)\sp{\frac{m}{2}}&\text{if $m$ is even}\\
(-1)\cdot(-1)\sp{\frac{m-1}{2}}=(-1)\sp{\frac{m+1}{2}} &\text{if $m$ is odd~,}
\end{cases}
\end{myeq}
\noindent so in either case we get \w[,]{(-1)\sp{\lfloor\frac{m+1}{2}\rfloor}}
as required\vsm.

%
%
\noindent (ii)\ There are \w{\binom{m+2}{2}=\frac{(m+2)!}{m!\cdot 2}} summands of the form
\w{[s\sb{I}\iotl{m+1}{\widehat{\tau}},s\sb{J}\iotl{3}{\tau\sb{i},\tau\sb{j}}]}
where $\widehat{\tau}$ is now obtained from $\tau$ by omitting
\w[.]{\{\tau\sb{i},\tau\sb{j}\}} Applying \w{f\sb{m}} to each yields
\w[,]{(-1)\sp{\lfloor\frac{m-1}{2}\rfloor}\cdot m!\cdot
  [s\sb{I}\iot{m+1},2s\sb{J}\iot{3}]} with global sign
\w{(-1)\sp{(m+1)\cdot 3}=(-1)\sp{m+1}} in \wref[.]{eqgamman}
Since there are \w{\binom{m+2}{2}} of them, the total is
\w[,]{(m+2)!\cdot[s\sb{I}\iot{m+1},2s\sb{J}\iot{3}]} with sign
(after multiplication by \w[)]{(-1)\sp{m+1}} equal to:
$$
(-1)\sp{(m+2)+(m+1)}\cdot(-1)\sp{\lfloor\frac{m-1}{2}\rfloor}~=~\begin{cases}
(-1)\cdot(-1)\sp{\frac{m-2}{2}}=(-1)\sp{\frac{m}{2}} &\text{if $m$ is even}\\
(-1)\cdot(-1)\sp{\frac{m-1}{2}}=(-1)\sp{\frac{m+1}{2}}&
  \text{if $m$ is odd,}
\end{cases}
$$
\noindent which again equals \w{(-1)\sp{\lfloor\frac{m+1}{2}\rfloor}} in either case\vsm.

%
%
\noindent (iii)\ In general, there are
\begin{myeq}\label{eqbinom}
\binom{m+2}{k}~=~\frac{(m+2)!}{(m+2-k)!\cdot k!}
\end{myeq}
summands of the form \w[,]{[s\sb{I}\iotl{m+3-k}{\tau'},s\sb{J}\iotl{k+1}{\tau''}]}
where \w[.]{(\tau',\tau'')\in\wI{m+2}{m+2-k}}
Applying \w{f\sb{m}} to each yields
$$
[(-1)\sp{\lfloor\frac{m+1-k}{2}\rfloor}\cdot(m+2-k)!\cdot s\sb{I}\iot{m+3-k},\,
(-1)\sp{\lfloor\frac{k-1}{2}\rfloor}\cdot k!\cdot s\sb{J}\iot{k+1}]~,
$$
\noindent with global sign \w{(-1)\sp{(m+1)\cdot(k+1)}} in \wref[.]{eqgamman}

Multiplying by \wref{eqbinom} yields \w{(m+2)!} summands
\w[,]{[s\sb{I}\iot{m+3-k},\,s\sb{J}\iot{k+1}]} with signs
\begin{equation*}
\begin{split}
(-1)\sp{m+2}&\cdot(-1)\sp{(m+1)\cdot(k+1)}\cdot(-1)\sp{\lfloor\frac{m+1-k}{2}\rfloor}
\cdot(-1)\sp{\lfloor\frac{k-1}{2}\rfloor}=\\
~=&~\begin{cases}
  (-1)\sp{\frac{m-k}{2}+\frac{k-2}{2}+1}=(-1)\sp{\frac{m}{2}}
  =(-1)\sp{\lfloor\frac{m+1}{2}\rfloor} & \text{$m$ and $k$ even}\\
  (-1)\sp{\frac{m-k+1}{2}+\frac{k-1}{2}}=(-1)\sp{\frac{m}{2}}=
  (-1)\sp{\lfloor\frac{m+1}{2}\rfloor} &\text{$m$ even, $k$ odd}\\
  (-1)\cdot(-1)\sp{\frac{m-k}{2}+\frac{k-1}{2}}=(-1)\sp{\frac{m-1}{2}+1}=
(-1)\sp{\frac{m+1}{2}} & \text{$m$ and $k$ odd}\\
(-1)\cdot(-1)\sp{\frac{m+1-k}{2}+\frac{k-2}{2}}=(-1)\sp{\frac{m-1}{2}+1}
=(-1)\sp{\frac{m+1}{2}}
&\text{$m$ odd, $k$ even~,}\\
\end{cases}
\end{split}
\end{equation*}
\noindent which again equals \w{(-1)\sp{\lfloor\frac{m+1}{2}\rfloor}} in all cases.
This completes the induction step, and thus the proof.
\end{proof}

\begin{corollary}\label{cocpn}
For each \w[,]{n\geq 1} \w{(-1)\sp{\lfloor\frac{n}{2}\rfloor}(n+1)!} times the class of
the rational Hopf map \w{g\sb{n}:\bS{2n+1}\to\CP{n}} is the rational \wwb{n+1}st
order Whitehead product of a generator \w{\iol{n}\in\pi\sb{2}\CP{n}} with itself.
\end{corollary}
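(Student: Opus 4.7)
The plan is to invoke Proposition \ref{pcpn} and read the Corollary off from the $k=n$ top level of the inductive identity proved there. Proposition \ref{pcpn} supplies a map of rational simplicial spaces $f:\Wd\to\Vd$ for $\vS=(\bS{2}\lo{i})_{i=1}^{n+1}$, where $\Wd=\Wd(\vS)$ resolves $T_{1}(\vS)_{\QQ}$ (by Proposition \ref{phwp}) and $\Vd$ resolves $\CP{n}_{\QQ}$. On realizations, $|f|:T_{1}(\vS)_{\QQ}\to\CP{n}_{\QQ}$ sends every wedge-summand fundamental class to the generator $\iol{n}=\iot{2}\in\pi_{2}\CP{n}_{\QQ}$, since $f_{0}(\iotl{2}{i})=\iot{2}$. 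Hence $|f|$ provides an admissible extension in the sense of Definition \ref{dhwp}, so $|f|\circ\varphi\sp{1}\sb{\vS}\in\pi_{2n+1}\CP{n}_{\QQ}$ is a value of the rational $(n+1)$-st order Whitehead product of $\iol{n}$ with itself.

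Next, I translate the problem to the spiral spectral sequences. By Proposition \ref{phwp}, the class $[\phi\sb{\vS}]\in\pi_{n+2}\bW\sb{n-1}$ of \wref{eqattmap} represents $[\varphi\sp{1}\sb{\vS}]$ in the spectral sequence for $\Wd$, while the opening of Section \ref{ccpn} identifies $[\gamm{n}]\in\pi_{n+2}\bV\sb{n-1}$ with the Hopf class $[g_{n}]$. Functoriality under $f$ gives that $(f\sb{n-1})\sb{*}[\phi\sb{\vS}]$ represents $|f|\sb{*}[\varphi\sp{1}\sb{\vS}]$ in the spectral sequence for $\Vd$, so the Corollary is equivalent to the single identity
\begin{equation*}
(f\sb{n-1})\sb{*}[\phi\sb{\vS}]~=~(-1)\sp{\lfloor n/2\rfloor}(n+1)!\cdot[\gamm{n}]
\quad\text{in}\quad\pi_{n+2}\bV\sb{n-1}~.
\end{equation*}

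This identity is precisely the $k=n$, $\tau=\vS$ instance of the simplicial compatibility $d\sb{0}\sp{V}\circ f\sb{k}(\iotl{k+2}{\tau})=f\sb{k-1}(d\sb{0}\sp{W}(\iotl{k+2}{\tau}))$ established in the proof of Proposition \ref{pcpn}. Combining $d\sb{0}\sp{W}(\iotl{n+2}{\vS})=\phi\sb{\vS}$, $d\sb{0}\sp{V}(\iot{n+2})=\gamm{n}$, and the formula \wref{eqgammak} giving $f\sb{n}(\iotl{n+2}{\vS})=(-1)\sp{\lfloor n/2\rfloor}(n+1)!\cdot\iot{n+2}$ yields the displayed equality at once. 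The main point worth double-checking is that the sign-cancellation analysis in parts (i)--(iii) of that proof, phrased there for a generic inductive step $m\to m+1$, indeed specializes to the top case $\tau=\vS$ without new summands; this is immediate because the expansion of $\phi\sb{\vS}$ via \wref{eqattmap} decomposes into shuffle blocks of cardinalities $(n+2-k,k)$ whose combinatorics depend only on $\sgn{I,J}$ and $\gsn{\sigma',\sigma''}$, and are entirely uniform in whether $\tau$ is a proper subset or exhausts all of $\{1,\dotsc,n+1\}$.
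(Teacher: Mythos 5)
Your proposal is correct and takes essentially the route the paper intends: Corollary \ref{cocpn} is stated as an immediate consequence of Proposition \ref{pcpn}, and your derivation \wh identifying \w{[\phi\sb{\vS}]} with the universal \wwb{n+1}st order Whitehead product via Proposition \ref{phwp}, \w{[\gamma\sb{n}]} with \w{[g\sb{n}]}, and reading off \w{(f\sb{n-1})\sb{\ast}[\phi\sb{\vS}]=(-1)\sp{\lfloor n/2\rfloor}(n+1)!\,[\gamma\sb{n}]} from the top case of \wref{eqdgammak} together with \wref{eqgammak} \wh is exactly the unpacking the authors leave implicit. Your closing remark that the sign analysis in parts (i)--(iii) is uniform in the cardinality of $\tau$ and hence covers the top case \w{\tau=\{1,\dotsc,n+1\}} is the right point to verify, and it holds.
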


This is in fact true integrally (see \cite[Corollary 2]{GPorHO}), with the same
coefficient (we can of course choose the definition of the higher order Whitehead
products so that the sign is always positive). For \w[,]{n=1} this is just
\wref[.]{eqtwoeta}

\begin{example}\label{egcpn}
For \w[,]{n=4} we have \w{\bW\sb{0}=\bigvee\sb{i=1}\sp{4}\bS{2}\lo{i}}
(with fundamental classes \w[)]{\iottl{2}{i}\in\pi\sb{2}\bS{2}\lo{i}} and
\w[,]{\oW{1}=\bigvee\sb{1\leq i<j\leq 4}\bS{3}\lo{i,j}}
with attaching maps \w{d\sp{W}\sb{0}(\iotl{3}{i,j})=[\iottl{2}{i},\iottl{2}{j}]} on
\w[,]{\bS{3}\lo{i,j}} as in Example \ref{eghwp}, with
\w[,]{f\sb{0}(\iottl{2}{i})=\iot{2}} and
\w[.]{f\sb{1}(\iotl{3}{i,j})=2\iot{3}}

Similarly, \w[,]{\oW{2}=\bigvee\sb{1\leq i<j<k\leq 4}\bS{4}\lo{i,j,k}} with
\begin{myeq}\label{eqtriplewps}
  d\sp{W}\sb{0}(\iotl{4}{i,j,k})~=~
  -\left([\iotl{3}{i,j},s\sb{0}\iottl{2}{k}]+[\iotl{3}{i,k},s\sb{0}\iottl{2}{j}]+
[\iotl{3}{j,k},s\sb{0}\iottl{2}{i}]\right)
\end{myeq}
\noindent in \w[,]{\pi\sb{4}\bW\sb{1}} by \wref{eqtriplewp} or \wref[.]{eqglobal}

On the other hand, \w{d\sp{V}\sb{0}(\iot{4})=\gamm{2}=[\iot{3},s\sb{0}\iot{2}]} by
\wref{eqgamman} with \w[,]{f\sb{2}(\iotl{4}{i,j,k})=-6\iot{4}} and indeed:
$$
f\sb{1}\circ d\sb{0}\sp{W}(\iotl{4}{i,j,k})~=~-6[\iot{3},s\sb{0}\iot{2}]~=~
d\sb{0}\sp{V}\circ f\sb{2}(\iotl{4}{i,j,k})~.
$$

By \wref{eqattmap} (with global sign \w[),]{(-1)\sp{4}} the fourth order Whitehead
product is represented in \w{\pi\sb{5}\bW\sb{2}} by
\begin{myeq}\label{eqquadruplwp}
\begin{split}
%
\phi\sb{\vS}=&[\iotl{4}{i,j,k},s\sb{1}s\sb{0}\iottl{2}{\ell}]
+[\iotl{4}{i,j,\ell},s\sb{1}s\sb{0}\iottl{2}{k}]
+[\iotl{4}{i,k,\ell},s\sb{1}s\sb{0}\iottl{2}{j}]
+[\iotl{4}{j,k,\ell},s\sb{1}s\sb{0}\iottl{2}{i}]\\
%
&+[s\sb{0}\iotl{3}{i,j},\,s\sb{1}\iotl{3}{k,\ell}]
-[s\sb{1}\iotl{3}{i,j},\,s\sb{0}\iotl{3}{k,\ell}]
+[s\sb{0}\iotl{3}{i,k},\,s\sb{1}\iotl{3}{j,\ell}]
-[s\sb{1}\iotl{3}{i,k},\,s\sb{0}\iotl{3}{j,\ell}]\\
&\hs +[s\sb{0}\iotl{3}{i,\ell},\,s\sb{1}\iotl{3}{j,k}]
-[s\sb{1}\iotl{3}{i,\ell},\,s\sb{0}\iotl{3}{j,k}]~\in~\pi\sb{5}\bW\sb{2}
\end{split}
\end{myeq}
\noindent (compare \wref[),]{eqquadruplewp} while
\w{\gamm{3}=[\iot{4},s\sb{1}s\sb{0}\iot{2}]-[s\sb{0}\iot{3},s\sb{1}\iot{3}]}
by \cite[(8.9)]{BBSenHS}, and since by \wref{eqanticomm} we have
\w[,]{[s\sb{1}\iot{3},s\sb{0}\iot{3}]=-[s\sb{0}\iot{3},s\sb{1}\iot{3}]}
altogether:
\begin{myeq}\label{eqftwo}
f\sb{2}(\phi\sb{\vS})~=~\left(4\cdot[-6\iot{4},s\sb{1}s\sb{0}\iot{2}]
+6[2s\sb{0}\iot{3},2s\sb{1}\iot{3}]\right)~=~-24\cdot\gamm{3}~.
\end{myeq}
\end{example}

\end{document}